\newcommand{\mc}{\mathcal}
\newcommand{\mb}{\mathbb}
\newcommand{\E}{\mathbb{E}}
\newcommand{\ti}{\widetilde}
\newcommand{\ol}{\overline}
\newcommand{\eps}{\varepsilon}
\newcommand{\lo}{\langle}
\newcommand{\ro}{\rangle}
\newcommand{\R}{\mathbb{R}}
\newcommand{\tr}{{\text{\tiny{\textsf{T}}}}}
\DeclareMathOperator*\sgn{sgn}
\renewcommand{\phi}{\varphi}
\renewcommand{\epsilon}{\eps}
\theoremstyle{definition}
\newtheorem{defn}{Definition}[section]
\newtheorem{rmk}[defn]{Remark}
\theoremstyle{plain}
\newtheorem{lem}[defn]{Lemma}
\newtheorem{thm}[defn]{Theorem}
\newtheorem{cor}[defn]{Corollary}
\newtheorem{ass}[defn]{Assumption}
\newtheorem{prop}[defn]{Proposition}
\numberwithin{equation}{section}
\begin{document}
\begin{frontmatter}

\title{\ \ \ \ \ \ \ \ \ \ BSDEs in Utility Maximization with\ \ \ \ \ \ \ \ \ \  \hbox{} BMO Market Price of Risk}
\runtitle{BSDEs with BMO market price of risk}

\author{\fnms{Christoph} \snm{Frei\thanksref{t1},} \ead[label=e1]{cfrei@ualberta.ca}}
\address{Christoph Frei\\ Mathematical and Statistical Sciences\\
University of Alberta \\ Edmonton AB T6G 2G1\\ Canada\\
\printead{e1}}
\author{\fnms{\hspace{-0.25cm}Markus} \snm{Mocha\thanksref{t2}\hspace{-0.175cm}
}\ead[label=e2]{mocha@math.hu-berlin.de}}
\address{Markus Mocha\\ Institut f\"{u}r Mathematik\\
Humboldt-Universit\"{a}t zu Berlin \\ Unter den Linden 6, 10099 Berlin \\ Germany\\
\printead{e2}}
\and
\author{\fnms{Nicholas}  \snm{Westray}\corref{}\ead[label=e4]{n.westray@imperial.ac.uk}}
\address{Nicholas Westray\\ Department of Mathematics\\ Imperial College \\ London SW7 2AZ \\ United Kingdom\\
\printead{e4}}
\affiliation{University of Alberta, Humboldt-Universit\"{a}t zu Berlin and Imperial College London}
\thankstext{t1}{C. Frei gratefully acknowledges financial support by the Natural Sciences and Engineering Research Council of Canada through
grant 402585.}
\thankstext{t2}{Corresponding author: \texttt{mocha@math.hu-berlin.de} \hspace{15cm}  a
\quad We thank two anonymous referees for very helpful comments, which enabled us to improve the~paper. }
\runauthor{C. Frei, M. Mocha and N. Westray}

\begin{keyword}[class=MSC]
\kwd{60H10}
\kwd{91G80}
\end{keyword}

\begin{keyword}
\kwd{Quadratic BSDEs}
\kwd{BMO Market Price of Risk}
\kwd{Power Utility Maximization}
\kwd{Dynamic Exponential Moments}
\end{keyword}

\begin{abstract}
This article studies quadratic semimartingale BSDEs arising in power utility maximization when the market price of risk is of BMO type. In a Brownian setting we provide a necessary and sufficient condition for the existence of a solution but show that uniqueness fails to hold in the sense that there exists a continuum of distinct square-integrable solutions. This feature occurs since, contrary to the classical It\^{o} representation theorem, a representation of random variables in terms of stochastic exponentials is not unique. We study in detail when the BSDE has a bounded solution and derive a new dynamic exponential moments condition which is shown to be the minimal sufficient condition in a general filtration. The main results are complemented by several interesting examples which illustrate their sharpness as well as important properties of the utility maximization BSDE.\vspace{-2pt}
\end{abstract}
\end{frontmatter}


\section{Introduction}
In this article we study quadratic semimartingale BSDEs that arise in power utility maximization. More precisely, we consider the optimal investment problem over a finite time horizon $[0,T]$ for an agent whose goal is to maximize the expected utility of terminal wealth. Such a problem is classical in mathematical finance and dates back to Merton \cite{Me69}. For general utility functions (not necessarily power) the main solution technique 
is convex duality, see Karatzas and Shreve \cite{KS98}, Kramkov and Schachermayer \cite{KS99} as well as the survey article of Schachermayer \cite{S04} which gives an excellent overview of the ideas involved as well as many further references.

A second approach to tackling the above problem is via BSDEs, using the factorization property of the value process when the utility function is of power type. This allows one to apply the martingale optimality principle and, as shown in Hu, Imkeller and M\"{u}ller \cite{HIM05}, to describe the value process and optimal trading strategy completely via a BSDE. Their article can be regarded as an extension of earlier work by Rouge and El Karoui \cite{REK00} as well as Sekine \cite{Se06} and relies on existence and uniqueness results for quadratic BSDEs first proved in Kobylanski \cite{K00}. Those existence results were subsequently extended by Morlais \cite{Mo09}, Briand and Hu \cite{BH06, BH08}, Delbaen, Hu and Richou \cite{DHR09} as well as Barrieu and El Karoui \cite{BEK11}. In particular, the assumption that the mean-variance tradeoff process be bounded has been weakened to it having certain exponential moments, see Mocha and Westray \cite{MW10,MW10a}.

As previously stated, it is the BSDE approach to tackling the utility maximization problem which is studied in detail in the present paper. The key idea is to derive a BSDE whose solution provides a candidate optimal wealth process together with a candidate optimal strategy. Then a verification argument is applied, showing that these are in fact optimal, see Nutz \cite{Nu209}. This latter step is the difficult part and typically requires extra regularity of the BSDE solution which is guaranteed by the boundedness or existence of exponential moments of the mean-variance tradeoff. In particular, when one can show the existence of a bounded solution, verification is feasible. This is due to the fact that the martingale part of the corresponding BSDE is then known to be a BMO martingale. Motivated by the ease of verification given a bounded solution the main aim of this paper is to quantify, in terms of assumptions on the mean-variance tradeoff process, when one can expect a bounded solution. This natural question justifies the present study.

When the market is continuous (as assumed in this article) one can write the mean-variance tradeoff as $\lo \lambda \cdot M, \lambda \cdot M\ro_T$ for a continuous local martingale $M$ and predictable $M$-integrable process $\lambda$. The assumption that the mean-variance 
tradeoff process be bounded or have all exponential moments implies that the minimal martingale measure $\mc{E}(-\lambda\cdot M)_T$ is a true probability measure. In particular, the set of equivalent martingale measures is nonempty, so that there is no arbitrage in the sense of NFLVR, see Delbaen and Schachermayer \cite{DS98}. If the local martingale $\lambda\cdot M$ is instead assumed to be only a BMO martingale then from Kazamaki \cite{Ka94} the minimal martingale measure is again a true probability measure and NFLVR holds. In this case the mean-variance tradeoff now need not be bounded or have all exponential moments. A secondary objective of this paper is to study what happens to the solution of the BSDE in this situation. As discussed, such a condition on $\lambda\cdot M$ arises naturally from a no-arbitrage point of view, additionally however there is a known relation between boundedness of solutions to quadratic BSDEs and BMO martingales so that this question is also interesting from a mathematical standpoint.

The present article has three main contributions. Firstly in a Brownian setting, 
we show that the BSDE admits a continuum of distinct solutions with square-integrable martingale parts. This result provides square-integrable counterexamples to uniqueness of BSDE solutions. The spirit of our counterexamples is similar to that of Ankirchner, Imkeller and Popier \cite{AIP08} Section 2.2 except that we consider BSDEs related to the utility maximization problem. Our result stems from the fact that contrary to the classical It\^{o} representation formula with square-integrable integrands, a ``multiplicative'' $L^2$-analogue in terms of stochastic exponentials is not unique, see Lemma \ref{itoexplem}.

The second contribution is a thorough investigation of when the BSDE admits a bounded solution. If the investor's relative risk aversion is greater than one and $\lambda\cdot M$ is a BMO martingale, this is automatically satisfied. For a risk aversion smaller than one the picture is rather different and we provide an example to show that even when the mean-variance tradeoff has all exponential moments and the process $\lambda\cdot M$ is a BMO martingale, the solution to the utility maximization BSDE need not be bounded. Building on this example our third and most important result is Theorem \ref{dem}, which shows how to combine the BMO and exponential moment conditions so as to find the minimal condition which guarantees, in a general filtration, that the BSDE admits a bounded solution. We thus fully characterize the boundedness of solutions to the quadratic BSDE arising in power utility maximization. We mention that the limiting case of risk aversion equal to one, i.e. the case of logarithmic utility, is covered by our results.

The paper is organized as follows. In the next section we establish the link between the utility maximization problem and BSDEs for an unbounded mean-variance tradeoff. Then we analyze the questions of existence and uniqueness of BSDE solutions and in Section \ref{sec5} turn our attention to the interplay between boundedness of solutions and the BMO property of $\lambda\cdot M$. In Section \ref{SecCountBound} we develop some related counterexamples and then provide the characterization of boundedness in Section \ref{sec6}. In an additional appendix we collect some background material on quadratic continuous semimartingale BSDEs.


\section{Power Utility Maximization and Quadratic BSDEs}\label{SecRelation}
Throughout this article we work on a filtered probability space $(\Omega,\mc{F},(\mc{F}_t)_{0\leq t\leq T},\mb{P})$ satisfying the usual conditions of right-continuity and completeness. We assume that the time horizon $T$ is a finite number in $(0,\infty)$ and that $\mc{F}_0$ is the completion of the trivial $\sigma$-algebra. In addition all semimartingales are assumed to be equal to their c\`{a}dl\`{a}g modification. In a first step, we assume that the filtration is \emph{continuous} in the sense that all local martingales are continuous. This condition is relaxed in Section \ref{sec6} where we provide the main characterization result for a general filtration.

There is a market consisting of one bond, assumed constant, and $d$ stocks with price process $S=(S^1,\ldots,S^d)^\tr\!$, which we assume to have dynamics
\begin{equation*}
dS_t=\text{Diag}(S_t)\,\bigl(dM_t+ \,d\lo M,M \ro_t\lambda_t\bigr),
\end{equation*}
where $M=(M^1,\ldots,M^d)^\tr$ is a $d$-dimensional continuous local martingale with $M_0=0$, $\lambda$ is a $d$-dimensional predictable process, the \emph{market price of risk}, satisfying
\begin{equation*}
\int_0^T \lambda_t^\tr \,d\lo M,M \ro_t\lambda_t <+\infty\quad\text{ a.s.}
\end{equation*}
and $\text{Diag}(S)$ denotes the $d\times d$ diagonal matrix whose elements are taken from $S$.

We consider an investor trading in the above market according to an admissible investment strategy $\nu$. A predictable $d$-dimensional process $\nu$ is called \emph{admissible} if it is $M$-integrable, i.e. \mbox{$\int_0^T\nu^\tr_t \,d\lo M,M \ro_t\nu_t <+\infty$} a.s. and we write $\mc{A}$ for the family of such investment strategies $\nu$. Observe that each component $\nu^i$ represents the \emph{proportion of wealth} invested in the $i$th stock $S^i$, $i=1,\ldots,d$. In particular, for some initial capital $x>0$ and an admissible strategy $\nu$, the associated \emph{wealth process} $X^{x,\nu}$ evolves as follows,
\begin{equation*}
X^{x,\nu}:=x\,\mc{E}(\nu\cdot M +\nu\cdot \lo M,M\ro\lambda), 
\end{equation*}
where $\mc{E}$ denotes the stochastic exponential. The family of all such wealth processes is denoted by $\mc{X}(x)$.

Our investor has preferences modelled by a power utility function $U$, \[U(x)=\frac{\,x^p}{p},\quad\text{ where }p\in(-\infty,0)\cup(0,1).\]
Starting with initial capital $x>0$, they aim to maximize the expected utility of terminal wealth. This leads to the following primal optimization problem, 
\begin{equation}\label{primalproblem}
u(x):=\sup_{\nu\in\mc{A}}\,\E\!\left[U\bigl(X^{x,\nu}_T\bigr)\right].
\end{equation}
Related to the above primal problem is a dual problem which we now describe. For $y>0$ we introduce the set
\begin{equation*}
\mc{Y}(y):=\left\{Y\geq 0\,|\,Y_0=y \text{ and } XY \text{
is a supermartingale for all }X\in\mc{X}(1)\right\}\!,
\end{equation*}
as well as the minimization problem
\begin{equation*} 
\ti{u}(y):=\inf_{Y\in\mc{Y}(y)}\E\!\left[\ti{U}\big(Y_T\big)\right],
\end{equation*}
where $\ti{U}$ is the \emph{conjugate} (or \emph{dual}) of $U$ given by $\ti{U}(y)=-\frac{\,y^q}{q}$, $y>0$, where $q:=\frac{p}{p-1}$ is the \emph{dual exponent} to $p$. There is a bijection between $p$ and $q$ so that in what follows we often state the results for $q$ rather than for $p$. 

It is shown in Kramkov and Schachermayer \cite{KS99,KS03} (among others) that for general utility functions (not necessarily power) the following assumption is the weakest possible for well posedness of the market model and the utility maximization problem.
\begin{ass}\label{ass2}\mbox{}
\begin{enumerate}
\item The set $\mc{M}^e(S)$ of equivalent local martingale measures for $S$ is non-empty.
\item If $p>0$, there is an $x>0$ such that $u(x)<+\infty.$
\end{enumerate}
\end{ass}
Summarizing the results of \cite{KS99} we then have that there exists a strategy $\hat{\nu}\in\,\mc{A}$, independent of $x>0$, which is \emph{optimal} for the primal problem. In particular, for any other optimal strategy $\bar{\nu}\in\,\mc{A}$ and $x>0$ we have that $\hat{X}^x:=X^{x,\hat{\nu}}$ and $X^{x,\bar{\nu}}$ are indistinguishable. On the dual side, given $y>0$, there exists a $\hat{Y}^y\in\mc{Y}(y)$ which is optimal for the dual problem and unique up to indistinguishability. Finally, the functions $u$ and $\ti{u}$ are continuously differentiable and conjugate and if $y=u'(x)$ then $\hat{Y}_T = U'(\hat{X}_T)$ and the process $\hat{X}\hat{Y}$ is a martingale on $[0,T]$, where we omit writing the dependence on the initial values.
\begin{rmk}
In \cite{KS99} the authors work in the additive formulation where strategies represent the number of shares and wealth remains (only) nonnegative. However, for power utility maximization, the additive formulation and the setting here are equivalent. We refer the reader to \cite{MW10a} for further details.
\end{rmk}
We also state here a representation of the dual optimizer shown in \cite{LZ07} when $S$ is one-dimensional and extended in \cite{MW10a} to the current framework. Using the continuity of the filtration, there exists a continuous local martingale $\hat{N}$ which is orthogonal to $M$, i.e. $\lo\hat{N},M^i\ro\equiv 0$ for all $i=1,\ldots,d$, and such that $\hat{Y}=y\,\mc{E}(-\lambda\cdot M + \hat{N})$.

Finally, we recall the BSDE satisfied by the so-called \emph{opportunity process} from \cite{Nu109}, more precisely by its log-transform. We start with the solutions $\hat{X}$ and $\hat{Y}$ to the above primal and dual problem (when $y=u'(x)$) and derive the BSDE satisfied by the following process
\begin{equation*}
\hat{\Psi}:=\log\!\left(\frac{\hat{Y}}{U'(\hat{X})}\right).
\end{equation*}
The logic is now very similar to the procedure in \cite{MS05}, we apriori obtained the existence of the object $\hat{\Psi}$ of interest. Imposing a suitable assumption we show that $\hat{\Psi}$ lies in a certain space in which solutions to (a special type of) quadratic semimartingale BSDE
are unique. This approach of using BSDE comparison principles in utility maximization may be found in \cite{HIM05} and \cite{Mo09}. We observe that in these references the \emph{mean-variance tradeoff} $\lo \lambda\cdot M,\lambda\cdot M\ro_T$ is bounded. In what follows we extend their reasoning to the unbounded case under exponential moments which was thoroughly investigated in \cite{MW10a} where the corresponding assumption is
\begin{ass}\label{ass_expmom}
For all $\varrho>0$ we have that 
\[\E\Bigl[\exp\bigl(\varrho\,\lo \lambda\cdot M,\lambda\cdot M\ro_T\bigr)\Bigr]<+\infty,\]
i.e., the mean-variance tradeoff $\lo \lambda\cdot M,\lambda\cdot M\ro_T$ has exponential moments of all orders.
\end{ass}
The preceding assumption is compatible with Assumption \ref{ass2} in the following sense.
\begin{lem}[\cite{MW10a} Proposition 2.8]\label{Ass2341}
Assumption \ref{ass_expmom} implies Assumption \ref{ass2}, more precisely,
\begin{enumerate}
\item The process $Y^\lambda:=\mc{E}(-\lambda\cdot M)$ is a martingale on
$[0,T]$, hence defines an equivalent local martingale measure for
$S$. \item The function $u$ is finite on all of $(0,+\infty)$.
\end{enumerate}
\end{lem}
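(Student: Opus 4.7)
The plan is to prove the two parts separately, relying on Assumption \ref{ass_expmom} to provide all the integrability that is needed.

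For (i), the natural tool is Novikov's criterion. Since Assumption \ref{ass_expmom} gives $\E[\exp(\varrho\,\lo \lambda\cdot M,\lambda\cdot M\ro_T)]<+\infty$ for every $\varrho>0$, in particular for $\varrho=1/2$, Novikov's condition is fulfilled. Hence $Y^\lambda=\mc{E}(-\lambda\cdot M)$ is a true $\mb{P}$-martingale on $[0,T]$, and $d\mb{Q}/d\mb{P}:=Y^\lambda_T$ defines an equivalent probability measure $\mb{Q}$. By Girsanov's theorem, for each $i=1,\ldots,d$ the process $M^i+\lo M^i,\lambda\cdot M\ro=M^i+\sum_{j}\int_0^{\cdot}\lambda^j_s\,d\lo M^i,M^j\ro_s$ is a $\mb{Q}$-local martingale, so the vector $M+\int_0^{\cdot}d\lo M,M\ro_s\lambda_s$ is a $\mb{Q}$-local martingale. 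Since $dS^i_t/S^i_t=dM^i_t+d\lo M^i,M\ro_t^\tr\lambda_t$ is the $i$-th component of this vector's increment, each $S^i$ is the stochastic exponential (times $S^i_0$) of a $\mb{Q}$-local martingale and is therefore itself a $\mb{Q}$-local martingale. Thus $\mb{Q}\in\mc{M}^e(S)$.

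For (ii), I would use convex duality combined with the Young inequality $U(x)\le\widetilde{U}(y)+xy$ for all $x,y>0$. When $p<0$, $u(x)\le 0$ trivially, so the interesting case is $p\in(0,1)$, in which $q=p/(p-1)<0$. Fix $x,y>0$ and $\nu\in\mc{A}$. Observe that $Y^\lambda\in\mc{Y}(1)$: indeed by part (i) and Bayes' rule, $X\in\mc{X}(1)$ becomes a nonnegative $\mb{Q}$-local martingale (hence a $\mb{Q}$-supermartingale), so $XY^\lambda$ is a $\mb{P}$-supermartingale with initial value $1$. Consequently $y\,\E[X^{x,\nu}_T Y^\lambda_T]\le xy$, and applying Young's inequality pointwise gives
\begin{equation*}
\E\!\left[U\bigl(X^{x,\nu}_T\bigr)\right]\;\le\;\E\!\left[\widetilde{U}\bigl(y Y^\lambda_T\bigr)\right]+xy\;=\;-\frac{y^q}{q}\,\E\!\left[\bigl(Y^\lambda_T\bigr)^q\right]+xy.
\end{equation*}

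The remaining task is to verify $\E[(Y^\lambda_T)^q]<+\infty$, and this is the step where Assumption \ref{ass_expmom} is truly used. The algebraic identity
\begin{equation*}
\mc{E}(-\lambda\cdot M)^q_T\;=\;\mc{E}(-q\lambda\cdot M)_T\,\exp\!\Bigl(\tfrac{q^2-q}{2}\lo\lambda\cdot M,\lambda\cdot M\ro_T\Bigr)
\end{equation*}
together with Cauchy--Schwarz bounds this by $\E[\mc{E}(-2q\lambda\cdot M)_T]^{1/2}\,\E[\exp((q^2-q)\lo\lambda\cdot M,\lambda\cdot M\ro_T)]^{1/2}$. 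The first factor is at most one by another application of Novikov (the required moment $\E[\exp(2q^2\lo\lambda\cdot M,\lambda\cdot M\ro_T)]$ is finite under Assumption \ref{ass_expmom}), and the second is finite directly from Assumption \ref{ass_expmom}. Taking the supremum over $\nu\in\mc{A}$ concludes $u(x)<+\infty$. The main technical obstacle is really only this last moment estimate; once one recognises the correct splitting of $(Y^\lambda_T)^q$ into a stochastic exponential times a deterministic function of the quadratic variation, everything else is bookkeeping from Girsanov and Novikov.
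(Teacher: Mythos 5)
Your proof is correct and follows essentially the route the paper attributes to \cite{MW10a} Proposition 2.8: Novikov and Girsanov for item (i), and Fenchel--Young duality together with a splitting of $\mc{E}(-\lambda\cdot M)_T^q$ into a stochastic exponential times an exponential of the quadratic variation for item (ii); the paper's version of this estimate (visible in the proof of Theorem \ref{dem}(i), which says it proceeds ``similarly to the proof of Lemma \ref{Ass2341}'') uses H\"{o}lder with optimized conjugate exponents where you use Cauchy--Schwarz, a distinction that only matters when one tracks the minimal exponential-moment order and is irrelevant here since Assumption \ref{ass_expmom} grants all orders. One bookkeeping slip: starting from your identity $\mc{E}(-\lambda\cdot M)_T^q=\mc{E}(-q\lambda\cdot M)_T\exp\bigl(\tfrac{q^2-q}{2}\lo\lambda\cdot M,\lambda\cdot M\ro_T\bigr)$, Cauchy--Schwarz produces the first factor $\E\bigl[\mc{E}(-q\lambda\cdot M)_T^2\bigr]^{1/2}$, not $\E\bigl[\mc{E}(-2q\lambda\cdot M)_T\bigr]^{1/2}$; to obtain the latter (which is $\leq 1$ already by the supermartingale property of a nonnegative local martingale, no Novikov needed) you should instead write $\mc{E}(-\lambda\cdot M)_T^q=\mc{E}(-2q\lambda\cdot M)_T^{1/2}\exp\bigl(\tfrac{2q^2-q}{2}\lo\lambda\cdot M,\lambda\cdot M\ro_T\bigr)$ and accept the exponent $2q^2-q$ rather than $q^2-q$ in the second factor --- still finite under Assumption \ref{ass_expmom}, so the conclusion is unaffected.
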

For completeness let us also include the case $p=0$, equivalently $q=0$, which corresponds to the case of logarithmic utility, i.e. $U(x)=\log(x)$ and $\ti{U}(y)=-\log(y)-1$, and for which the optimizers  
have a simple structure. Namely, $\hat{\nu}\equiv\lambda$ is the optimal strategy and $\hat{Y}^1\equiv\mc{E}(-\lambda\cdot M)$ is the dual optimizer. We also have $\hat{X}\equiv\hat{Y}^{-1}$ so that $\hat{\Psi}\equiv0$. We mention that the Assumption \ref{ass2} with item (ii) extended to $p=0$ is a sufficient condition for existence and uniqueness of the optimizers. In particular, Assumption \ref{ass_expmom} or the condition that $\lambda\cdot M$ be a BMO martingale, see below, are sufficient as can be deduced easily.\\ 

Before we discuss properties of the process $\hat{\Psi}$ we first fix some notation.
\begin{defn}
Let $\mathfrak{E}$ denote the space of all processes $\Upsilon$ on $[0,T]$ whose supremum $\Upsilon^*:=\sup_{0\leq t\leq T}|\Upsilon_t|$ has exponential moments of all orders, i.e. for all $\varrho>0$,
\[\E\left[\exp\left(\varrho\Upsilon^*\right)\right]<+\infty.\]
\end{defn}
We then recall from \cite{MW10a} that if Assumption \ref{ass_expmom} holds and if $(\hat{X},\hat{Y})$ is the solution to the primal and dual optimization problem, then $\hat{\Psi}\in\mathfrak{E}$. 
With regards to the derivation of the BSDE satisfied by $\hat{\Psi}$, we note that, using the formulae for $\hat{X}$ and $\hat{Y}$,
\begin{equation*}
\hat{\Psi}=\log\!\left(y\,\mc{E}(-\lambda\cdot M + \hat{N})\Big(x\,\mc{E}\left(\hat{\nu}\cdot M +\hat{\nu}\cdot \lo M,M \ro\lambda\right)\Big)^{1-p}\,\right).
\end{equation*}
After the change of variables \[\hat{Z}:=-\lambda+(1-p)\hat{\nu}\] a calculation shows that we have found a solution $(\hat{\Psi},\hat{Z},\hat{N})$ to the following quadratic semimartingale BSDE (written in the generic variables $(\Psi,Z,N)$),
\begin{multline}
d\Psi_t= Z_t^\tr\,dM_t + dN_t -\frac{1}{2}\,d\lo
N,N\ro_t
\label{BSDE}\\+\frac{q}{2}(Z_t+\lambda_t)^\tr\,d\lo
M,M\ro_t(Z_t+\lambda_t)-\frac{1}{2}\,Z_t^\tr\,d\lo
M,M\ro_tZ_t,\quad\Psi_T=0,
\end{multline}
where we refer to Definition \ref{DefnSol} for the notion of a \emph{solution} to the  BSDE \eqref{BSDE}. We summarize our findings in the following theorem, noting that it is uniqueness which requires the stronger Assumption \ref{ass_expmom}, existence is guaranteed via Assumption \ref{ass2}. 
\begin{thm}
\label{thmuniq} Let Assumption \ref{ass_expmom} hold and $(\hat{X},\hat{Y})$ be the solution pair to the primal/dual optimization problem i.e. for $x>0$
\begin{gather*}
\hat{X}=x\,\mc{E}(\hat{\nu}\cdot M +\hat{\nu}\cdot \lo M,M
\ro\lambda)\text{ and }\hat{Y}=u'(x)\,\mc{E}(-\lambda\cdot M +
\hat{N}).
\end{gather*}
Setting
\begin{equation*}
\hat{\Psi}:=\log\Bigl({\hat{Y}}\!\big/\,{U'(\hat{X})}\Bigr)\quad\text{ and }\quad
\hat{Z}:=-\lambda+(1-p)\hat{\nu},
\end{equation*} then
\begin{enumerate}
\item The triple $(\hat{\Psi},\hat{Z},\hat{N})$ is the unique solution $({\Psi},{Z},{N})$ to the BSDE \eqref{BSDE} where $\Psi\in\mathfrak{E}$ and $Z\cdot M$ and $N$ are two square-integrable martingales.
\item In terms of the BSDE we may write $\hat{Y}$ as
\begin{gather*}
\hat{Y}=\exp(\hat{\Psi})\,U'(\hat{X})=e^{\hat{\Psi}_0}x^{p-1}\,\mc{E}(-\lambda\cdot
M + \hat{N})\in\mc{Y}\bigl(c_p\,x^{p-1}\bigr)
\end{gather*}
with $c_p:=\exp\bigl(\hat{\Psi}_0\bigr)$, a.s.
\item The process $\mc{E}\Bigl(\bigl[(1-q)\hat{Z}-q\lambda\bigr]\cdot M+\hat{N}\Bigr)$ is a martingale on $[0,T]$.
\end{enumerate}
\end{thm}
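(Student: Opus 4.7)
The plan is to treat existence, uniqueness, the representation formula, and the martingale property as four separate tasks, the second being by far the hardest.

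\textbf{Existence in (i).} The construction preceding the theorem already does most of the work: starting from the Kramkov--Schachermayer optimizers $(\hat{X},\hat{Y})$ together with the representation $\hat{Y}=u'(x)\,\mc{E}(-\lambda\cdot M+\hat{N})$, a direct application of It\^o's formula / Yor's product formula to $\hat{\Psi}=\log(\hat{Y}/U'(\hat{X}))$ with the change of variables $\hat{Z}=-\lambda+(1-p)\hat{\nu}$ produces exactly the dynamics \eqref{BSDE}. That $\hat{\Psi}\in\mathfrak{E}$ is quoted from \cite{MW10a}. To complete the existence claim I would verify that $\hat{Z}\cdot M$ and $\hat{N}$ are square-integrable martingales: isolate these martingale parts in the BSDE, bound the quadratic integrals using Assumption \ref{ass_expmom} and the exponential integrability of $\hat{\Psi}^*$, and apply BDG/H\"older, all of which is routine given the machinery in the appendix.

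\textbf{Uniqueness in (i).} This is the main obstacle. Given two candidate solutions $(\Psi^i,Z^i,N^i)$, $i=1,2$, in the stated class, I would difference them and write the equation for $\Delta\Psi=\Psi^1-\Psi^2$; the nonlinearities involving $(Z^i+\lambda)^\tr\,d\lo M,M\ro(Z^i+\lambda)$ and $(Z^i)^\tr\,d\lo M,M\ro Z^i$ can be linearized by the standard ``secant'' trick, producing drift coefficients that are linear combinations of $Z^1+Z^2+2\lambda$ and $Z^1+Z^2$ against $d\lo M,M\ro$. The resulting linear BSDE is then handled by a Girsanov change of measure; the key point is to show that the associated density is a true martingale, which should follow from combining the BMO/exponential-moment structure of $\lambda\cdot M$ (Assumption \ref{ass_expmom}) with the square integrability of $Z^i\cdot M$ and $N^i$ and the $\mathfrak{E}$-integrability of $\Psi^i$. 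Under the new measure $\Delta\Psi$ solves a linear equation with zero terminal value and $\mathfrak{E}$-type integrability sufficient to make the driving martingales true martingales, so taking conditional expectation gives $\Delta\Psi\equiv 0$; the equality $Z^1=Z^2$ and $N^1=N^2$ then follows from orthogonality of $M$ and $N$ and uniqueness of continuous semimartingale decompositions.

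\textbf{Representation (ii).} This is essentially bookkeeping. By definition $\hat{Y}=e^{\hat{\Psi}}U'(\hat{X})$, and evaluating at $t=0$ together with $\hat{X}_0=x$ and $\hat{Y}_0=u'(x)$ gives $u'(x)=e^{\hat{\Psi}_0}x^{p-1}=c_p x^{p-1}$. Substituting this into $\hat{Y}=u'(x)\,\mc{E}(-\lambda\cdot M+\hat{N})$ produces the displayed formula, and the inclusion $\hat{Y}\in\mc{Y}(c_p x^{p-1})$ is inherited from $\hat{Y}\in\mc{Y}(u'(x))$, which is part of the Kramkov--Schachermayer theory.

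\textbf{Martingale property (iii).} A short algebraic computation using $q=p/(p-1)$ and $\hat{Z}=-\lambda+(1-p)\hat{\nu}$ gives $(1-q)\hat{Z}-q\lambda=\hat{\nu}-\lambda$. Applying Yor's formula to $\hat{X}\hat{Y}$, using that $\hat{N}$ is orthogonal to $M$ so that $\lo\hat{\nu}\cdot M+\hat{\nu}\cdot\lo M,M\ro\lambda,\,-\lambda\cdot M+\hat{N}\ro=-\hat{\nu}\cdot\lo M,M\ro\lambda$ cancels the drift, yields
\begin{equation*}
\hat{X}\hat{Y}=xu'(x)\,\mc{E}\bigl((\hat{\nu}-\lambda)\cdot M+\hat{N}\bigr)=xu'(x)\,\mc{E}\bigl([(1-q)\hat{Z}-q\lambda]\cdot M+\hat{N}\bigr).
\end{equation*}
Since $y=u'(x)$, the Kramkov--Schachermayer duality guarantees that $\hat{X}\hat{Y}$ is a true martingale on $[0,T]$, and the claim follows.

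The hard part will be step two: establishing that the Girsanov density is a genuine martingale and that all stochastic integrals appearing after the change of measure are true (not merely local) martingales in the class $\mathfrak{E}\times L^2$, so that taking expectations is legitimate.
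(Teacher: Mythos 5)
Your treatment of existence in (i), of the representation (ii), and of the martingale property (iii) matches the paper's proof: the paper also obtains existence and the square-integrability of the martingale parts by appealing to the appendix result (Theorem \ref{thmMW10}, i.e.\ the quoted results of \cite{MW10}), and item (iii) is proved there exactly as you do, via the identity $(1-q)\hat{Z}-q\lambda=\hat{\nu}-\lambda$, Yor's formula giving $\hat{X}\hat{Y}=xu'(x)\,\mc{E}\bigl([(1-q)\hat{Z}-q\lambda]\cdot M+\hat{N}\bigr)$, and the Kramkov--Schachermayer fact that $\hat{X}\hat{Y}$ is a martingale when $y=u'(x)$.

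The genuine gap is in your uniqueness argument. You linearize the quadratic terms and propose to remove the resulting drift by a Girsanov change of measure, asserting that the density ``should'' be a true martingale because of Assumption \ref{ass_expmom} together with the square-integrability of $Z^i\cdot M$ and $N^i$ and $\Psi^i\in\mathfrak{E}$. This cannot work as stated: the linearized coefficient is built from $Z^1+Z^2$ (and $N^1+N^2$), about which you only know square-integrability, and square-integrability of the integrand does \emph{not} make a stochastic exponential a true martingale --- this failure is precisely the mechanism behind Lemma \ref{itoexplem} and Theorem \ref{thm_continuum}, which exhibit a \emph{continuum} of distinct solutions to \eqref{BSDE} with square-integrable martingale parts. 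Consequently no uniqueness proof can go through using only the square-integrability class; the condition $\Psi\in\mathfrak{E}$ must enter quantitatively, and your sketch does not explain how it would rescue the Girsanov density (it does not: exponential moments of $\Psi^*$ give no control on $\mc{E}\bigl(\tfrac{q}{2}(Z^1+Z^2+2\lambda)\cdot M-\cdots\bigr)$). The paper's route, via Theorem \ref{thmMW10}(iii) (i.e.\ \cite{MW10} Corollary 4.2), instead exploits the convexity of the driver (Lemma \ref{PropF}(iv)) through a $\theta$-difference comparison of $\theta\Psi^1-\Psi^2$, combined with an exponential transform and localization in which the $\mathfrak{E}$-bounds on both $\Psi^1$ and $\Psi^2$ and the exponential moments of the mean-variance tradeoff are used to pass to the limit $\theta\uparrow1$. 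Replacing your Girsanov step by this convexity/$\theta$-difference argument (or simply citing the appendix theorem, as the paper does) is necessary to close the proof.
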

\begin{proof}
The content of item (i) follows from Theorem \ref{thmMW10} in the Appendix which summarizes the main results on quadratic semimartingale BSDEs under an exponential moments condition. A calculation yields the alternative formula for $\hat{Y}$ in item (ii) and the relation \[e^{\hat{\Psi}_0}x^{p}\,\mc{E}\Bigl(\bigl[(1-q)\hat{Z}-q\lambda\bigr]\cdot M+\hat{N}\Bigr) \equiv\hat{X}\hat{Y}\] gives the remaining assertion in item (iii). 
\end{proof}

The statement of the above theorem is essentially known. In \cite{HIM05} and \cite{Mo09} the boundedness of the mean-variance tradeoff is used to ensure uniqueness, in \cite{MW10a} this argument is extended to the unbounded case with exponential moments. Building on \cite{MT03,MT08}, the article \cite{Nu209} shows that in a general setting the opportunity process $\exp(\hat{\Psi})$ satisfies a BSDE which reduces to \eqref{BSDE} under the additional assumption of continuity of the filtration. In particular, $\exp(\hat{\Psi})$ is identified there as the \emph{minimal} solution to the corresponding BSDE. 

Having identified candidate optimizers from the BSDE, a difficult task is then verification, i.e. showing that a solution to the BSDE indeed provides the primal and dual optimizers. A sufficient condition is that $\mc{E}\Bigl(\bigl[(1-q)Z-q\lambda\bigr]\cdot M+N\Bigr)$ is a martingale as can be derived from \cite{Nu209}, see also Proposition \ref{ExplSolW} below. However, given a solution $(\Psi,Z,N)$ to the BSDE \eqref{BSDE}, this condition need not be satisfied, hence a solution to the BSDE \eqref{BSDE} need \emph{not} yield the optimizers even when $Z$ and $N$ are square-integrable, as demonstrated in Subsection \ref{SecNonOptim}. In conclusion, if a solution triple $(\Psi,Z,N)$ exists, then under some conditions it provides the solution $(\hat{X},\hat{Y})$ to the primal and dual problem and we have uniqueness to the BSDE within a certain class. This is in the spirit of \cite{MT08} Theorem 1.3.2 and \cite{Nu209} Theorem 5.2. However, above and in their theorems, the requirements imposed are not on the model. In contrast, our goal is to study which conditions on the model, i.e. on $\lambda$ and $M$, ensure such a BSDE characterization and the regularity of its solution in terms of a \emph{bounded} dynamic value process.

\section{Existence, Uniqueness and Optimality for Quadratic BSDEs}
From Theorem \ref{thmuniq} we see that under Assumption \ref{ass_expmom} one can connect the duality and BSDE approaches to solving the utility maximization problem. To analyze this connection in further detail, we consider in the present section a setting where the BSDE \eqref{BSDE} is explicitly solvable. Proposition \ref{ExplSolW} gives a sufficient condition for a solution to the BSDE \eqref{BSDE} to exist and provides an expression for $\hat{\Psi}$ in terms of $\mc{E}(-\lambda\cdot M)$. 

We go on to study uniqueness and show in Theorem \ref{thm_continuum} that in general there are many solutions with square-integrable martingale part. This is a consequence of the fact that a multiplicative representation of random variables as stochastic exponentials need not be unique, which is the content of Lemma \ref{itoexplem}. Finally, a main aim in the present article is to study the boundedness of solutions to the BSDE \eqref{BSDE} under the exponential moment and BMO conditions. This involves constructing counterexamples and some of the key techniques and ideas used for this are introduced in the current section. Therefore we restrict ourselves to the \emph{Brownian setting}, which we assume to be one-dimensional for notational simplicity. Let $W$ be a one-dimensional Brownian motion under $\mathbb{P}$ and $(\mathcal{F}_t)_{t\in[0,T]}$ its augmented natural filtration. In particular, $N\equiv 0$ is the unique local martingale orthogonal to $M=W$. A generalization of the following results to the multidimensional Brownian framework is left to the reader.

\subsection{Necessary Conditions for the Existence of Solutions to Quadratic BSDEs}\label{SecExpMom} 
\begin{prop}\label{ExplSolW}
For $q\in[0,1)$ the BSDE \eqref{BSDE} always admits a solution. For $q<0$ the BSDE \eqref{BSDE} admits a solution if and only if
\begin{equation}\label{ExpSolWCond}
\E\Bigl[\bigl(Y^\lambda_T\bigr)^q \Bigr]=\E\bigl[ \mc{E}(- \lambda \cdot W)_T^q \bigr] < + \infty.
\end{equation}
If there exists a solution, there is a unique solution $(\hat{\Psi},\hat{Z})$ with $\mc{E}\bigl([(1-q)\hat{Z}-q\lambda ] \cdot W\bigr)$ a martingale. Its first component is given by
\begin{equation}\label{hatPsiExpl}
\hat{\Psi}_t = \frac{1}{1-q}\, \log\Bigl( \E\bigl[ \mc{E}(- \lambda \cdot W)_{t,T}^q \,\big|\, \mathcal{F}_t\bigr]\Bigr),\quad t\in[0,T],\quad \text{a.s.}
\end{equation}
In particular, solving \eqref{BSDE} and setting $(X,Y)$ as suggested by Theorem \ref{thmuniq} gives the pair of primal and dual optimizers.
\end{prop}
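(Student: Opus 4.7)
My approach is to reduce \eqref{BSDE} (with $N\equiv 0$) to a positive martingale problem via an exponential transformation. The key calculation is that for \emph{any} solution $(\Psi,Z)$, applying It\^o's formula to
\[
\log\bigl(e^{(1-q)(\Psi_t-\Psi_0)}\,\mc{E}(-\lambda\cdot W)_t^q\bigr) = (1-q)(\Psi_t-\Psi_0) + q\,\log\mc{E}(-\lambda\cdot W)_t
\]
and substituting the BSDE drift together with $d\log\mc{E}(-\lambda\cdot W)_t=-\lambda_t\,dW_t-\tfrac12\lambda_t^2\,dt$ produces dynamics $[(1-q)Z_t-q\lambda_t]\,dW_t-\tfrac12[(1-q)Z_t-q\lambda_t]^2\,dt$. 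Hence
\[
e^{(1-q)(\Psi_t-\Psi_0)}\,\mc{E}(-\lambda\cdot W)_t^q = \mc{E}\bigl([(1-q)Z-q\lambda]\cdot W\bigr)_t.
\]
This single identity drives every assertion in the proposition. The right-hand side is a positive local, hence super-, martingale; evaluating at $t=T$ with $\Psi_T=0$ yields $\E[\mc{E}(-\lambda\cdot W)_T^q]\le e^{(1-q)\Psi_0}<+\infty$, which delivers the necessity of \eqref{ExpSolWCond} when $q<0$. For $q\in[0,1)$ this integrability is automatic: concavity of $x\mapsto x^q$ combined with the supermartingale property of $\mc{E}(-\lambda\cdot W)$ gives $\E[\mc{E}(-\lambda\cdot W)_T^q]\le 1$ by Jensen.

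For sufficiency I would invert the identity. Assuming \eqref{ExpSolWCond}, define the strictly positive closed martingale $M_t:=\E[\mc{E}(-\lambda\cdot W)_T^q\,|\,\mc{F}_t]$, invoke the Brownian martingale representation theorem to obtain a predictable $\phi$ with $dM_t=\phi_t\,dW_t$, and set
\[
\hat\Psi_t:=\frac{1}{1-q}\log\!\left(\frac{M_t}{\mc{E}(-\lambda\cdot W)_t^q}\right)\!, \qquad \hat Z_t:=\frac{1}{1-q}\!\left(\frac{\phi_t}{M_t}+q\lambda_t\right)\!.
\]
Reversing the It\^o calculation from the first paragraph then confirms that $(\hat\Psi,\hat Z)$ solves \eqref{BSDE}, with terminal value $\hat\Psi_T=0$ coming from $M_T=\mc{E}(-\lambda\cdot W)_T^q$; rearranging the definition gives precisely formula \eqref{hatPsiExpl}.

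For the uniqueness and optimizer claims, observe that by the first paragraph the requirement that $\mc{E}([(1-q)Z-q\lambda]\cdot W)$ be a true martingale is equivalent to $e^{(1-q)\Psi_\cdot}\mc{E}(-\lambda\cdot W)_\cdot^q$ being a martingale; conditioning on $\mc{F}_t$ and using $\Psi_T=0$ then forces $\Psi$ to coincide with \eqref{hatPsiExpl}, and $Z$ is determined up to null sets by the representation of $M$. The concluding assertion that setting $(X,Y)$ as in Theorem \ref{thmuniq} yields the primal and dual optimizers is precisely the verification criterion recalled immediately before the proposition, which, under the martingale property of $\mc{E}([(1-q)\hat Z-q\lambda]\cdot W)$, is supplied by \cite{Nu209}. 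The main obstacle I anticipate is pure bookkeeping in the It\^o computation: one must verify that the drift of the logarithmic transform collapses exactly to $-\tfrac12[(1-q)Z-q\lambda]^2$, a cancellation that relies on the specific form of the quadratic generator in \eqref{BSDE}.
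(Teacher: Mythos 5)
Your proposal is correct and follows essentially the same route as the paper: the same exponential transformation yielding the identity $e^{-(1-q)\Psi_t}\,\mc{E}(-\lambda\cdot W)_{t,T}^{q}=\mc{E}\bigl([(1-q)Z-q\lambda]\cdot W\bigr)_{t,T}$, the same auxiliary martingale (your $M_t=\E[\mc{E}(-\lambda\cdot W)_T^q\,|\,\mc{F}_t]$ coincides with the paper's $\ol{M}$), It\^{o} representation for sufficiency, the supermartingale property of the stochastic exponential for necessity when $q<0$, and Jensen's inequality for $q\in[0,1)$. The only differences are cosmetic (stating the identity on $[0,t]$ rather than $[t,T]$).
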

As a result, condition \eqref{ExpSolWCond} is sufficient for existence and uniqueness of the optimizers and we mention that it corresponds to condition (10) in \cite{KS99}. Hence, the utility maximization problem is well-defined even if NFLVR (\emph{no free lunch with vanishing risk}) does not hold. This is because FLVR strategies cannot be used beneficially by the CRRA-investor due to the requirement of having a positive wealth at any time.

\begin{proof}
Let us first show that the BSDE \eqref{BSDE} admits a solution if \eqref{ExpSolWCond} holds. Observe that from Jensen's inequality, for $q\in[0,1)$, \[\E\bigl[ \mc{E}(- \lambda \cdot W)_T^q \bigr] \leq \E\bigl[ \mc{E}(- \lambda \cdot W)_T \bigr]^q \leq 1,\] so that \eqref{ExpSolWCond} automatically holds in this case.
For $t\in[0,T]$ consider
\begin{equation}\label{auxM}
\ol{M}_{t} := \E\Biggl[\mc{E}(-q \lambda \cdot W)_T\exp\Biggl(\frac{q(q-1)}{2}\int_{0}^{T}  {\lambda}_{s}^{2}\,ds \Biggr)\Bigg|\, \mc{F}_{t} \Biggr].
\end{equation}
Since we have\[\E\Biggl[\mc{E}(-q \lambda \cdot W)_T\exp\Biggl(\frac{q(q-1)}{2} \int_{0}^{T} {\lambda}_{s}^{2}\,ds \Biggr)\Biggr]=\E\bigl[ \mc{E}(- \lambda \cdot W)_T^q \bigr] < + \infty,\] $\ol{M}$ is a positive martingale so that by It\^o's representation theorem there exists a predictable process $\ol{Z}$ with $\int_{0}^{T} \ol{Z}_{t}^{2}\, dt <+ \infty$ a.s. such that $ \frac{1}{\,\ol{M}}\cdot \ol{M}\equiv \ol{Z} \cdot {W}$. We set \[\hat{Z}:=\frac{\ol{Z}+q\lambda}{1-q}\] and $\hat{\Psi}$ as in \eqref{hatPsiExpl}. A calculation then shows that $(\hat{\Psi},\hat{Z})$ solves the BSDE \eqref{BSDE} with $\mc{E}\bigl([(1-q)\hat{Z}-q\lambda ] \cdot W\bigr)\equiv\mc{E}\bigl( \ol{Z} \cdot {W}\bigr) \equiv \tfrac{1}{\ol{M}_0}\,{\ol{M}}$ a martingale.

We now turn our attention to uniqueness. Let us assume that $({\Psi},{Z})$ is a solution to \eqref{BSDE} such that $\mc{E}\bigl( [(1-q){Z}-q\lambda] \cdot W\bigr)$ is a martingale. For $t\in[0,T]$ a calculation gives
\begin{align}\label{PsiZWlamExpl}
\exp\bigl(-(1-q)\Psi_t\bigr)&=\exp\bigl((1-q)({\Psi}_T-{\Psi}_t)\bigr)\\\notag&=\mc{E}\bigl( [(1-q){Z}-q\lambda] \cdot W\bigr)_{t,T}\,\mc{E}\bigl(- \lambda \cdot W\bigr)_{t,T}^{-q}\quad \text{a.s.}
\end{align}
so that we obtain
\[{\Psi}_t = \frac{1}{1-q}\, \log\Bigl( \E\bigl[ \mc{E}(- \lambda \cdot W)_{t,T}^q \,\big|\, \mathcal{F}_t\bigr]\Bigr)\quad \text{a.s.}\]
We derive that $\Psi$ and $\hat{\Psi}$ are indistinguishable due to continuity. From \eqref{PsiZWlamExpl} we then obtain that $\mc{E}\bigl( [(1-q){Z}-q\lambda] \cdot W\bigr)$ is uniquely determined, from which it follows that $\hat{Z} \cdot{W}\equiv Z\cdot W $.

Finally, we show that the condition \eqref{ExpSolWCond} is also necessary. Assume that a solution $(\Psi,Z)$ to \eqref{BSDE} exists but $\E\bigl[ \mc{E}(- \lambda \cdot W)_T^q \bigr] = + \infty$. Then, together with the supermartingale property of $\mc{E}\bigl( [(1-q){Z}-q\lambda] \cdot W\bigr)$, the equality \eqref{PsiZWlamExpl} shows that a.s.
\begin{align*}
\exp\bigl((1-q)\Psi_0\bigr)&\geq\E\Bigl[\mc{E}\bigl( [(1-q){Z}-q\lambda] \cdot W\bigr)_{T}\Bigr]\,\exp\bigl((1-q)\Psi_0\bigr)\\&=\E\Bigl[\mc{E}\bigl(- \lambda \cdot W\bigr)_{T}^{q}\Bigr]=+\infty,
\end{align*}
from which $\Psi_0=+\infty$ a.s. in contradiction to the existence of $\Psi$.
\end{proof}
 
We now provide an explicit market price of risk for which condition \eqref{ExpSolWCond} fails to hold, hence for which the BSDE \eqref{BSDE} has no solution. While similar examples have been provided in Delbaen and Tang \cite{DT10} Theorem 2.8 as well as Frei and dos Reis \cite{FdR10} Lemma A.1, we give the full construction as it will be used throughout.

\begin{prop}\label{propnosol}
For every $q < 0$ there exists $\lambda$ such that $\lambda \cdot W $ is a bounded martingale and $\E\bigl[\bigl(Y^\lambda_T\bigr)^q \bigr]=\E\bigl[ \mc{E}(- \lambda \cdot W)_T^q \bigr] = + \infty.$
\end{prop}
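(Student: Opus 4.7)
The idea is to look for $\lambda$ with $\lambda\cdot W$ bounded by some constant $a$ while $\int_0^T\lambda_s^2\,ds$ has an infinite $(-q/2)$-exponential moment. Since
\[
\bigl(Y^\lambda_T\bigr)^q=\mc{E}(-\lambda\cdot W)_T^q=\exp\bigl(-q(\lambda\cdot W)_T\bigr)\,\exp\!\left(-\tfrac{q}{2}\int_0^T\lambda_s^2\,ds\right),
\]
the first factor is bounded between positive constants (as $q<0$), so \eqref{ExpSolWCond} reduces to whether $\E\bigl[\exp\bigl(-\tfrac{q}{2}\int_0^T\lambda_s^2\,ds\bigr)\bigr]=+\infty$.

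To engineer both properties simultaneously I would use a time-change argument. Fix $a>0$ with $a^{2}\geq\pi^{2}/(-4q)$, set $\bar\lambda_t:=(T-t)^{-1/2}$ on $[0,T)$, and let $\bar M:=\bar\lambda\cdot W$. Then $\lo\bar M,\bar M\ro_t=\log\bigl(T/(T-t)\bigr)$ grows continuously from $0$ to $+\infty$, so by Dambis--Dubins--Schwarz $\bar M=B_{\lo\bar M,\bar M\ro}$ for a Brownian motion $B$. Consequently the stopping time $\tau_a:=\inf\{t\in[0,T):|\bar M_t|=a\}$ satisfies $\tau_a<T$ a.s., with $\lo\bar M,\bar M\ro_{\tau_a}=\sigma_a$, where $\sigma_a:=\inf\{s\geq 0:|B_s|=a\}$ is the first exit time of $B$ from $(-a,a)$. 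I then define $\lambda:=\bar\lambda\,\mathbf{1}_{[0,\tau_a]}$. Since $\int_0^T\lambda_s^2\,ds=\lo\bar M,\bar M\ro_{\tau_a}=\sigma_a<+\infty$ a.s., $\lambda$ is $W$-integrable, and $\lambda\cdot W=\bar M^{\tau_a}$ is a bounded martingale with $|\lambda\cdot W|\leq a$.

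To finish, I would invoke the classical Laplace transform of the Brownian exit time from a symmetric interval,
\[
\E\bigl[\exp(c\,\sigma_a)\bigr]=\sec\!\bigl(a\sqrt{2c}\bigr)\ \text{ for }\ 0<c<\tfrac{\pi^{2}}{8a^{2}},\qquad \E\bigl[\exp(c\,\sigma_a)\bigr]=+\infty\ \text{ for }\ c\geq\tfrac{\pi^{2}}{8a^{2}}.
\]
Taking $c=-q/2$, the choice of $a$ gives $c\geq\pi^{2}/(8a^{2})$, whence $\E\bigl[\exp(-\tfrac{q}{2}\sigma_a)\bigr]=+\infty$, and combining with the first paragraph yields $\E\bigl[(Y^\lambda_T)^q\bigr]=+\infty$. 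The only point requiring a moment's thought is the coupling of $a$ to $q$ so that the threshold $\pi^{2}/(8a^{2})$ lies at or below the critical value $-q/2$; this is always possible because the threshold can be made arbitrarily small by enlarging $a$, which is precisely what lets the construction cover every $q<0$.
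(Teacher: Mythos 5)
Your proposal is correct and follows essentially the same route as the paper: stop the integrand $(T-t)^{-1/2}$ at the exit time of its (time-changed) Brownian motion from a symmetric interval, note that the stochastic-integral factor of $\mc{E}(-\lambda\cdot W)_T^q$ is then bounded between positive constants, and conclude from the critical Laplace exponent $\pi^2/(8a^2)$ of the exit time that the quadratic-variation factor has infinite expectation. The only cosmetic differences are that you vary the barrier $a$ instead of rescaling $\lambda$ by $\pi/(2\sqrt{-q})$ with barrier fixed at $1$, and you invoke the classical $\sec$ formula via Dambis--Dubins--Schwarz where the paper cites Kazamaki's Lemma 1.3.
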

\begin{proof}
For $t\in[0,T]$ define
\begin{equation*} 
\lambda_t:=\frac{\pi}{2\sqrt{-q(T-t)}}\,\mathbf{1}_{]\!]T/2,\tau]\!]}(t,\cdot),
\end{equation*}
where $\tau$ is the stopping time \[ \tau := \inf\Bigg\{ t >\frac{T}{2}\,\Bigg|\, \Bigg| \int_{T/2}^{t} \frac{1}{\sqrt{T-s}}\,dW_{s}\Bigg|\geq1 \Bigg\}.\] Here, we define $\lambda$ from time $T/2$ onwards to be consistent with the construction in Subsection \ref{CountBound3}. For the present proof, we could equally well replace $T/2$ by $0$ in the definitions of $\lambda$ and $\tau$. Observe that we have that $\mb{P}(T/2<\tau<T)=1$ due to continuity and the relation \[\left\lo\int_{T/2}^\cdot\frac{1}{\sqrt{T-t}}\,dW_t,\int_{T/2}^\cdot\frac{1}{\sqrt{T-t}}\,dW_t\right\ro_T=\int_{T/2}^T\frac{1}{T-t}\,dt=+\infty.\]
By construction, $\lambda\cdot {W}$ is bounded by $\frac{\pi}{2\sqrt{-q}}$. We obtain, using \cite{Ka94} Lemma 1.3 similarly to the proof of \cite{FdR10} Lemma A.1,
\begin{align*}
\E\Bigl[ \mc{E}(- \lambda \cdot W)_T^q \Bigr]&=\E\Biggl[\exp\Biggl(-q (\lambda \cdot W)_T-\frac{q}{2} \int_{0}^{T} {\lambda}_{t}^{2}\,dt \Biggr)\Biggr]\\&\geq e^{-\frac{\pi\sqrt{-q}}{2}}\,\E\Biggl[\exp\Biggl(\frac{\pi^2}{8} \int_{T/2}^{\tau} \frac{1}{T-t}\,dt \Biggr)\Biggr]=+\infty,
\end{align*}
from which the statement follows immediately.
\end{proof}
Let us make two points concerning the above example, firstly that when $q\in[0,1)$ such a degeneracy cannot occur, as shown by Proposition \ref{ExplSolW}. In fact for a BMO martingale $\lambda\cdot W$ there actually always exists a (then unique) \emph{bounded} solution, as Corollary \ref{cornew} below shows. We recall from \cite{Ka94} that a continuous martingale $\ol{M}$ on the compact interval $[0,T]$ with $\ol{M}_0=0$ is a \emph{BMO martingale} if \[\big\|\ol{M}\big\|_{\mathrm{BMO}_2}:=\sup_{\tau}\bigg\|\E\Big[\bigl(\ol{M}_T-\ol{M}_\tau\bigr)^2\Big|\,\mc{F}_\tau\Big]^{1/2}\bigg\|_{L^\infty}<+\infty,\] where the supremum is over all stopping times $\tau$ valued in $[0,T]$.

Secondly we point out that the martingale in the above proposition is bounded. Indeed, it is a leitmotiv of the present article that requiring (in addition) the martingale $\lambda\cdot M$ to be bounded does not improve the situation with respect to finiteness of a BSDE solution. This is because the key estimates are all on the quadratic variation process $\lo\lambda \cdot M,\lambda \cdot M \ro$ which in general does not inherit such properties.

\begin{rmk} As described in the introduction, the assumption that $\lambda\cdot M$ is a BMO martingale is natural from a no-arbitrage perspective. We now give an additional financial interpretation of the BMO condition. Suppose for simplicity that $S$ is a geometric Brownian motion of the form
\begin{equation*}
dS_t= S_t \,\bigl( \sigma_{t}\, dW_t+ \mu_{t} \,dt),
\end{equation*}
 so that $\lambda =  \mu / \sigma^{2} $ and  $\lo\lambda \cdot M,\lambda \cdot M \ro = \int_0^\cdot \frac{\mu^{2}_t}{ \sigma^{2}_t} \, dt$. The Sharpe ratio, defined as  $\mu  / \sigma$, measures the return per unit of risk. A BMO condition on $\lambda \cdot M$  requires that the conditional expected integral of the squared Sharpe ratio $\E\bigl[ \int_{\cdot}^{T} \frac{\mu^{2}_{t}}{ \sigma^{2}_{t}} \, dt \big|\, \mathcal{F}_{\cdot} \bigr]$ be bounded, which implies a restriction on the asset not offering huge returns with tiny risk.

Developing this idea further, consider an investment strategy $\pi$ which in this remark represents the amount, not, as elsewhere, the proportion, of wealth invested in $S$. We assume that \mbox{$\E\bigl[\lo \lambda \cdot M,\lambda \cdot M\ro_T \bigr]<+\infty$} and that $\pi$ is predictable and satisfies \mbox{$\E\bigl[\lo \pi \cdot M,\pi \cdot M\ro_T \bigr]<+\infty$}. From the stock dynamics
\begin{equation*}
dS_t=\text{Diag}(S_t)\,\bigl(dM_t+ \,d\lo M,M \ro_t\lambda_t\bigr),
\end{equation*}
it follows that the expected gain (or loss) related to $\pi$ is given by
\begin{equation*}
\E\biggl[\sum_{i = 1}^{d} \int_{0}^{T} \frac{ \pi_{t}^{i}}{S_{t}^{i}} \, dS_t^{i}\biggr]=\E\biggl[\int_{0}^{T} \pi_{t}  \, dM_t \biggr]  +\E\biggl[\int_{0}^{T} \pi_{t}^{\tr} \,d\lo M,M \ro_t\lambda_t\biggr]= \E\biggl[\int_{0}^{T} \pi_{t}^{\tr} \,d\lo M,M \ro_t\lambda_t\biggr].
\end{equation*}
Using \cite{Pr05} Theorem IV.54, we deduce that $\lambda\cdot M$ is  a BMO martingale if and only if there exists a constant $c$ such that
\begin{equation} \label{H1bmo}
\Bigg| \,\E\Biggl[ \sum_{i = 1}^{d} \int_{0}^{T} \frac{ \pi_{t}^{i}}{S_{t}^{i}} \, dS_t^{i}\Biggr] \Bigg| \leq c\, \E\Bigl[ \lo \pi \cdot M,\pi \cdot M\ro_T^{1/2}\Bigr] =: c\, \|  \pi \cdot M \|_{\mathcal{H}^{1}} 
\end{equation}
for all $\pi$ with \mbox{$\E\bigl[\lo \pi \cdot M,\pi \cdot M\ro_T \bigr]<+\infty$}. Using \[\bigg\lo \sum_{i = 1}^{d} \int^\cdot_0 \frac{ \pi^{i}_t}{S^{i}_t} \, dS^{i}_t,\sum_{i = 1}^{d} \int_0^\cdot \frac{ \pi^{i}_t}{S^{i}_t} \, dS^{i}_t\bigg\ro_T = \lo \pi \cdot M,\pi \cdot M\ro_T,\] we can view  $\|  \pi \cdot M \|_{\mathcal{H}^{1}} $ as a measure of risk in our portfolio. We see from \eqref{H1bmo} that the assumption of $\lambda\cdot M$ being a BMO martingale means that portfolios with bounded risk (in this $\mathcal{H}^{1}$-sense) always have bounded expected gains. Conversely, if the expected gains are bounded in terms of such risk uniformly over all investment strategies, then $\lambda\cdot M$ needs to be a BMO martingale.
\end{rmk}

\subsection{Nonoptimality of BSDE Solutions}\label{SecNonOptim}
If a solution to the BSDE \eqref{BSDE} does exist, it does \emph{not} automatically lead to an optimal pair for the utility maximization problem. This is because it may fail to be in the right space (e.g. with respect to which uniqueness for BSDE solutions holds). We now provide a theoretical result to illustrate the problem. More precisely, in contrast to the classical It\^{o} representation theorem with square-integrable integrands, an analogous representation of random variables in terms of stochastic exponentials is not unique.
\begin{lem}\label{itoexplem}
Let $\xi$ be a random variable bounded away from zero and infinity, i.e. there are constants $L,\ell>0$ such that $\ell\leq\xi\leq L$ a.s. Then, for every real number $c \geq \E [ \xi ]$, there exists a predictable process $\alpha^{c}$ such that
\begin{equation}\label{itoexp}
\xi = c\, \mathcal{E}(\alpha^{c} \cdot W)_{T}, \quad \E\!\left[\int_{0}^{T} |\alpha_{t}^{c}|^{2} \, dt \right] <+\infty.
\end{equation}
However, there is only one pair $(c,\alpha )$ which satisfies $\xi = c\, \mathcal{E}(\alpha \cdot W)_{T}$ with $\alpha \cdot W$ a BMO martingale or, equivalently, with $c = \E[\xi]$.
\end{lem}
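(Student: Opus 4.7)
The plan is to split the argument into two parts. First I would handle the \emph{BMO characterization}, proving both directions of ``$\alpha \cdot W \in \mathrm{BMO}$ iff $c = \E[\xi]$'', which simultaneously yields existence and uniqueness of the BMO pair via Kazamaki's theorem and the It\^o representation. Second, I would construct a square-integrable (but non-BMO) $\alpha^c$ for each $c > \E[\xi]$ by gluing a strict-local-martingale piece, built via a hitting time in the spirit of Proposition~\ref{propnosol}, to a Doob-martingale continuation. The main obstacle will be controlling the $\mc{H}^2$-norm of the first piece via a first-passage-time estimate.

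For the BMO direction, if $\alpha \cdot W \in \mathrm{BMO}$ then by Kazamaki's theorem $\mc{E}(\alpha \cdot W)$ is a true martingale, so taking expectations in $\xi = c\,\mc{E}(\alpha \cdot W)_T$ forces $c = \E[\xi]$; moreover $c\,\mc{E}(\alpha \cdot W)_t = \E[\xi \,|\, \mc{F}_t]$, and the integrand $\alpha$ is uniquely determined since the It\^o representation of this right-hand side is unique. Conversely, for $c = \E[\xi]$ set $M_t := \E[\xi \,|\, \mc{F}_t]$, apply the It\^o representation $M = M_0 + \beta \cdot W$, and define $\alpha := \beta/M$, which is well-defined since $\ell \leq M \leq L$. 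Then $M = c\,\mc{E}(\alpha \cdot W)$; It\^o's formula for $\log M$ yields $\log M_T - \log M_t = (\alpha \cdot W)_T - (\alpha \cdot W)_t - \tfrac{1}{2}\int_t^T \alpha_s^2\,ds$, a quantity bounded in absolute value by $\log(L/\ell)$, so taking conditional expectations at any stopping time $\tau$ gives $\E\bigl[\int_\tau^T \alpha_s^2\,ds \,\big|\, \mc{F}_\tau\bigr] \leq 2\log(L/\ell)$ and hence $\alpha \cdot W \in \mathrm{BMO}_2$.

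For existence when $c > \E[\xi]$, $\mc{E}(\alpha^c \cdot W)$ must be a strict local martingale. Take $\mu_t := a/\sqrt{T-t}$ on $[0,T)$ for a constant $a > 0$ and set $L^c := c\,\mc{E}(\mu \cdot W)$. The Dambis--Dubins--Schwarz time change represents $L^c_t = c\exp(B_{A(t)} - A(t)/2)$ with $A(t) := a^2 \log(T/(T-t)) \uparrow \infty$ as $t \to T$ and $B$ a Brownian motion, so $L^c_t \to 0$ a.s. Since $L^c_0 = c > M_0$ while $L^c$ vanishes and $M \geq \ell > 0$, the two continuous paths must cross, making $\tau := \inf\{t : L^c_t = M_t\} < T$ almost surely. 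Set $\alpha^c_t := \mu_t \,\mathbf{1}_{[0,\tau]}(t) + (\beta_t/M_t)\,\mathbf{1}_{(\tau,T]}(t)$; then $c\,\mc{E}(\alpha^c \cdot W)$ agrees with $L^c$ on $[0,\tau]$ and with $M$ on $[\tau, T]$ (matching continuously at $\tau$ because $L^c_\tau = M_\tau$), hence $c\,\mc{E}(\alpha^c \cdot W)_T = M_T = \xi$.

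The principal technical obstacle is then the verification of $\E\bigl[\int_0^T (\alpha^c_t)^2\, dt\bigr] < \infty$. The post-$\tau$ contribution is immediate since $\int_\tau^T (\beta_t/M_t)^2 \,dt \leq \ell^{-2}\lo M,M\ro_T \in L^1$, $M$ being a bounded martingale. For the pre-$\tau$ piece, $\int_0^\tau \mu_s^2\, ds = A(\tau)$, and the identity $L^c_\tau = M_\tau \geq \ell$ rearranges to $B_{A(\tau)} - A(\tau)/2 = \log(M_\tau/c) \geq \log(\ell/c)$; consequently $A(\tau) \leq \sigma$ where $\sigma := \inf\{u \geq 0 : B_u - u/2 = \log(\ell/c)\}$ is the first-passage time of the Brownian motion $B$ with drift $-1/2$ to the fixed negative level $\log(\ell/c)$. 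Standard inverse-Gaussian computations give $\E[\sigma] = 2\log(c/\ell) < \infty$, yielding $\E[A(\tau)] < \infty$ and completing the construction.
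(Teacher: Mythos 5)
Your proof is correct and takes essentially the same route as the paper's: the BMO/uniqueness half via the It\^{o} representation of $\E[\xi\,|\,\mc{F}_t]$ and Kazamaki's theorem, and the existence half for $c>\E[\xi]$ by running a deterministic-volatility stochastic exponential down from $c$ until it first meets the Doob martingale $\E[\xi\,|\,\mc{F}_\cdot]$ and then switching integrands --- the paper's choice $\mu_t=1/(T-t)$ and its exponential first-passage bound from Revuz--Yor play exactly the roles of your $a/\sqrt{T-t}$ and Wald's identity. The one step to tighten is the deduction of $A(\tau)\le\sigma$: it does not follow from the single identity $B_{A(\tau)}-A(\tau)/2=\log(M_\tau/c)\ge\log(\ell/c)$ alone (the drifted path could have hit the level $\log(\ell/c)$ earlier and returned above it), but it does follow from the definition of $\tau$ as the \emph{first} crossing, since $L^c_t>M_t\ge\ell$ for all $t<\tau$ gives $B_u-u/2>\log(\ell/c)$ for every $u<A(\tau)$.
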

\begin{rmk}
Comparing the multiplicative representation \eqref{itoexp} with the classical one, see \cite{KS91} Theorem 4.15, namely
\[\xi = k +  (\beta  \cdot W)_{T}, \quad \E\!\left[\int_{0}^{T} |\beta_{t}| ^{2} \, dt \right] <+\infty,\]
we see that existence holds in both cases, whereas there is no uniqueness of $(c,\alpha^{c})$ in \eqref{itoexp} despite the fact that $\E\!\left[\int_{0}^{T} |\alpha_{t}^{c}|^{2} \, dt \right] <+\infty$, in contrast to the uniqueness of $(k,\beta )$. 

While in the standard It\^{o} representation for $L^2$-random variables the square-integra\-bility and martingale property are equivalent, our result shows that in the multiplicative form $\E\!\left[\int_{0}^{T} |\alpha_{t}^{c}|^{2} \, dt \right] <+\infty$  does not guarantee uniqueness. The intuition for the difference between $\beta$ in the additive and $\alpha^{c}$ in the multiplicative form is the following. Since $\beta$ is a square-integrable process, $\beta\cdot W$ is a martingale, hence it \emph{must} be the case that $k=\E[\xi]$. In contrast, the square-integrability of $\alpha^c$ is not sufficient for $\mc{E}(\alpha^c\cdot W)$ to be a martingale. Indeed, it can be that $\E[\mc{E}(\alpha^c\cdot W)_T]<1$ so that increasing $c\geq\E[\xi]$ may be offset by an appropriate choice of $\alpha^c$ such that \eqref{itoexp} still holds. A consequence of this is that uniqueness of $\xi = c\, \mathcal{E}(\alpha \cdot W)_{T}$ holds if $\alpha  \cdot W$ is a BMO martingale or equivalently (see \cite{Ka94} Theorem 3.4, using the boundedness of $\xi$) if $\mc{E}(\alpha  \cdot W)$ is a martingale.

One could argue that a more natural condition in \eqref{itoexp} is to assume that $\mathcal{E}(\alpha^{c} \cdot W)$ be a true martingale, however our aim is a characterization in terms of $\alpha^{c} \cdot W$ itself and thus we do not pursue this. Note that it is not possible to find $c <  \E [ \xi ]$ such that \eqref{itoexp} holds, because $\mathcal{E}(\alpha^{c} \cdot W)$ is always a positive local martingale, hence a supermartingale.
\end{rmk}
\begin{proof}  We first define $\ol{M}_{t}:= \E[ \xi | \mathcal{F}_{t}]$, $t \in [0,T]$, and apply  It\^o's representation theorem to the stochastic logarithm of $\ol{M}$, which is a BMO martingale by \cite{Ka94} Theorem 3.4 since $\ol{M}$ is bounded away from zero and infinity. This application yields a predictable process $\ol{\alpha}$ such that $\ol{\alpha} \cdot W$ is a BMO martingale and $\xi =\E [ \xi ]\, \mathcal{E}(\ol{\alpha} \cdot W)_{T}$. The uniqueness part of the statement is then immediate; if $ \alpha  \cdot W$ is a BMO martingale, we have $c=\E[\xi]$ and $\alpha  \cdot W \equiv \ol{\alpha} \cdot W$ since $\mc{E}(\alpha  \cdot W)$ is a martingale. Conversely, if $c=\E[\xi]$ the process $\mc{E}(\alpha  \cdot W)$ is a supermartingale with constant expectation, hence a martingale. Indeed, we then have 
\[\mc{E}(\alpha  \cdot W)\equiv \E[ \mc{E}({\alpha}  \cdot W)_T | \,\mc{F}_{.}]\equiv\frac{1}{\E[\xi]} \,\E[ \xi | \,\mc{F}_{.}] \equiv \E[ \mc{E}(\ol{\alpha}  \cdot W)_T |\, \mc{F}_{.}] \equiv\mc{E}(\ol{\alpha}  \cdot W)\] and thus $\alpha \cdot W\equiv\ol{\alpha}  \cdot W$, which is the BMO martingale from above.

To construct $\alpha^c$ we fix $c \geq \E [ \xi ]$ and define the stopping time
\[\tau_c :=\inf\bigg\{t\geq0~ \bigg|~\int_{0}^{t}\frac{1}{T-s}\,dW_{s} \leq \frac{t}{2T(T-t)} + \log \frac{\ol{M}_{t}}{c}  \bigg\}.\]
We argue that $ \tau_c < T$ a.s. To this end consider 
\[\ol{\tau}_{c} := \inf\bigg\{t\geq0~ \bigg|~\int_{0}^{t}\frac{1}{T-s}\,dW_{s} \leq \frac{t}{2T(T-t)}+ \log \frac{ \ell}{c}  \bigg\}\] and observe that $\tau_c \leq \ol{\tau}_{c}$. If we define the time change $\rho: [0,T] \to [0,+\infty]$ by $\rho(t) := \frac{t}{T(T-t)}$, then it follows from  \cite{RY99} II.3.14. that
\begin{equation}\label{tauexpmom}
\E\biggl[\exp\biggl(\frac{1}{8}\,\rho(\ol{\tau}_c)\biggr)\biggr]=\exp\!\left( -\frac{1}{2}\log \frac{\ell}{c} \right)  = \frac{c^{1/2}  }{ \ell^{1/2} } < + \infty.
\end{equation}
We deduce that $\E[\rho(\ol{\tau}_c)] < +\infty$, $\rho(\ol{\tau}_c) < +\infty$ a.s.~and $\ol{\tau}_c < T$ a.s. from which it follows that indeed $ \tau_c < T$ a.s. 

We now define \[\alpha^{c}_{t} := \frac{1}{T-t}\, \mathbf{1}_{[\![0,\tau_c]\!]}(t,\cdot)+ \ol{\alpha} \,\mathbf{1}_{]\!]\tau_c,T]\!]}(t,\cdot),\]
which satisfies
\[c \, \mathcal{E}(\alpha^{c} \cdot W)_{T} = c \, \frac{\ol{M}_{T}}{\ol{M}_{\tau_c}}\, \mathcal{E}(\alpha^{c} \cdot W)_{\tau_c}  = c\, \frac{\ol{M}_{T}}{\ol{M}_{\tau_c}}\frac{\ol{M}_{\tau_c}}{c}= \ol{M}_{T} = \xi,\]
where the second equality is due to the specific definition of the stopping time $\tau_c$. Moreover, we have
\[\E\!\left[ \int_{0}^{T}|\alpha^{c}_{t} |^{2}\,dt\right]\leq\E\!\left[\int_{0}^{\tau_c} \frac{1}{(T-t)^{2}}\,dt\right] + \E\!\left[ \int_{0}^{T}|\ol{\alpha}_{t} |^{2}\,d t\right] = \E[\rho (\tau_c)]+ \E\!\left[ \int_{0}^{T}|\ol{\alpha}_{t} |^{2}\,d t\right]\!,\]
which is finite because $ \E[\rho(\tau_c)] \leq \E[\rho(\ol{\tau}_c)] < +\infty$ and $\ol{\alpha} \cdot W$ is a BMO martingale.
\end{proof}
The standard method of finding solutions to quadratic BSDEs involves an exponential change of variables. A consequence of the preceding lemma is that the above type of nonuniqueness transfers to the corresponding BSDE solutions, in particular to those of the utility maximization problem. Observe that for each $c$ the process $\alpha^c\cdot W$ is square-integrable in contrast to classical locally integrable counterexamples. Indeed it is well known that without square-integrability even the standard It\^{o} decomposition is not unique. In fact, for every $k \in \mathbb{R}$ there exists $\beta^{k}$ such that
\[\xi = k +  (\beta^{k}  \cdot W)_{T}, \quad \int_{0}^{T} \big|\beta_{t}^{k}\big| ^{2} \, dt   <+\infty\quad\textrm{ a.s.}\] see \cite{ESY83} Proposition 1. We are hence able to construct distinct solutions to the BSDE \eqref{BSDE}. This amounts to some of those solutions being nonoptimal by Proposition \ref{ExplSolW}. 
Alternatively, uniqueness of the multiplicative decomposition $\xi = c\, \mathcal{E}(\alpha \cdot W)_{T}$ holds under an additional BMO assumption which then implies the uniqueness of $\hat{\Psi}$. We summarize these comments in the following theorem.

\begin{thm}\label{thm_continuum}
For all $p\in(-\infty,1)$ and $\lambda$ with $\int_0^T\lambda^2_t\,dt$ bounded, there exists a continuum of distinct solutions $({\Psi}^b,{Z}^b,{N}^b\equiv0)$ to the BSDE \eqref{BSDE}, parameterized by $b\geq0$, satisfying the following properties:
\begin{enumerate}
\item The martingale part ${Z}^b\cdot{W}$ is square-integrable for all $b\geq0$.
\item The process $\mc{E}\Bigl(\bigl[(1-q){Z}^b-q\lambda\bigr]\cdot W\Bigr)$ is a martingale if and only if $b=0$.
\item Defining $\nu^b$ as suggested by the formula in Theorem \ref{thmuniq}, the admissible process $\nu^b$ is the optimal strategy if and only if $b=0$. 
\end{enumerate} 
\end{thm}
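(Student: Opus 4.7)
The plan is to exploit the non-uniqueness of multiplicative stochastic-exponential representations established in Lemma \ref{itoexplem} to construct a one-parameter family of distinct solutions of \eqref{BSDE}. I would proceed in three stages: establish a bijection between BSDE solutions (with $N\equiv 0$) and representations of $\xi := \mc{E}(-\lambda\cdot W)_T^q$, invoke the lemma to produce a family of such representations, and verify the three claimed properties.

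First, I would apply It\^o's formula to $\exp((1-q)\Psi)$ along any solution $(\Psi, Z, 0)$ of \eqref{BSDE}; in analogy with the derivation of \eqref{PsiZWlamExpl}, this yields
\begin{equation*}
\exp\!\bigl((1-q)\Psi_t\bigr) \equiv \exp\!\bigl((1-q)\Psi_0\bigr)\,\mc{E}(\alpha \cdot W)_t\,\mc{E}(-\lambda\cdot W)_t^{-q},\qquad \alpha := (1-q)Z - q\lambda,
\end{equation*}
and evaluation at $t = T$ with $\Psi_T = 0$ gives $\xi = c\,\mc{E}(\alpha\cdot W)_T$ with $c := \exp((1-q)\Psi_0)$. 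Conversely, any such representation with $\alpha\cdot W$ square-integrable defines a solution of \eqref{BSDE} through the same formula. Setting $c_b := \E[\xi] + b$ for $b \geq 0$, an application of Lemma \ref{itoexplem} produces a predictable $\alpha^b$ with $\xi = c_b\,\mc{E}(\alpha^b\cdot W)_T$ and $\E\bigl[\int_0^T|\alpha^b_t|^2\,dt\bigr] < +\infty$. Defining $Z^b := (\alpha^b + q\lambda)/(1-q)$, $N^b \equiv 0$, and $\Psi^b$ through the explicit identity above yields a triple solving \eqref{BSDE}; distinctness is immediate from $\Psi^b_0 = (1-q)^{-1}\log c_b$, which is strictly increasing in $b$.

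The three properties then follow as follows. Claim (i) holds because $Z^b\cdot W$ is a linear combination of the square-integrable martingales $\alpha^b\cdot W$ and $\lambda\cdot W$, the latter being square-integrable by the hypothesis that $\int_0^T\lambda_t^2\,dt$ is bounded. For (ii), observe that $\mc{E}\bigl([(1-q)Z^b - q\lambda]\cdot W\bigr) \equiv \mc{E}(\alpha^b\cdot W)$, a nonnegative local martingale with terminal expectation $\E[\xi]/c_b$; this equals $1$ precisely when $b = 0$, and a nonnegative local martingale on $[0,T]$ with constant expectation is a true martingale. Claim (iii) is then a consequence of the uniqueness statement in Proposition \ref{ExplSolW} combined with Theorem \ref{thmuniq}: the primal/dual optimizers correspond to the unique BSDE solution for which $\mc{E}\bigl([(1-q)Z - q\lambda]\cdot W\bigr)$ is a true martingale, which by the above is precisely the $b = 0$ case.

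The principal technical obstacle is the invocation of Lemma \ref{itoexplem}: the lemma is stated for $\xi$ bounded away from $0$ and infinity, whereas our hypothesis only guarantees that $\xi = \mc{E}(-\lambda\cdot W)_T^q$ has moments of all orders. I would address this by revisiting the lemma's construction. The stopping time $\tau_c < T$ still holds a.s.\ via the martingale convergence $\E[\xi\,|\,\mc{F}_t]\to\xi$ and the time-changed asymptotics $B_{\rho(t)} - \rho(t)/2 \to -\infty$ of $\int_0^{\cdot}(T-s)^{-1}\,dW_s$, while the finiteness of $\E\bigl[\int_0^T(\alpha^b_t)^2\,dt\bigr]$ can be obtained from the $L^2$-bounded It\^o martingale representation of $\E[\xi\,|\,\mc{F}_\cdot]$ combined with the BMO property of $\lambda\cdot W$, possibly supplemented by a truncation argument applied to $\xi$.
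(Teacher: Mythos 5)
Your overall strategy --- converting the non-uniqueness of the multiplicative representation from Lemma \ref{itoexplem} into a continuum of BSDE solutions via the identity $\exp((1-q)\Psi_0)\,\mc{E}\bigl([(1-q)Z-q\lambda]\cdot W\bigr)_T=\mc{E}(-\lambda\cdot W)_T^q$ --- is exactly the paper's, and your treatment of properties (ii) and (iii) is fine. The problem is the step you yourself flag: you apply Lemma \ref{itoexplem} under $\mathbb{P}$ to $\xi=\mc{E}(-\lambda\cdot W)_T^q=\mc{E}(-q\lambda\cdot W)_T\exp\bigl(\tfrac{q(q-1)}{2}\int_0^T\lambda_t^2\,dt\bigr)$, which is \emph{not} bounded away from zero and infinity even when $\int_0^T\lambda_t^2\,dt$ is bounded, because the Dol\'eans factor $\mc{E}(-q\lambda\cdot W)_T$ is unbounded. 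Your proposed repair does not close this. Martingale convergence does give $\tau_c<T$ a.s., but the quantitative estimate \eqref{tauexpmom}, namely $\E[\exp(\tfrac18\rho(\ol{\tau}_c))]=\sqrt{c/\ell}$, depends essentially on the \emph{uniform} lower bound $\ol{M}\geq\ell>0$ to produce a constant barrier for the time-changed Brownian motion; without it you lose $\E[\rho(\tau_c)]<+\infty$, which is precisely what makes $\E[\int_0^{\tau_c}(T-t)^{-2}\,dt]$ finite and hence $\alpha^c\cdot W$ square-integrable --- i.e.\ property (i), the whole point of the counterexample. A vague ``truncation argument applied to $\xi$'' changes the random variable being represented and so no longer produces solutions of \eqref{BSDE}.

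The fix is to perform the Girsanov change of measure \emph{before} invoking the lemma: set $d\ti{\mathbb{P}}/d\mathbb{P}:=\mc{E}(-q\lambda\cdot W)_T$, under which $\ti{W}=W+q\int_0^\cdot\lambda_s\,ds$ is a Brownian motion, and apply Lemma \ref{itoexplem} (with the representation theorem for $\ti{\mathbb{P}}$-martingales) to $\xi:=\exp\bigl(\tfrac{q(q-1)}{2}\int_0^T\lambda_t^2\,dt\bigr)$, which \emph{is} bounded away from zero and infinity exactly by the hypothesis on $\int_0^T\lambda_t^2\,dt$. This strips off the unbounded Dol\'eans factor that breaks your version. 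One then writes $\ti{\Psi}^c_t=\log(c)+\int_0^t\alpha^c_s\,d\ti{W}_s-\tfrac12\int_0^t|\alpha^c_s|^2\,ds-\tfrac{q(q-1)}{2}\int_0^t\lambda_s^2\,ds$ and rescales to get $(\Psi^b,Z^b)$. There is a residual step you would also need: the lemma gives $\E_{\ti{\mathbb{P}}}[\int_0^T|\alpha_t^c|^2\,dt]<+\infty$, and transferring square-integrability back to $\mathbb{P}$ requires a Cauchy--Schwarz argument using $\E_{\ti{\mathbb{P}}}[\rho(\ol{\tau}_c)^2]<+\infty$ (available from \eqref{tauexpmom}) and an exponential $\ti{\mathbb{P}}$-moment of $\int_0^T|\ol{\alpha}_t|^2\,dt$ coming from the BMO property of $\ol{\alpha}\cdot\ti{W}$; this transfer is absent from your proposal but is needed for claim (i) as stated.
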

It is known from \cite{AIP08} Section 2.2 that quadratic BSDEs need not
have unique square-integrable solutions. These authors present a specific
example of a quadratic BSDE which allows
for distinct solutions with square-integrable martingale part. In contrast,
Theorem \ref{thm_continuum} shows that every BSDE related to power
utility maximization with bounded mean-variance tradeoff has no unique square-integrable solution, independently of the value of $p$. This underlines the importance of being able to find a solution to the BSDE \eqref{BSDE} with $Z\cdot W$ a BMO martingale in \cite{HIM05} and \cite{Mo09}.
\begin{proof}
We set $\xi:=\exp\Bigl(\frac{q(q-1)}{2}\int_0^T\lambda^2_t\,dt\Bigr)$ and define the measure change \[\frac{d\ti{\mathbb{P}}}{d\mathbb{P}}:=\mathcal{E}(-q\lambda\cdot{W})_T,\] so that $\ti{\mathbb{P}}$ is an equivalent probability measure under which $\widetilde{W}$ is a Brownian motion on $[0,T]$ where \[\widetilde{W}_t:={W}_t+q\int_0^t\lambda_s\,ds.\] Observe that this measure change is implicitly already present in the proof of Proposition \ref{ExplSolW}, see \eqref{auxM}. We now apply Lemma \ref{itoexplem} to the triple $\bigl(\ti{W},\ti{\mathbb{P}},(\mc{F}_t)_{t\in[0,T]}\bigr)$ noting that in its proof we may use It\^{o}'s representation theorem in the form of \cite{KS98} Theorem 1.6.7, i.e. we can write any $\ti{\mathbb{P}}$-martingale as a stochastic integral with respect to $\ti{W}$, although $\ti{W}$ may not generate the whole filtration $(\mc{F}_{t})_{t\in[0,T]}$. For every real number $c\geq\E_{\ti{\mathbb{P}}}[\xi]$ we then derive the existence of a predictable process $\alpha^c$ such that
\begin{equation*}
\xi = c\, \mathcal{E}(\alpha^{c} \cdot \ti{W})_{T}, \quad \E_{\ti{\mathbb{P}}}\!\left[\int_{0}^{T} |\alpha_{t}^{c}|^{2} \, dt \right] <+\infty.
\end{equation*}
For $t\in[0,T]$ we then set \[\ti{\Psi}_t^c:=\log(c)+\int_0^t\alpha^c_s\,d\ti{W}_s-\frac12\int_0^t|\alpha_s^c|^2\,ds-\frac{q(q-1)}{2}\int_0^t\lambda_s^2\,ds,\] so that $\ti{\Psi}^c$ solves the BSDE 
\begin{equation*} 
d\ti{\Psi}_{t}^c=\alpha^c_t\,d\ti{W}_t-\frac12\,|\alpha_t^c|^2\,dt-\frac{q(q-1)}{2}\,\lambda_t^2\,dt,\quad\ti{\Psi}^c_T=0.
\end{equation*}
Using the transformations $b:=c-\E_{\ti{\mathbb{P}}}[\xi]\geq0$, $\ti{\Psi}^c=:(1-q){\Psi^b}$ and $\alpha^c=:(1-q){Z^b}$ we arrive at the BSDE \eqref{BSDE},
\[d{\Psi}^b_t={Z}^b_t\,d{W}_t+\frac{q}{2}({Z}^b_t+\lambda_t)^2\,dt-\frac{1}{2}\,\bigl({Z}_t^b\bigr)^2\,dt,\quad{\Psi}^b_T=0,\]
which admits a continuum of distinct solutions, parameterized by $b\geq0$. We show that each martingale part is additionally square-integrable under ${\mathbb{P}}$. This follows from the inequality \[\E\!\left[\int_0^T|\alpha^c_t|^2\,dt\right]\leq\E_{\ti{\mathbb{P}}}\!\left[\mathcal{E}(q\lambda\cdot\ti{W})^{2}_T\right]^{1/2}\E_{\ti{\mathbb{P}}}\!\left[\left(\int_0^T|\alpha^c_t|^2\,dt\right)^2\right]^{1/2}\!\!.\]
Note that the second term on the right hand side is finite since from \eqref{tauexpmom} in the proof of Lemma \ref{itoexplem} we have that $\E_{\ti{\mathbb{P}}}\bigl[\rho(\ol{\tau}_c)^2\bigr]<+\infty$. Moreover, using $\ol{\alpha}$ from this proof, $\ol{\alpha}\cdot \ti{W}$ is a BMO martingale (under $\ti{\mathbb{P}}$), hence $\int_0^T|\ol{\alpha}_t|^2\,dt$ has an exponential $\ti{\mathbb{P}}$-moment of some order by \cite{Ka94} Theorem 2.2, see also Lemma \ref{KaJohnNirenL} and the comments thereafter. To derive that the first term is finite we use that
\begin{equation*}
\E_{\ti{\mathbb{P}}}\!\left[\mathcal{E}(q\lambda\cdot{\ti{W}})^{2}_T\right]\leq\E_{\ti{\mathbb{P}}}\!\left[\exp\!\left(6q^2\int_0^T\lambda^2_t\,dt\right)\right]^{1/2}<+\infty.
\end{equation*}
Clearly, the Assumption \ref{ass_expmom} is satisfied, hence our previous analysis applies. However, there is a continuum of distinct solutions $({\Psi}^b,{Z}^b)$ to the BSDE \eqref{BSDE} since for every $b=c-\E_{\ti{\mathbb{P}}}[\xi]\geq0$ we have that ${\Psi}^b_0=\frac{\log(c)}{1-q}$. From \cite{Ka94} Theorem 3.6 we have that $\alpha^c\cdot\ti{W}$ is a BMO martingale under $\ti{\mathbb{P}}$ if and only if $\alpha^c\cdot{W}$ is a BMO martingale under ${\mathbb{P}}$. This last condition holds if and only if $Z^b\cdot W$ is a BMO martingale under ${\mathbb{P}}$. We conclude that $\mc{E}\bigl(\bigl[(1-q){Z}^b-q\lambda\bigr]\cdot W\bigr)$ is a martingale if $b=0$. It cannot be a martingale for $b>0$ since otherwise $(\Psi^b,Z^b)$ would coincide with $(\hat{\Psi},\hat{Z})\equiv(\Psi^0,Z^0)$. The last assertion is then immediate.
\end{proof}


\section{Boundedness of BSDE Solutions and the BMO Property}\label{sec5}
Thus far we have worked under an exponential moments assumption on the mean-variance tradeoff which provides us with the existence of the primal and dual optimizers as well as a link between these optimizers and a special quadratic BSDE. We now connect the above study to the boundedness of solutions to quadratic BSDEs which we show to be intimately related to the BMO property of the martingale part and the mean-variance tradeoff.

For $q\in[0,1)$ and under the assumption that $\lambda\cdot M$ is a BMO martingale, we show that the BSDE \eqref{BSDE} has a bounded solution. This follows as a consequence of existence results and a priori estimates for a general class of BSDEs as described below. We consider the  BSDE 
\begin{equation}
d\Psi_t= Z_t^\tr\,dM_t + dN_t -F(t,\Psi_{t},Z_t)\,dA_t-\frac{1}{2}\,d\lo N,N\ro_t ,\quad \Psi_T=\xi,\label{BSDEF2}
\end{equation}
where, as in the appendix, $A$ is a nondecreasing bounded process such that $\lo M,M\ro=B^{\tr}B\cdot A$ for a predictable process $B$ valued in the space of $d\times d$ matrices. We assume that $\xi$ is a bounded random variable and that the driver $F$ is continuous in $(\psi,z)$ and satisfies
\begin{gather}\label{condF} 
-\frac{\delta}{2}  \| B_t z \|^2 - \| B_t \kappa_t \|^2 \leq F(t,\psi,z) \leq   \phi(|\psi|) + \frac{\gamma}{2}  \| B_t z \|^2 + \| B_t \eta_{t} \|^2 , \\  \psi \big(F(t,\psi,z) - F(t,0,z) \big) \leq \beta |\psi|^{2};\nonumber
\end{gather}
$\beta, \delta,\gamma \geq 0$ are constants, $\phi$ is a deterministic continuous nondecreasing function with $\phi(0) =0$, and  $\kappa$, $\eta$ are processes such that $\kappa \cdot M$, $\eta \cdot M$ are BMO martingales. If $\beta \neq 0$, we additionally assume that there exists a constant $c_{A}$ such that $A_{t} \leq c_{A} \cdot t$ for all $t \in [0,T]$, and we set $\beta^{\star} = c_{A}\cdot \beta$.  For the notion of solution to \eqref{BSDEF2}, see Appendix \ref{appendBSDE}.

\begin{thm} \mbox{}\label{solbdd}
\begin{enumerate}
\item If $\delta = 0$ and $\eta$ satisfies \hbox{$\| \eta \cdot M \|_{\mathrm{BMO}_{2}}^{2}  < \frac{e^{-T \beta^{\star} }}{\max\{1,\gamma \}} $}, then the BSDE \eqref{BSDEF2} has a solution with bounded first component.
\item Assume $\phi \equiv 0$ and that $(\Psi, Z,N)$ is a solution to \eqref{BSDEF2}. Then 
$\Psi$ is bounded if and only if both $Z \cdot M$ and $N$ are BMO martingales.
\item Suppose $\phi \equiv 0$, $\delta = 0$ and that $\eta$ satisfies \hbox{$\| \eta \cdot M \|_{\mathrm{BMO}_{2}}^{2}  < \frac{e^{-T \beta^{\star} }}{\max\{1,\gamma \}} $}. Then there exists a solution $(\Psi, Z,N)$ with bounded $\Psi$ and BMO martingales $Z \cdot M$ and $N$.
\end{enumerate}
\end{thm}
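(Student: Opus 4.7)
I would proceed in the natural order (i)--(iii), noting that (iii) is immediate from (i) and (ii).

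For (i), the plan is to combine approximation by Lipschitz-type BSDEs with an exponential transform supplying the a priori bound. Truncate the driver $F$ in its $z$-argument to obtain $F^{n}$ Lipschitz in $z$; classical BSDE theory then provides solutions $(\Psi^{n},Z^{n},N^{n})$ with bounded first component. The decisive step is a \emph{uniform-in-$n$} sup-norm bound on $\Psi^{n}$. For the upper bound I would apply It\^o's formula to $e^{\max\{1,\gamma\}\,\Psi^{n}_{t}}$: the weight is chosen so that the It\^o correction $\tfrac{\max\{1,\gamma\}^{2}}{2}\|B_{t}Z^{n}_{t}\|^{2}$ exactly dominates the $\tfrac{\gamma}{2}\|B_{t}Z^{n}_{t}\|^{2}$ piece of the upper bound on $F$ in \eqref{condF}; what remains is a drift controlled by $\phi(|\Psi^{n}|)$ and $\|B_{t}\eta_{t}\|^{2}$, with a Gronwall factor $e^{A_{t}\beta^{\star}}\le e^{T\beta^{\star}}$ arising from the $\beta$-monotonicity of $F$. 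A Gronwall step together with the energy inequality for the BMO martingale $\eta\cdot M$ then shows that the estimate closes precisely when $e^{T\beta^{\star}}\max\{1,\gamma\}\|\eta\cdot M\|^{2}_{\mathrm{BMO}_{2}}<1$, which is the hypothesis. The lower bound is simpler: from $\delta=0$ and by taking $\E[\,\cdot\mid\mc{F}_\tau]$ directly in \eqref{BSDEF2} it is controlled by $\|\xi\|_{\infty}$ and the BMO norm of $\kappa\cdot M$. A Kobylanski/Morlais-type monotone stability argument then passes the limit $n\to\infty$ and provides the sought bounded solution.

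For (ii), the direction ``$\Leftarrow$'' is direct: integrating \eqref{BSDEF2} between any stopping time $\tau$ and $T$, taking $\E[\,\cdot\mid\mc{F}_\tau]$ (the stochastic integrals are true martingales because BMO is contained in $\mc{H}^{1}$), and using $\phi\equiv 0$ together with the two-sided bound of \eqref{condF}---namely $|F(t,\psi,z)|\le \tfrac{\max(\delta,\gamma)}{2}\|B_{t}z\|^{2}+\|B_{t}\eta_{t}\|^{2}+\|B_{t}\kappa_{t}\|^{2}$---yields a uniform-in-$\tau$ bound on $|\Psi_{\tau}|$ in terms of $\|\xi\|_{\infty}$ and the BMO norms of $Z\cdot M$, $N$, $\eta\cdot M$ and $\kappa\cdot M$. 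For ``$\Rightarrow$'', assume $\Psi$ bounded and apply It\^o's formula to an appropriate convex function of $\Psi$---$\Psi^{2}$ suffices when $\|\Psi\|_{\infty}\max(\delta,\gamma)$ is small, otherwise one uses $h(\Psi)=e^{\alpha|\Psi|}-\alpha|\Psi|-1$ for $\alpha$ sufficiently large---exploiting the orthogonality of $N$ to $M$. Taking $\E[\,\cdot\mid\mc{F}_\tau]$ after localization, bounding $|F|$ as above, and absorbing the resulting $\|BZ\|^{2}$ term into the left-hand side delivers the BMO bounds on $Z\cdot M$ and $N$. Part (iii) then follows by combining: (i) produces a solution with bounded $\Psi$, and (ii) upgrades $Z\cdot M$ and $N$ to BMO martingales.

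The main obstacle is quantitative. The threshold $e^{-T\beta^{\star}}/\max\{1,\gamma\}$ in (i) is sharp, so every constant in the exponential transform and Gronwall step has to be tracked exactly: the exponential weight must be chosen so that its It\^o correction cancels the coefficient $\gamma$ rather than a larger multiple, and the Gronwall factor must match $e^{T\beta^{\star}}$ rather than a larger exponential. A secondary technical point, present throughout, is justifying that the stochastic integrals involved are true martingales; this is handled by localization together with the BMO energy inequalities (cf.\ Lemma~\ref{KaJohnNirenL}).
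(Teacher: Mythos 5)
Your proposal follows essentially the same route as the paper: the paper obtains (i) by invoking the a priori bounds of Delbaen--Hu--Richou (their Theorem 2.1, transported to the semimartingale setting), whose proof is exactly your truncation, exponential transform with weight $\tilde\gamma=\max\{1,\gamma\}$, Gronwall factor $e^{T\beta^\star}$ and John--Nirenberg step; it obtains the forward implication of (ii) by citing Mania--Santacroce, whose argument is the It\^o-formula-on-a-convex-function computation you sketch; the reverse implication and (iii) are handled exactly as you describe. One caution on (i): as literally written, a drift ``controlled by $\phi(|\Psi^n|)$'' cannot be closed by Gronwall for a general nondecreasing $\phi$. The correct mechanism is to split $F(t,\psi,z)=\bigl(F(t,\psi,z)-F(t,0,z)\bigr)+F(t,0,z)$, absorb the first bracket through the monotonicity condition $\psi\bigl(F(t,\psi,z)-F(t,0,z)\bigr)\le\beta|\psi|^2$ (this alone produces the factor $e^{T\beta^\star}$) and bound $F(t,0,z)\le\frac{\gamma}{2}\|B_tz\|^2+\|B_t\eta_t\|^2$ using $\phi(0)=0$, so that $\phi$ never enters the final estimate; since you arrive at the correct threshold $e^{T\beta^\star}\max\{1,\gamma\}\,\|\eta\cdot M\|^2_{\mathrm{BMO}_2}<1$, this is an imprecision of exposition rather than a flaw in the strategy, but it must be made explicit for the Gronwall step to be valid.
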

\begin{proof} (i) Using the assumption $\delta = 0$, we can argue similarly to the proof of \cite{DHR09} Theorem 2.1. That proof is given in a Brownian setting but translates correspondingly  to our semimartingale model similarly to \cite{MW10}. With $\tilde{\gamma} :=  \max\{1,\gamma\}$ and based on \eqref{condF}, this yields the existence of a solution $(\Psi,Z,N)$  satisfying
\begin{align*}
\Psi_{t} & \leq  \frac{1}{\tilde{\gamma} } \log \E \bigg[\exp\bigg( \tilde{\gamma} e^{ (T-t)\beta^{\star} }\xi^{+} + \tilde{\gamma} \int_{t}^{T}   e ^{  (s-t)\beta^{\star} } \| B_s \eta_{s} \|^2 \, d A_{s}  \bigg) \bigg| \,\mathcal{F}_{t}\bigg], \\
\Psi_{t} & \geq  \E \bigg[  \xi  - \int_{t}^{T} \| B_s \kappa_s \|^2  \,dA_s  \bigg| \,\mathcal{F}_{t}\bigg].
\end{align*}
Since $\xi \in L^{\infty}$ and $\kappa  \cdot M$ is a BMO martingale, the latter inequality shows that $\Psi$ is bounded from below. From the former inequality, we obtain
\begin{align*}
\Psi_{t}  & \leq   e^{  T \beta^{\star} } \| \xi \|_{L^{\infty}}  +  \frac{1}{\tilde{\gamma} } \log \E \bigg[\exp\bigg(  \tilde{\gamma} e ^{ T\beta^{\star} }  \int_{t}^{T}    \| B_s \eta_{s} \|^2\, d A_{s}  \bigg) \bigg| \,\mathcal{F}_{t}\bigg] \\ & \leq e^{  T \beta^{\star}  } \| \xi \|_{L^{\infty}} -  \frac{1}{\tilde{\gamma}}  \log \bigl(1-   \tilde{\gamma} e ^{  T \beta^{\star} }  \| \eta\cdot M \|^2_{\mathrm{BMO}_{2}}\bigr)  
\end{align*}
by the John-Nirenberg inequality; see Lemma  \ref{KaJohnNirenL}.
This shows that $\Psi$ is bounded from above due to the assumptions $\xi \in L^{\infty}$ and $\| \eta \cdot M \|_{\mathrm{BMO}_{2}}^{2} < \frac{e^{-T \beta^{\star} }}{\max\{1,\gamma \}} $.  Hence, $\Psi$ is bounded, which concludes the proof of (i).

For (ii), let first $(\Psi, Z,N)$ be a solution to \eqref{BSDEF2} with bounded $\Psi$.  The BMO properties of $Z \cdot M$ and $N$ follow from  \cite{MS05} Proposition 7, using that \eqref{condF} with $\phi \equiv 0$ implies
\[
 |F(t,\psi,z)| \leq     \frac{ \gamma+\delta }{2}  \| B_t z \|^2 + \| B_t \eta_{t} \|^2+ \| B_t \kappa_{t} \|^2.
\]
Conversely, if  $Z\cdot M$ and $N$ are BMO martingales, then $\Psi$ is bounded. This follows by taking the conditional $t$-expectation in the integrated version of \eqref{BSDEF2} and estimating the remaining finite variation parts with the help of the $\mathrm{BMO}_2$ norms of $\kappa \cdot M$, $\eta \cdot M$, $Z\cdot M$ and $N$, uniformly in $t$.

Finally, the statement (iii) is an immediate consequence of the items (i) and (ii).
\end{proof}

 Let us now apply this result to the specific BSDE \eqref{BSDE} related to power utility maximization.
 
 \begin{cor} \label{cornew} Assume that $q\in[0,1)$ and that $\lambda\cdot M$ is a BMO martingale. Then the BSDE \eqref{BSDE} has a solution with bounded first component. In particular, setting $(X,Y)$ as suggested by Theorem \ref{thmuniq} gives the pair of primal and dual optimizers.
\end{cor}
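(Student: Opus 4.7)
My plan is to reduce the statement to Theorem \ref{solbdd} applied to the specific driver of the utility maximization BSDE \eqref{BSDE}. With $\lo M,M\ro = B^\tr B\cdot A$, matching \eqref{BSDE} against the template \eqref{BSDEF2} gives
\[
F(t,\psi,z) \,=\, \frac{1-q}{2}\,\|B_t z\|^2 \,-\, q\,(B_t z)^\tr(B_t\lambda_t) \,-\, \frac{q}{2}\,\|B_t\lambda_t\|^2,
\]
which is independent of $\psi$. I would therefore take $\phi\equiv 0$ and $\beta = 0$, so that $\beta^\star = 0$ and the exponential prefactor $e^{-T\beta^\star}$ in Theorem \ref{solbdd}(i) collapses to $1$; the terminal condition $\xi = 0$ is trivially in $L^\infty$.

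The crux of the proof is to verify \eqref{condF} with $\delta = 0$ and, crucially, with $\eta$ whose BMO norm respects the smallness bound. I would handle the cross term via Young's inequality with a free parameter $\epsilon\in(0,1)$: the upper bound gives $\gamma = 1-q(1-\epsilon)\in(1-q,1)$ (so $\max\{1,\gamma\}=1$) together with $\eta_t = \sqrt{q(1-\epsilon)/(2\epsilon)}\,\lambda_t$; for the lower bound the positive term $\tfrac{1-q}{2}\|B_t z\|^2$ absorbs the cross term, yielding $\delta = 0$ and $\kappa$ a deterministic constant multiple of $\lambda$. Both $\eta\cdot M$ and $\kappa\cdot M$ thus inherit BMO from $\lambda\cdot M$.

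The main obstacle I anticipate is that $\|\lambda\cdot M\|_{\mathrm{BMO}_2}$ may be arbitrarily large, whereas Theorem \ref{solbdd}(i) demands $\|\eta\cdot M\|_{\mathrm{BMO}_2}^2 < 1$. The saving observation is that the prefactor $\sqrt{q(1-\epsilon)/(2\epsilon)}$ tends to $0$ as $\epsilon\uparrow 1$, so I can tune $\epsilon$ close enough to $1$ to meet this bound while $\gamma$ remains strictly below $1$. With this choice, Theorem \ref{solbdd}(i) supplies a solution $(\Psi,Z,N)$ with bounded first component, and then Theorem \ref{solbdd}(ii) upgrades $Z\cdot M$ and $N$ to BMO martingales.

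For the optimality assertion I would combine these with the hypothesis to observe that $\bigl[(1-q)Z - q\lambda\bigr]\cdot M + N$ is a sum of BMO martingales, hence itself BMO. By Kazamaki's theorem, its stochastic exponential is then a true martingale on $[0,T]$, which is precisely the sufficient condition recorded just after Theorem \ref{thmuniq} that identifies the pair $(X,Y)$ built from $(\Psi,Z,N)$ via the formulas of that theorem as the primal and dual optimizers.
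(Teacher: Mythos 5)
Your proposal is correct and follows essentially the same route as the paper: verify \eqref{condF} for the driver of \eqref{BSDE} with $\delta=0$, invoke Theorem \ref{solbdd} to get a bounded $\Psi$ with BMO martingale part, and conclude optimality via Kazamaki's theorem applied to $\bigl[(1-q)Z-q\lambda\bigr]\cdot M+N$. The only difference is cosmetic: the paper sidesteps your Young-inequality tuning of $\epsilon$ by noting that for $q\in[0,1)$ the term $-\frac{q}{2}\|B_t(z+\lambda_t)\|^2$ is nonpositive, so $F(t,z)\leq\frac{1}{2}\|B_tz\|^2$ holds with $\gamma=1$ and $\eta\equiv0$, making the smallness condition on $\|\eta\cdot M\|_{\mathrm{BMO}_2}$ vacuous.
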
 
\begin{proof} By Appendix \ref{appendBSDE}, the BSDE \eqref{BSDE} is of the form \eqref{BSDEF2} with the driver given~by
\[
 F(t,z)=-\frac{q}{2}(z+\lambda_t)^\tr B^{\tr}_tB_t(z+\lambda_t)+\frac{1}{2}z^\tr B^{\tr}_tB_tz.
\]
Using $q\in[0,1)$, we can show by an elementary calculation that  
\[
\frac{-q}{2(1-q)} \|B_t\lambda_{t}\|^{2}   \leq F(t,z) \leq \frac{1}{2} \|B_tz\|^{2}
\]
so that \eqref{condF} is satisfied. Thus we can apply Theorem \ref{solbdd} (iii) to obtain that there exists a solution triple $(\Psi,Z,N)$ such that the process $Z\cdot M+N$ is a BMO martingale. We derive that $\bigl[(1-q)Z-q\lambda\bigr]\cdot M+N$ is a BMO martingale which, by \cite{Ka94} Theorem 2.3, shows that $\mc{E}\bigl(\bigl[(1-q)Z-q\lambda\bigr]\cdot M+N\bigr)$ is a true martingale. We then deduce that solving the BSDE with a bounded $\Psi$ gives rise to an optimal pair for the primal and dual problem.
\end{proof}

\begin{rmk} Instead of applying Theorem \ref{solbdd}, which holds for a more general class of BSDEs, Corollary \ref{cornew} can also be shown as follows using specific results related to power utility maximization. Since $\lambda\cdot M$ is a BMO martingale, $Y^\lambda:=\mc{E}(-\lambda\cdot M)$ defines an equivalent local martingale measure for $S$ so that Assumption \ref{ass2} is satisfied, where we use an easy calculation to extend its item (ii) to $p=0$. By \cite{Ka94} Corollary 3.4, if $q\in(0,1)$, the process $Y^\lambda\in\mc{Y}(1)$ satisfies the reverse H\"{o}lder inequality. This means that there is constant $c_{\textit{\tiny{rH}},p}>0$ (which depends on $p = \frac{q}{q-1}$) such that for all stopping times $\tau$ valued in $[0,T]$,
$$
\E\!\left[\left(Y^\lambda_T\big/Y^\lambda_\tau\right)^q\bigg|\,\mc{F}_\tau\right]\geq
c_{\textit{\tiny{rH}},p}
$$
The assertion of Corollary \ref{cornew} then follows from \cite{Nu109} Proposition 4.5 and from the explicit formula $\hat{\Psi}\equiv0$ that holds in the case $q=0$.
\end{rmk}

Proposition \ref{ExplSolW} shows that in a Brownian framework the BSDE \eqref{BSDE} always admits a solution if $q\in[0,1)$. In view of  Corollary \ref{cornew}  this property extends to the general framework under the condition that $\lambda\cdot M$ is a BMO martingale. In particular, there is a unique bounded solution and it is given by the opportunity process for the utility maximization problem.

Let us now contrast this with the situation when $q<0$. The example in Subsection \ref{SecExpMom} provides a bounded BMO martingale $\lambda\cdot M$ such that the corresponding BSDE admits no solution. For this example the utility maximization problem satisfies $u(1)=+\infty$ and is thus degenerate ($\Psi_0\equiv+\infty$). 

The question now becomes whether, given an arbitrary $\lambda$ such that $\lambda \cdot M$ is a BMO martingale and $q<0$, we can still guarantee a bounded solution $\Psi$ to the BSDE \eqref{BSDE} when the utility maximization problem is \emph{nondegenerate}. We settle this question negatively in the next section providing an example for which Assumption \ref{ass_expmom} as well as the BMO property of $\lambda\cdot M$ hold, but the BSDE \eqref{BSDE} does not have a bounded solution.

To counterbalance this negative result in Section \ref{sec6} we provide, via the John-Nirenberg inequality, a condition on the order of the \emph{dynamic} exponential moments of the mean-variance tradeoff that guarantees boundedness of $\hat{\Psi}$. This is accompanied by a further example showing that this condition cannot be improved. To conclude, Corollary \ref{cornew}  and Theorem \ref{dem} provide a full characterization of the boundedness of solutions to the BSDE \eqref{BSDE} in terms of the dynamic exponential moments of $\lo \lambda \cdot M,\lambda\cdot M\ro$ for a BMO martingale $\lambda\cdot M$.


\section{Counterexamples to the Boundedness of BSDE Solutions}\label{SecCountBound}
We know that an optimal pair for the utility maximization problem gives rise to a triple $(\hat{\Psi},\hat{Z},\hat{N})$ solving the BSDE \eqref{BSDE}. Conversely, under suitable conditions, BSDE theory, based on \cite{BH08}, \cite{K00} or the results stated in the appendix, provides solutions to the BSDE with $\hat{\Psi}$ bounded (in $\mathfrak{E}$), with uniqueness in the class of bounded processes (in $\mathfrak{E}$). We now present an example of a BMO martingale $\lambda\cdot M$ which satisfies Assumption \ref{ass_expmom} and for which the BSDE \eqref{BSDE} related to the utility maximization problem has an unbounded solution for a given $p$. 

We develop this example in three steps. Firstly, we show that Assumption \ref{ass_expmom} alone (rather unsurprisingly) is not sufficient to guarantee a bounded BSDE solution. The corresponding $\lambda\cdot M$ involved is however not a BMO martingale. The second example is of BMO type, but lacks finite exponential moments of a sufficiently high order. It resembles the example provided in Subsection \ref{SecExpMom}. Finally, we combine these two examples to construct a BMO martingale $\lambda\cdot M$ such that $\lo \lambda\cdot M,\lambda\cdot M\ro_T$ has all exponential moments, but for which the BSDE does not allow for a bounded solution. Although this last step leaves the first two obsolete, we believe that the outlined presentation helps the reader in gaining insight into the nature of the degeneracy. In addition it hints at the minimal sufficient condition in Theorem \ref{dem} below. Namely, instead of simply requiring both the BMO and the exponential moments properties, they should be combined into a dynamic condition. While there is no reason to have a bounded solution if one requires only the BMO and the exponential moments properties, e.g. see the a priori estimate for general BSDEs in \cite{MW10} Proposition 3.1, constructing counterexamples appears to be nontrivial and similar ideas will be used to show sharpness of the dynamic condition in Section \ref{sec6}. Since in the present section we  construct suitable counterexamples, let $M=W$ be again a one-dimensional $\mathbb{P}$-Brownian motion in its augmented natural filtration.  

\subsection{Unbounded Solutions under All Exponential Moments}\label{CountBound1}
Let us assume the market price of risk is given by $\lambda:=-\sgn(W)\sqrt{|W|}$ so that the stock price dynamics read as follows, \[\frac{dS_t}{S_t}=dW_t-\sgn(W_t)\sqrt{|W_t|}\,dt.\] Note that in the above definition ``$-\sgn$" is motivated by economic rationale, to simulate a certain reverting behaviour of the returns. Assumption \ref{ass_expmom} is satisfied since \[\int_0^T\lambda^2_t\,d\lo M,M\ro_t=\int_0^T |W_t|\,dt\leq T\cdot\sup_{\substack{0\leq t\leq T}}|W_t|\]and by Doob's inequality, for $\varrho>1$,
\begin{align*}
\E\!\left[\exp\!\left( \varrho \sup_{\substack{0\leq t\leq T}}|W_t|\right)\right]&= \E\!\left[ \sup_{\substack{0\leq t\leq T}}\exp\!\left(\varrho|W_t|\right)\right]\leq \left(\frac{\varrho}{\varrho-1}\right)^\varrho \E\!\left[\exp\!\left(\varrho| W_T|\right)\right]\\&\leq 2\left(\frac{\varrho}{\varrho-1}\right)^\varrho e^{\,\varrho^2\,T/2}
<+\infty.
\end{align*}
Now let $p\in(0,1)$ so that $q<0$ and let $(\hat{X},\hat{Y})$ be the optimizers of the utility maximization problem, where $\hat{X}_0=x>0$ and $\hat{Y}_0=y:=u'(x)$. Since we are in a complete Brownian framework we have that $\hat{Y}=y\,\mc{E}(\sgn(W)\sqrt{|W|}\cdot W)$. If $\hat{\nu}$ denotes the optimal investment strategy we derive from Theorem \ref{thmuniq} that $(\hat{\Psi},\hat{Z},0)$ is the unique solution to the BSDE \eqref{BSDE} where $\hat{\Psi}:=\log(\hat{Y}\!/\,U'(\hat{X}))\in\mathfrak{E}$ and $\hat{Z}:=\sgn(W)\sqrt{|W|}+(1-p)\hat{\nu}$. According to \cite{Nu109} Proposition 4.5 $\hat{\Psi}$ is bounded if and only if $\hat{Y}$ satisfies the reverse H\"{o}lder inequality 
\begin{equation} \label{reversHolder} 
\E\!\left[\left(\hat{Y}_T\big/\hat{Y}_\tau\right)^q\bigg|\,\mc{F}_\tau\right]\leq c_{\textit{\tiny{rH}},p} 
\end{equation}
for some positive constant $c_{\textit{\tiny{rH}},p}$ and all stopping times $\tau$ valued in $[0,T]$, which we show is not the case.

The family $\{|W_t|\,|\,t\in[0,T]\}$ is uniformly integrable since $\E\bigl[W_t^2\bigr]=t\leq T$, so we may apply the stochastic Fubini theorem (\cite{Be06} Lemma A.1) to get, for some $t\in(0,T)$, via Jensen's inequality,
\begin{align*}
\E\!\left[\left(\hat{Y}_T\big/\hat{Y}_t\right)^q\bigg|\,\mc{F}_t\right]&=\E\!\left[\exp\!\left( q\int_t^T\sgn(W_s)\sqrt{|W_s|}\,dW_s-\frac{q}{2}\int_t^T|W_s|\,ds\right)\Bigg|\,\mc{F}_t\right]\\&\geq \exp \!\left(\E\!\left[ q\int_t^T\sgn(W_s)\sqrt{|W_s|}\,dW_s-\frac{q}{2}\int_t^T|W_s|\,ds\,\bigg|\,\mc{F}_t\right]\right)\\&=\exp\!\left(-\frac{q}{2} \,\E\!\left[ \int_t^T|W_s|\,ds\,\bigg|\,\mc{F}_t\right]\right)=\exp\!\left(-\frac{q}{2} \int_t^T\E\!\left[|W_s|\,\big|\,\mc{F}_t\right]\,ds\right)\\&\geq \exp\!\left(-\frac{q}{2}\int_t^T|W_t|\,ds\right)=\exp\!\left(-\frac{q(T-t)}{2}\,|W_t|\right).
\end{align*}
Since the last random variable is unbounded it cannot be the case that \eqref{reversHolder} holds, hence $\hat{\Psi}$ cannot be bounded.

However, $\lambda\cdot W$ from this example is not a BMO martingale since for $t\in (0,T)$,
\[\E\!\left[ \int_t^T|W_s|\,ds \, \bigg|\,\mc{F}_t\right]= \int_t^T\E\!\left[|W_s|\,\big|\,\mc{F}_t\right]ds  \geq  (T-t) \,|W_t|,\]
which shows that $\|\lambda\cdot M\|_{\text{BMO}_2}$ cannot be finite.

\subsection{Unbounded Solutions under the BMO Property}\label{CountBound2}
We continue with a BMO example for which the solution to the BSDE \eqref{BSDE} is unbounded. The idea is the following, from Proposition \ref{propnosol}, for $q<0$, there exists $\lambda$ with $ \lambda\cdot W$ a BMO martingale such that the BSDE \eqref{BSDE} has no solution (in any class of possible solutions). Replacing this $\lambda$ by $c \,\lambda$ for a constant $c$, it follows from \eqref{condex} below that the BSDE has either no solution (for $|c| \geq 1$) or has a solution which is bounded and fulfills a BMO property (for $|c| < 1$). This dichotomy is in line with the fact that for a BMO martingale $\overline{M}$ the set of all $q < 0$ such that $\mathcal{E}(\overline{M})$  satisfies the reverse H\"{o}lder inequality $R_{q}$ is open; compare Lemma \ref{lemholder} below. The insight then is to make $c$ a \emph{random} variable in order to construct $\lambda$ such that the BSDE \eqref{BSDE} has a solution which is not bounded. More precisely, we have the following result.
\begin{prop}\label{ThmCountBound2} For every $q < 0$ there exists a $\lambda$ with $ \lambda  \cdot W $ a BMO martingale such that,
\begin{enumerate}
  \item The BSDE \eqref{BSDE} has a unique solution $(\hat{\Psi},\hat{Z},\hat{N}\equiv0)$ with $\mc{E}\bigl( [(1-q)\hat{Z}-q\lambda] \cdot W\bigr)$ a martingale. In particular, solving \eqref{BSDE} and setting $(X,Y)$ as suggested by Theorem \ref{thmuniq} gives the pair of primal and dual optimizers.
  \item There does \emph{not} exist a solution $({\Psi},{Z})$ to \eqref{BSDE} with ${Z} \cdot W $ a BMO martingale or $\Psi$ \emph{bounded}.
\end{enumerate}
\end{prop}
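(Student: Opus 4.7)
Following the dichotomy noted in the text, a constant multiple $c\lambda_{0}$ of the $\lambda_{0}$ from Proposition \ref{propnosol} yields a bounded BMO solution for $|c|<1$ but no solution at $|c|=1$. My plan is to randomize $c$ as an $\mc{F}_{T/2}$-measurable scalar valued in $[0,1)$ whose essential supremum equals $1$, so that a (unique) solution exists while accumulating near the critical value makes it unbounded. With $\lambda_{0}$, $\tau$ as in Proposition \ref{propnosol} and $\lambda:=c\lambda_{0}$, the bound $|\lambda\cdot W|\le c\pi/(2\sqrt{-q})\le\pi/(2\sqrt{-q})$ is immediate, so $\lambda\cdot W$ is in fact bounded and hence BMO; the real work is the existence-but-unboundedness statement.

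\textbf{Reduction to a Brownian exit time.} The deterministic time change $u:=\int_{T/2}^{t}\frac{ds}{T-s}$ turns $\int_{T/2}^{\cdot}\frac{dW_{s}}{\sqrt{T-s}}$ into a Brownian motion $B$ and $\tau$ into the first exit time $\tau^{\star}$ of $B$ from $(-1,1)$. A direct calculation then gives
\begin{equation*}
\mc{E}(-c\lambda_{0}\cdot W)_{T}^{q}=\exp\Bigl(\tfrac{c\pi\sqrt{-q}}{2}\,B_{\tau^{\star}}+\tfrac{c^{2}\pi^{2}}{8}\,\tau^{\star}\Bigr),
\end{equation*}
where the first exponent is bounded since $|B_{\tau^{\star}}|=1$. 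Because $c$ is $\mc{F}_{T/2}$-measurable and the future increments of $W$ are independent of $\mc{F}_{T/2}$, one obtains $\E[\mc{E}(-\lambda\cdot W)^{q}_{T/2,T}\,|\,\mc{F}_{T/2}]=h(c)$ for an explicit deterministic function $h$, and the classical exit-time formula $\E[\exp(\theta\tau^{\star})]=1/\cos(\sqrt{2\theta})$ for $\theta<\pi^{2}/8$ gives $h(c)\asymp(1-c)^{-1}$ as $c\uparrow 1$, with $h(1)=+\infty$.

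\textbf{Choice of $c$ and part (i).} Pick $c:=1-\exp(-|W_{T/2}|)$, so $\mr{ess\,sup}\,c=1$ while $\E[(1-c)^{-1}]=\E[\exp(|W_{T/2}|)]<+\infty$. By the tower property, $\E[\mc{E}(-\lambda\cdot W)_{T}^{q}]\lesssim\E[h(c)]<+\infty$, so condition \eqref{ExpSolWCond} holds and Proposition \ref{ExplSolW} yields (i): a unique solution $(\hat{\Psi},\hat{Z},0)$ with $\mc{E}\bigl([(1-q)\hat{Z}-q\lambda]\cdot W\bigr)$ a true martingale, together with the formula $\hat{\Psi}_{T/2}=(1-q)^{-1}\log h(c)$ up to a bounded additive term. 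Since $h(c)\to+\infty$ as $|W_{T/2}|\to\infty$ and $1-q>0$, $\hat{\Psi}_{T/2}$ is essentially unbounded above, so $\hat{\Psi}$ is not bounded.

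\textbf{Part (ii) and the main obstacle.} Suppose for contradiction that some solution $(\Psi,Z)$ to \eqref{BSDE} has $\Psi$ bounded or $Z\cdot W$ a BMO martingale. The driver of \eqref{BSDE} is purely quadratic in $z$ with no $\psi$-dependence, so $\phi\equiv 0$ and Theorem \ref{solbdd}(ii) applies, making the two properties equivalent; either therefore yields both. Combined with the already established BMO property of $\lambda\cdot W$, this makes $[(1-q)Z-q\lambda]\cdot W$ a BMO martingale, so $\mc{E}\bigl([(1-q)Z-q\lambda]\cdot W\bigr)$ is a true martingale by \cite{Ka94} Theorem~2.3. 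The uniqueness statement of Proposition \ref{ExplSolW} then forces $(\Psi,Z)=(\hat{\Psi},\hat{Z})$, contradicting the unboundedness of $\hat{\Psi}$ shown in the previous step. The main obstacle is the delicate calibration of $c$: its distribution near $1$ must be thin enough that $\E[h(c)]<+\infty$ (to preserve existence), yet satisfy $\mr{ess\,sup}\,c=1$ (to destroy boundedness of $\hat{\Psi}$); the explicit Brownian exit-time MGF is what makes this trade-off computable.
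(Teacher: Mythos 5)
Your proposal is correct and follows essentially the same route as the paper: the paper also randomizes the multiplicative constant in front of the $\lambda$ from Proposition \ref{propnosol} through an $\mc{F}_{T/2}$-measurable coefficient valued in $[0,1)$ with essential supremum $1$, exploits the independence of the exit time $\tau$ from $\mc{F}_{T/2}$ together with the exit-time exponential moment formula $1/\cos(c\pi/2)$ to verify \eqref{ExpSolWCond} while making $\hat{\Psi}_{T/2}$ unbounded, and then argues part (ii) exactly as you do via Theorem \ref{solbdd}(ii), Kazamaki's criterion and the uniqueness in Proposition \ref{ExplSolW}. The only (immaterial) difference is the parametrization of the random coefficient: the paper takes $\alpha=\frac{2}{\pi}\arccos\sqrt{\Phi\bigl(\sqrt{2/T}\,W_{T/2}\bigr)}$ so that $1/\cos(\pi\alpha/2)$ has expectation exactly $2$, whereas you take $c=1-e^{-|W_{T/2}|}$ and bound $\E[h(c)]$ by $\E[e^{|W_{T/2}|}]$.
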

\begin{proof}
For $t\in[0,T]$ we set
\begin{equation*} 
\lambda_t:=\frac{\pi\alpha}{2\sqrt{-q(T-t)}}\,\mathbf{1}_{]\!]T/2,\tau]\!]}(t,\cdot),
\end{equation*}
where \[\alpha := \frac{2}{\pi} \arccos\sqrt{\Phi\!\left(\sqrt{2/T}\, W_{T/2}\right)}\] for $\Phi$ the standard normal cumulative distribution function and $\tau$ the stopping time from the proof of Proposition \ref{propnosol},
\begin{equation}\label{stoptau}
\tau := \inf\Bigg\{ t >\frac{T}{2}\,\Bigg|\, \Bigg| \int_{T/2}^{t} \frac{1}{\sqrt{T-s}}\,dW_{s}\Bigg|\geq1 \Bigg\}.
\end{equation}
Note that $\Phi\bigl(\sqrt{2/T}\,W_{T/2}\bigr)$ is uniformly distributed on $(0,1]$ and that $\alpha$ is valued in $[0,1)$ a.s. It follows immediately that $\lambda\cdot W$ is bounded by $\frac{\pi}{2\sqrt{-q}}$, in particular it is a BMO martingale.

Using \cite{Ka94} Lemma 1.3 in the same way as in the proof of \cite{FdR10} Lemma A.1, we obtain that
\begin{align*}
\E\Bigl[ \mc{E}(- \lambda \cdot W)_T^q \Bigr]&\leq e^{\frac{\pi\sqrt{-q}}{2}}\,\E\Biggl[\exp\Biggl(\frac{\pi^2\alpha^2}{8} \int_{T/2}^{\tau} \frac{1}{T-t}\,dt \Biggr)\Biggr]=e^{\frac{\pi\sqrt{-q}}{2}}\,\E\Biggl[\frac{1}{\cos\bigl(\pi\alpha/2\bigr)}\Biggr]\\&= e^{\frac{\pi\sqrt{-q}}{2}}\,\E\Biggl[\frac{1}{\sqrt{\Phi \big(\sqrt{2/T}\, {W}_{T/2}\big)}} \Biggr]=2 e^{\frac{\pi\sqrt{-q}}{2}}<+\infty,
\end{align*}
so that Proposition \ref{ExplSolW} gives the first assertion. Due to the $\mc{F}_{T/2}$-measurability of $\alpha$ and the $\mc{F}_{T/2}$-independence of $\tau$, we have
\begin{align*}
\exp\bigl((1-q)\hat{\Psi}_{T/2}\bigr)&\geq e^{\frac{-\pi\sqrt{-q}}{2}}\,\E\Biggl[\exp\Biggl( \frac{\pi^2\alpha^2}{8}\int_{T/2}^{\tau} \frac{1}{T-t}\,dt \Biggr) \Bigg|\,\mc{F}_{T/2}\Biggr]\\&= e^{\frac{-\pi\sqrt{-q}}{2}}\,\E\Biggl[\exp\Biggl( \frac{\pi^2x^2}{8}\int_{T/2}^{\tau} \frac{1}{T-t}\,dt \Biggr)\Biggr]\Bigg|_{x=\alpha}=\frac{e^{\frac{-\pi\sqrt{-q}}{2}}}{\cos(\pi\alpha/2)}.
\end{align*}
This shows that
\[(1-q)\hat{\Psi}_{T/2}\geq-\frac{\pi\sqrt{-q}}{2}-\frac12\,\log\Bigl(\Phi\Bigl(\sqrt{2/T}\,W_{T/2}\Bigr)\Bigr),\]
which is unbounded by the uniform distribution of $\Phi\bigl(\sqrt{2/T}\,W_{T/2}\bigr)$.

For item (ii) assume that there exists a solution $({\Psi},Z)$ to \eqref{BSDE} with $ Z \cdot W$ a BMO martingale or ${\Psi}$ bounded. By Theorem \ref{solbdd}(ii) we can restrict ourselves to assuming that $ \Psi$ is bounded, which implies that $ Z \cdot W$ is a BMO martingale so that $\mc{E}\bigl( [(1-q){Z}-q\lambda] \cdot W\bigr)$ is a martingale. By uniqueness, $({\Psi},Z)$ coincides with $(\hat{\Psi}, \hat{Z})$ in contradiction to the unboundedness of $\hat{\Psi}$.
\end{proof}

\subsection{Unbounded Solutions under All Exponential Moments \emph{and} the BMO Property}\label{CountBound3}
The two previous subsections raise the question whether we can find a BMO martingale $ \lambda \cdot M$ such that its quadratic variation has all exponential moments and the BSDE \eqref{BSDE} has only an unbounded solution. Roughly speaking, the idea is to combine the above two examples by translating the crucial distributional properties of $|W|$ and $\alpha$ into the corresponding properties of a suitable stopping time $\sigma$. This guarantees that the BMO property and the exponential moments condition are satisfied simultaneously, while we can also achieve the unboundedness of the BSDE solution by using independence. Table~\ref{tab} summarizes the key properties.
\begin{table}[h!]
  \centering 
\begin{tabular}{lll} \toprule[0.12 em]
&  \textbf{Form of $\lambda^{2}_{t}$} & \textbf{Crucial properties} \\
\midrule[0.12 em]
 \multirow{2}{*}{\textbf{First example (see \ref{CountBound1})}} & \multirow{2}{*}{$|W_{t}|$} & $|W_{t}|$ is unbounded,\\ && has all exponential moments \\ \midrule
 \multirow{2}{*}{\textbf{Second example (see \ref{CountBound2})}}  & \multirow{2}{*}{$\frac{\pi^2\alpha^{2}}{4(-q)} \frac{1}{T-t}\,  \mathbf{1}_{]\!]T/2,\tau]\!]}(t,\cdot)$} & $\alpha^{2} \in [0,1)$, $\mathbb{P}(\alpha^{2}\geq\varrho) >0\,\forall \varrho < 1$, \\
 & & $\E\big[1/{\cos(\alpha \pi /2) } \big] < +\infty$  \\
\midrule[0.12 em]
 \multirow{2}{*}{\textbf{Combination}} &  \multirow{2}{*}{$\frac{\pi^2}{4(-q)} \frac{1}{T-t} \mathbf{1}_{]\!]T/2,\tau\wedge\sigma]\!]}(t,\cdot) $} & $ \sigma\in(T/2,T]$, $\mathbb{P}(\sigma\geq\varrho) >0\,\forall \varrho < T$,\\& & $\int_0^\sigma\frac{1}{T-t}\,dt$ has all exponential moments \\
\bottomrule[0.12 em]
\end{tabular}
\vspace{2mm}
  \caption{Comparison of the BSDE examples from Section \ref{SecCountBound}} \label{tab} 
\end{table}
\begin{thm}\label{ThmCountBound3} For every $q < 0$, there exists a $\lambda$ such that,
\begin{enumerate}
\item The process $ \lambda  \cdot W $ is a BMO martingale. 
  \item For all $\varrho>0$ we have $\E\bigr[\exp\bigl(\varrho\int_0^T\lambda^2_t\,dt\bigr)\bigr]<+\infty$.
  \item The BSDE \eqref{BSDE} has a unique solution $(\hat{\Psi},\hat{Z},\hat{N}\equiv0)$ with $\mc{E}\bigl( [(1-q)\hat{Z}-q\lambda] \cdot W\bigr)$ a martingale. In particular, solving \eqref{BSDE} and setting $(X,Y)$ as suggested by Theorem \ref{thmuniq} gives the pair of primal and dual optimizer.
  \item There does \emph{not} exist a solution $({\Psi},{Z})$ to \eqref{BSDE} with ${Z} \cdot W $ a BMO martingale or $\Psi$ \emph{bounded}.
\end{enumerate}
\end{thm}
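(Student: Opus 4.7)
The plan is to combine the stopping time $\tau$ from \eqref{stoptau} (which is independent of $\mc{F}_{T/2}$) with an $\mc{F}_{T/2}$-measurable truncation $\sigma\in[T/2,T)$ enjoying two features: (a)~$\mathbb{P}(T-\sigma<\epsilon)>0$ for every $\epsilon>0$, so that the truncation remains active arbitrarily close to $T$ with positive probability, and (b)~$\E[(T-\sigma)^{-r}]<+\infty$ for every $r>0$, to control all exponential moments. A convenient choice is
\[
\sigma:=T-\min\bigl(-1/\log U,\,T/2\bigr),\qquad U:=\Phi\bigl(\sqrt{2/T}\,W_{T/2}\bigr),
\]
for which the substitution $v=\log(1/U)$ gives $\E[(\log(1/U))^{r}]=\Gamma(r+1)<+\infty$, yielding~(b), while~(a) is immediate. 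One then defines
\[
\lambda_t:=\frac{\pi}{2\sqrt{-q(T-t)}}\,\mathbf{1}_{]\!]T/2,\,\tau\wedge\sigma]\!]}(t,\cdot).
\]
Item~(i) follows because $\lambda\cdot W$ is bounded by $\pi/(2\sqrt{-q})$, and~(ii) follows from
\[
\int_{0}^{T}\lambda_{t}^{2}\,dt\;=\;\frac{\pi^{2}}{4(-q)}\log\frac{T/2}{T-\tau\wedge\sigma}\;\le\;\frac{\pi^{2}}{4(-q)}\log\frac{T/2}{T-\sigma}
\]
combined with~(b).

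For item~(iii), the boundedness of $\lambda\cdot W$ yields via \cite{Ka94} Lemma~1.3 that the left-hand side of \eqref{ExpSolWCond} is dominated, up to the multiplicative constant $e^{\pi\sqrt{-q}/2}$, by $\E[\exp(-(q/2)\int_{0}^{T}\lambda_{t}^{2}\,dt)]$, which is finite by~(ii). Proposition~\ref{ExplSolW} then provides the unique solution $(\hat{\Psi},\hat{Z})$ for which $\mc{E}\bigl([(1-q)\hat{Z}-q\lambda]\cdot W\bigr)$ is a martingale, and the identification of the optimizers follows from Theorem~\ref{thmuniq}.

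The heart of the proof is item~(iv). Mimicking the unboundedness calculation in Proposition~\ref{ThmCountBound2}, one uses \eqref{hatPsiExpl} together with the Kazamaki-type lower bound afforded by the boundedness of $\lambda\cdot W$ to obtain
\[
\exp\!\bigl((1-q)\hat{\Psi}_{T/2}\bigr)\;\ge\; e^{-\pi\sqrt{-q}/2}\,\E\!\left[\exp\!\left(\frac{\pi^{2}}{8}\int_{T/2}^{\tau\wedge\sigma}\frac{1}{T-t}\,dt\right)\bigg|\,\mc{F}_{T/2}\right].
\]
Since $\tau$ is independent of $\mc{F}_{T/2}$, the conditional expectation equals $g(\sigma)$, where
\[
g(s):=\E\!\left[\exp\!\left(\frac{\pi^{2}}{8}\int_{T/2}^{\tau\wedge s}\frac{1}{T-t}\,dt\right)\right]
\]
is a deterministic, nondecreasing function of $s\in[T/2,T)$. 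By monotone convergence and the divergence $\E\bigl[\exp\bigl((\pi^{2}/8)\int_{T/2}^{\tau}(T-t)^{-1}\,dt\bigr)\bigr]=+\infty$ from the proof of Proposition~\ref{propnosol}, we have $g(s)\to+\infty$ as $s\nearrow T$, while property~(a) ensures that $\sigma$ exceeds any level $<T$ with positive probability. Hence $\hat{\Psi}_{T/2}$ is unbounded. Any solution $(\Psi,Z)$ to \eqref{BSDE} with $\Psi$ bounded or $Z\cdot W$ of BMO type would, by Theorem~\ref{solbdd}(ii), enjoy both properties, making $\mc{E}\bigl([(1-q)Z-q\lambda]\cdot W\bigr)$ a true martingale; the uniqueness in Proposition~\ref{ExplSolW} would then force $(\Psi,Z)=(\hat{\Psi},\hat{Z})$, contradicting the unboundedness just established.

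The main obstacle is the divergence $g(s)\to+\infty$: although it follows in principle from Proposition~\ref{propnosol} via monotone convergence, the argument requires careful bookkeeping to split according to $\{\tau\le s\}$ (where the critical-exponent behaviour of the Brownian exit time from $[-1,1]$ produces the unboundedness) and $\{\tau>s\}$ (where the truncation keeps the exponential finite). Once this is handled, the remaining estimates are routine combinations of the techniques already used in Propositions~\ref{propnosol} and~\ref{ThmCountBound2}.
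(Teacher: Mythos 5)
Your proposal is correct and follows essentially the same route as the paper: truncate $\tau$ by an $\mc{F}_{T/2}$-measurable time $\sigma$ that charges every neighbourhood of $T$ yet has all negative moments of $T-\sigma$, then use the freezing argument and monotone convergence to make $\hat{\Psi}_{T/2}$ unbounded. The only difference is cosmetic --- you take $\sigma=T-\min(-1/\log U,\,T/2)$ while the paper builds $\sigma$ from the density $c_0e^{-1/(T-s)}$ --- and both choices deliver exactly the two properties (a) and (b) that the argument needs (your worry about extra ``bookkeeping'' for $g(s)\to+\infty$ is unfounded: monotone convergence alone suffices, as in the paper).
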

\begin{proof}
Let us first construct $\sigma$ with the desired distributional properties. We define the nonnegative continuous function $f:(T/2,T]\to \mathbb{R}$, $f(s):= c_0\cdot e^{-\frac{1}{T-s}}$, where $c_0>0$ is a constant such that $\int_{T/2}^Tf(s)\,ds=1.$ We then consider the strictly increasing function $F:(T/2,T]\to (0,1]$, $F(s):=\int_{T/2}^sf(u)\,du$ and its inverse $F^{-1}:(0,1]\to(T/2,T]$. We set \[\sigma:=\bigl(F^{-1}\circ\Phi\bigr)\Bigl(\sqrt{2/T}\,W_{T/2}\Bigr),\] so that $\sigma$ is an $\mc{F}_{T/2}$-measurable random variable with values in $(T/2,T]$ and cumulative distribution function $F$. Now define for $t\in[0,T]$,
\begin{equation*}
\lambda_t:=\frac{\pi}{2\sqrt{-q(T-t)}}\,\mathbf{1}_{]\!]T/2,\tau\wedge \sigma]\!]}(t,\cdot),
\end{equation*}
where $\tau$ is the stopping time from \eqref{stoptau}. It follows immediately that $\lambda\cdot W$ is bounded by $\frac{\pi}{2\sqrt{-q}}$, hence a BMO martingale.

Let us now show that $\int_0^T\lambda^2_t\,dt$ has all exponential moments. Take $\varrho>0$ and $\bar{\varrho}\geq\frac{\pi^2\varrho}{4(-q)}\vee 2$ an integer. We derive
\begin{align*}
\E\Biggl[\exp\Biggl( \varrho \int_{0}^{T}  \lambda_{t}^{2}\, d t \Biggr) \Biggr]
&\leq \E\Biggl[\exp\Biggl( \bar{\varrho} \int_{T/2}^{\sigma}  \frac{1}{T-t}\, d t \Biggr) \Biggr]
\\&=\E\bigl[\exp\bigl( \bar{\varrho}\,[ \log(T/2)-\log(T-\sigma)]\bigr) \bigr]=(T/2)^{\bar{\varrho}}\,\E\Biggl[\frac{1}{(T-\sigma)^{\bar{\varrho}}}\Biggr]
\\&=c_0(T/2)^{\bar{\varrho}}\int_{T/2}^T\biggl(\frac{1}{T-s}\biggr)^{\bar{\varrho}}e^{-\frac{1}{T-s}}\,ds=c_0(T/2)^{\bar{\varrho}}\int_{2/T}^{+\infty}u^{\bar{\varrho}-2}e^{-u}\,du\\
&=c_0(T/2)^{\bar{\varrho}}(\bar{\varrho}-2)!\,e^{-2/T}\sum_{k=0}^{\bar{\varrho}-2}\frac{(2/T)^k}{k!}<+\infty,
\end{align*}
where in the last equality we used the representation of the incomplete gamma function at integer points (or, directly, integration by parts). A standard argument then shows that $\E\bigl[ \mc{E}(- \lambda \cdot W)_T^q \bigr] < + \infty,$ see the proof of Theorem \ref{dem}(i) below. It follows from the Proposition \ref{ExplSolW} that there exists a unique solution $(\hat{\Psi},\hat{Z})$ to the BSDE \eqref{BSDE} such that $\mc{E}([(1-q)\hat{Z}-q\lambda ] \cdot W)$ is a martingale and the first component is given by
\begin{equation*}
\hat{\Psi}_t = \frac{1}{1-q}\, \log\Bigl( \E\bigl[ \mc{E}(- \lambda \cdot W)_{t,T}^q \,\big|\, \mathcal{F}_t\bigr]\Bigr),\quad t\in[0,T],\quad \text{a.s.}
\end{equation*}
We deduce that a.s.
\begin{align*}
\exp\bigl((1-q)\hat{\Psi}_{T/2}\bigr)&\geq e^{\frac{-\pi\sqrt{-q}}{2}}\E\Biggl[\exp\Biggl( \frac{\pi^2}{8}\int_{T/2}^{\tau\wedge\sigma} \frac{1}{T-t}\,dt \Biggr) \Bigg|\,\mc{F}_{T/2}\Biggr]\\&= e^{\frac{-\pi\sqrt{-q}}{2}}\E\Biggl[\exp\Biggl( \frac{\pi^2}{8}\int_{T/2}^{\tau\wedge s} \frac{1}{T-t}\,dt \Biggr)\Biggr]\Bigg|_{s=\sigma}
\end{align*}
because $\sigma$ is $\mc{F}_{T/2}$-measurable and $\tau$ is independent from $\mc{F}_{T/2}$. From monotone convergence, it follows that
\begin{equation}\label{liminf}
\lim_{s\uparrow T} \E\Biggl[\exp\Biggl(\frac{\pi^2}{8} \int_{T/2}^{\tau\wedge s}\frac{1}{T-t}\,dt \Biggr) \Biggr] =
\E\Biggl[\exp\Biggl( \frac{\pi^2}{8} \int_{T/2}^{\tau}  \frac{1}{T-t} \,dt \Biggr) \Biggr] = +\infty.
\end{equation}
We now fix $K >0$ and take $s_{0}\in(T/2,T)$ such that 
\[\E\Biggl[\exp\Biggl(\frac{\pi^2}{8}\int_{T/2}^{\tau\wedge s} \frac{1}{T-t}\,dt \Biggr) \Biggr] \geq e^{(1-q)K+\frac{\pi\sqrt{-q}}{2}} \quad \textrm{for all }s \in [s_{0},T),\]
which is possible by \eqref{liminf}. This implies ${\mathbb{P}}(\hat{\Psi}_{T/2} \geq K) \geq \mathbb{P}(\sigma\geq s_{0})=1-F(s_0)>0$ since $s_0<T$, in particular $\hat{\Psi}$ is unbounded. The last item then follows as in the previous proof.
\end{proof}
\begin{rmk}
It is interesting to compare, for different constants $c \in\mathbb{R}$, the above different definitions of ${\lambda}$ regarding the behaviour of the solution to the BSDE 
\begin{equation} \label{cbsde}
d{\Psi}_{t} = {Z}_{t}\, d{W}_{t}+\frac{q}{2}\,({Z}_{t}+c\lambda_t)^{2}\, dt -\frac12\,Z_t^2\,dt, \quad {\Psi}_{T} = 0.
\end{equation}
In the example of Proposition \ref{propnosol} $\lambda_t^2$ is of the form $\frac{\pi^2}{4(-q)}\frac{1}{T-t}\, \mathbf{1}_{]\!]T/2,\tau]\!]}(t,\cdot)$, while in Subsection \ref{CountBound2} $\lambda^{2}_{t}$ equals $\frac{\pi^2\alpha^{2}}{4(-q)} \frac{1}{T-t}\,  \mathbf{1}_{]\!]T/2,\tau]\!]}(t,\cdot)$, which we modified to $\frac{\pi^2}{4(-q)} \frac{1}{T-t}\, \mathbf{1}_{]\!]T/2,\tau\wedge\sigma]\!]}(t,\cdot) $ in the above discussion. Table \ref{tab2} shows that by introducing additional random variables in the construction of $\lambda$, the BSDE \eqref{cbsde} becomes solvable for bigger values of $|c|$, but the solution for $|c| \geq 1$ is unbounded. The assertions of Table \ref{tab2} can be deduced from the arguments in the above proofs together with the additional calculation given in \eqref{bddcos} below.
 
\begin{table}[h!]
  \centering 
\begin{tabular}{llccc} \toprule[0.12 em]
& \multirow{2}{*}{\textbf{Form of $\lambda^{2}_{t} \Bigl(\text{up to }\frac{\pi^2}{4(-q)}\Bigr)$}} &\multicolumn{3}{c}{\textbf{Solution to the BSDE \eqref{cbsde}}}\\
   & &$ |c| \in [0,1)$ & $|c| = 1$ & $|c| > 1$ \\
\midrule[0.12 em]
 \textbf{Example from \ref{SecExpMom}} & $  \frac{1}{T-t} \,\mathbf{1}_{]\!]T/2,\tau]\!]}(t,\cdot) $& bounded &\multicolumn{2}{c}{no solution}  \\ \midrule[0.1 em]
 \textbf{Example from \ref{CountBound2}}   & $\alpha^{2}  \frac{1}{T-t}\, \mathbf{1}_{]\!]T/2,\tau]\!]} (t,\cdot)$  & bounded & unbounded & no solution  \\
\midrule[0.1 em]
\textbf{Example from \ref{CountBound3}} & $ \frac{1}{T-t}\,  \mathbf{1}_{]\!]T/2,\tau\wedge\sigma]\!]}(t,\cdot) $ & bounded &\multicolumn{2}{c}{unbounded}\\\bottomrule[0.12 em]
\end{tabular}
\vspace{2mm}
  \caption{Description of solutions to the BSDE \eqref{cbsde}} \label{tab2} 
\end{table}
\end{rmk}
\begin{rmk}
In the Subsections \ref{SecExpMom}, \ref{CountBound2} and above we constructed several counterexamples to the boundedness of BSDE solutions. Such examples can also be given in Markovian form using Az\'{e}ma-Yor martingales. More precisely, for $t\in[0,T)$ let \[X_t:=\int_0^t\frac{1}{\sqrt{T-s}}\,\mathbf{1}_{(T/2,T]}(s)\,dW_s\] and $\tau$ as in (\ref{stoptau}), noting that $\tau=\inf\bigl\{ t \geq0\,\big|\, |X_t|\geq1 \bigr\}$. If $\underline{X}$ ($\ol{X}$) denotes the running minimum (maximum) of $X$, then \[U:=-X\underline{X}+\frac{1}{2}\,\underline{X}^2\quad\text{ and }\quad V:=X\ol{X}-\frac12\,\ol{X}^2 \] are continuous local martingales on $[0,T)$ by Az\'{e}ma and Yor \cite{AY79}. To close the continuity gap at $t=T$ consider $\check{\tau}:=\inf\bigl\{ t \geq0\,\big|\, |X_t|\geq2 \bigr\}$ and set $M:=(X^{\check{\tau}},U^{\check{\tau}},V^{\check{\tau}})$. A calculation then shows that \[\mathbf{1}_{[\![0,\tau]\!]}= \mathbf{1}_{\{g_1(M^1,M^2)\geq-1\}}\mathbf{1}_{\{g_2(M^1,M^3)\leq 1\}}\]for \[g_1(x,u):=\bigl(x-\sqrt{x^2+2u}\bigr)\mathbf{1}_{\{x^2\geq-2u\}}\quad\text{ and }\quad g_2(x,v):=\bigl(x+\sqrt{x^2-2v}\bigr)\mathbf{1}_{\{x^2\geq2v\}}.\]
For the analogue of Proposition \ref{propnosol} we could now take the three-dimensional $M$ from above, but actually the one-dimensional local martingale $M^1$ turns out to be sufficient. We obtain that for every $q<0$ there exists a predictable process $\lambda$ which is a function of $M^1$ such that $\lambda\cdot M^1$ is a bounded martingale satisfying $\E\bigl[ \mc{E}(- \lambda \cdot M^1)_T^q \bigr] = + \infty.$ Indeed, $\lambda:=\frac{\pi}{2\sqrt{-q}}\,\mathbf{1}_{\{|M^1|\leq1\}}$ gives the claim. For the analogues of Proposition 5.1 and Theorem 5.2 we set $M^4:=W\mathbf{1}_{[0,T/2]}+W_{T/2}\mathbf{1}_{(T/2,T]}$ and $\lambda=(\lambda^1,0,0,0)$, where \[\lambda^1:=\frac{\pi\alpha}{2\sqrt{-q}}\,\mathbf{1}_{[\![0,\tau]\!]}\quad\text{ or }\quad\lambda^1:=\frac{\pi}{2\sqrt{-q}}\,\mathbf{1}_{[\![0,\tau\wedge \sigma]\!]},\] which again prove to be Markovian in $M$ and for which the statements of the cited results remain valid.
\end{rmk}


\section{Characterization of Boundedness of BSDE Solutions}\label{sec6}
We have already shown that for a BMO martingale $\lambda\cdot M$ and $q\in[0,1)$ the BSDE \eqref{BSDE} allows for a unique bounded solution. In the previous section we gave some examples to show that for $q<0$ the situation is different. In this section we complete the analysis by developing a sufficient condition that guarantees (necessarily unique) bounded solutions to \eqref{BSDE}. It is also shown that this particular condition cannot be improved. More precisely, we consider here a more general situation where $(\mc{F}_t)_{t\in[0,T]}$ is not necessarily a continuous filtration, but only a filtration satisfying the usual conditions. We assume that the local martingale $M$ is still continuous. In this case the BSDE \eqref{BSDE} is replaced by 
\begin{multline}
d\Psi_t= Z_t^\tr\,dM_t + dN_t -\frac{1}{2}\,d\lo N^{c},N^{c}\ro_t+ \log(1+\Delta N_{t}) - \Delta N_{t}\label{BSDE2} \\+\frac{q}{2}(Z_t+\lambda_t)^\tr\,d\lo
M,M\ro_t(Z_t+\lambda_t)-\frac{1}{2}\,Z_t^\tr\,d\lo M,M\ro_tZ_t,\quad\Psi_T=0.
\end{multline}
We mention that all the results which depend only on the specific continuous local martingale $M$ also hold in this more general setting. In particular, the statements of \cite{Nu109} Proposition 4.5 and our Corollary \ref{cornew} continue to hold for the BSDE \eqref{BSDE2} in place of the BSDE \eqref{BSDE}.

\subsection{The critical exponent of a BMO martingale}

We will see that the boundedness of BSDE solutions depends crucially on the so-called critical exponent of the market price of risk. After defining the critical exponent of a general BMO martingale, we give some properties which will be exploited later. We then explain for a general BSDE how the critical exponent is related to boundedness. In addition to the counterexamples in Section \ref{SecCountBound}, this gives a  motivation for our main result, Theorem  \ref{dem}, about how to characterize bounded solutions.

We recall the John-Nirenberg inequality for the convenience of the reader. In what follows $\ol{M}$ is an arbitrary continuous martingale on $[0,T]$ with $\ol{M}_0=0$.
\begin{lem}[Kazamaki \cite{Ka94} Theorem 2.2] \label{KaJohnNirenL}If $\big\|\ol{M}\big\|_{\mathrm{BMO}_2}<1$ then for every stopping time $\tau$ valued in $[0,T]$
\begin{equation}\label{KaJohnNiren}
\E\Big[\exp\Bigl(\lo \ol{M},\ol{M}\ro_T-\lo \ol{M},\ol{M}\ro_\tau\Bigr)\Big|\,\mc{F}_\tau\Big]\leq\frac{1}{1-\big\|\ol{M}\big\|_{\mathrm{BMO}_2}^2}.
\end{equation}
\end{lem}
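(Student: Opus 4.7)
The plan is to reduce the exponential bound to a bound on iterated moments of the increment $\lo \ol{M},\ol{M}\ro_T-\lo \ol{M},\ol{M}\ro_\tau$, using nothing more than the Taylor expansion of the exponential combined with conditional monotone/Fubini. So the first step is to write, for any stopping time $\tau$ valued in $[0,T]$,
\[
\E\Bigl[\exp\bigl(\lo \ol{M},\ol{M}\ro_T-\lo \ol{M},\ol{M}\ro_\tau\bigr)\Big|\,\mc{F}_\tau\Bigr]
=\sum_{n=0}^{\infty}\frac{1}{n!}\,\E\Bigl[\bigl(\lo \ol{M},\ol{M}\ro_T-\lo \ol{M},\ol{M}\ro_\tau\bigr)^n\Big|\,\mc{F}_\tau\Bigr]
\]
and then establish, by induction on $n$, the inequality
\[
\E\Bigl[\bigl(\lo \ol{M},\ol{M}\ro_T-\lo \ol{M},\ol{M}\ro_\tau\bigr)^n\Big|\,\mc{F}_\tau\Bigr]\leq n!\,\bigl\|\ol{M}\bigr\|_{\mathrm{BMO}_2}^{2n}\quad\text{a.s.}
\]
Given this bound, summing the geometric series $\sum_{n\geq 0}\|\ol{M}\|_{\mathrm{BMO}_2}^{2n}=(1-\|\ol{M}\|_{\mathrm{BMO}_2}^{2})^{-1}$, which converges precisely because of the hypothesis $\|\ol{M}\|_{\mathrm{BMO}_2}<1$, delivers \eqref{KaJohnNiren} immediately.

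The technical core is therefore the iterated moment estimate, a version of the Garsia--Neveu lemma. I would prove it by writing, for the nondecreasing process $A:=\lo \ol{M},\ol{M}\ro$ and the shorthand $\Delta A:=A_T-A_\tau$, the identity
\[
(\Delta A)^n = n\int_\tau^T (A_T-A_s)^{n-1}\,dA_s,
\]
and then conditioning on $\mc{F}_\tau$ and applying the tower property with $\mc{F}_s$:
\[
\E\bigl[(\Delta A)^n\big|\mc{F}_\tau\bigr]
= n\,\E\!\left[\int_\tau^T \E\bigl[(A_T-A_s)^{n-1}\big|\mc{F}_s\bigr]\,dA_s\,\Big|\,\mc{F}_\tau\right].
\]
By the inductive hypothesis applied at the stopping time $s$, the inner conditional expectation is dominated by $(n-1)!\,\|\ol{M}\|_{\mathrm{BMO}_2}^{2(n-1)}$, which is nonrandom. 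Pulling this constant out leaves $n\cdot(n-1)!\,\|\ol{M}\|_{\mathrm{BMO}_2}^{2(n-1)}\,\E[\Delta A\,|\,\mc{F}_\tau]$, and by the very definition of the $\mathrm{BMO}_2$ norm this last conditional expectation is bounded by $\|\ol{M}\|_{\mathrm{BMO}_2}^{2}$, completing the induction step. The base case $n=0$ is trivial, and the case $n=1$ is the definition of the norm.

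The main obstacle is a mild one of rigour rather than substance: justifying the use of Fubini/tower property when $\tau$ is replaced by a deterministic time inside the integral $\int_\tau^T(\cdot)\,dA_s$, i.e.\ verifying that the bound $\E[A_T-A_s|\mc{F}_s]\leq\|\ol{M}\|_{\mathrm{BMO}_2}^{2}$ holds not only for stopping times but $dA_s$-a.e.\ for deterministic $s$ as well. This follows from the standard observation that the $\mathrm{BMO}$ inequality extends from stopping times to arbitrary predictable times via a regularization argument on a countable dense set, together with right-continuity of the filtration. Once this is in place, the two displayed estimates above chain together and the Taylor sum yields the stated bound $(1-\|\ol{M}\|_{\mathrm{BMO}_2}^2)^{-1}$.
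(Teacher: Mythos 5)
Your proof is correct and is essentially the classical argument behind the result the paper simply cites (Kazamaki's Theorem 2.2): the energy inequality $\E\bigl[(\lo \ol{M},\ol{M}\ro_T-\lo \ol{M},\ol{M}\ro_\tau)^n\,\big|\,\mc{F}_\tau\bigr]\leq n!\,\|\ol{M}\|_{\mathrm{BMO}_2}^{2n}$ proved by induction via the projection of $(A_T-A_s)^{n-1}$ onto $\mc{F}_s$, followed by summing the geometric series. The only points worth tightening are the passage from the paper's definition of the norm via $\E[(\ol{M}_T-\ol{M}_\tau)^2|\mc{F}_\tau]$ to the quadratic-variation increment (standard for square-integrable martingales) and the measurability issue you already flag, which is cleanest handled by the optional projection together with the section theorem.
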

Using the definition of \cite{Ka94} and the terminology of Schachermayer \cite{Sch96}, we define the \emph{critical exponent} $b$ via
\begin{equation}\label{defb}
b(\ol{M}):=\sup\!\left\{b\geq0\,\bigg|\,\sup_{\tau}\Big\|\E\Big[\exp\!\Big(b\bigl(\lo \ol{M},\ol{M}\ro_T-\lo \ol{M},\ol{M}\ro_\tau\bigr)\Big)\,\Big|\,\mc{F}_\tau\Big]\Big\|_{L^\infty}<+\infty\right\},
\end{equation}
where the supremum inside the brackets is over all stopping times $\tau$ valued in $[0,T]$. We refer to this inner supremum as a \emph{dynamic exponential moment} of $\langle \ol{M},\ol{M}\rangle$ of order $b$. A consequence of Lemma \ref{KaJohnNirenL} is then that a martingale $\ol{M}$ is a BMO martingale if and only if $b(\ol{M})>0$. In addition, the following lemma shows that the supremum in \eqref{defb} is never attained.
\begin{lem} \label{lemb}
Let $k> 0$ and $\ol{M}$ be a continuous martingale with
\begin{equation} \label{condlemb}
\sup_{\tau}\Big\|\E\Big[\exp\!\Big(k\bigl(\lo \ol{M},\ol{M}\ro_T-\lo \ol{M},\ol{M}\ro_\tau\bigr)\Big)\,\Big|\,\mc{F}_\tau\Big]\Big\|_{L^\infty}<+\infty.
\end{equation}
Then there exists $\tilde{k} > k$ such that 
\[\sup_{\tau}\Big\|\E\Big[\exp\!\Big(\tilde{k}\bigl(\lo \ol{M},\ol{M}\ro_T-\lo \ol{M},\ol{M}\ro_\tau\bigr)\Big)\,\Big|\,\mc{F}_\tau\Big]\Big\|_{L^\infty}<+\infty\]
and hence $b\bigl(\ol{M}\bigr) > k$.

\end{lem}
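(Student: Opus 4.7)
\begin{pf}[Proof plan for Lemma \ref{lemb}]
The plan is to combine a stopping-time iteration with the John-Nirenberg inequality (Lemma \ref{KaJohnNirenL}) to show that the dynamic exponential moment condition is self-improving. Set
\[
C:=\sup_{\tau}\Big\|\E\Big[\exp\!\Big(k\bigl(\lo \ol{M},\ol{M}\ro_T-\lo \ol{M},\ol{M}\ro_\tau\bigr)\Big)\,\Big|\,\mc{F}_\tau\Big]\Big\|_{L^\infty}<+\infty,
\]
which is finite by hypothesis. Conditional Markov then gives, for every stopping time $\tau$ valued in $[0,T]$ and every $\lambda>0$,
\[
\mb{P}\bigl(\lo\ol{M},\ol{M}\ro_T-\lo\ol{M},\ol{M}\ro_\tau\geq\lambda\,\big|\,\mc{F}_\tau\bigr)\leq Ce^{-k\lambda}.
\]
In particular, applying Jensen's inequality to the hypothesis yields $\|\ol{M}\|_{\mr{BMO}_2}^2\leq(\log C)/k$, so $\ol{M}$ is automatically a BMO martingale.

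Next, fix $\lambda_0>0$ (to be tuned) and, starting from an arbitrary stopping time $\tau$, iteratively define $\sigma_0:=\tau$ and $\sigma_{n+1}:=\inf\{t\geq\sigma_n:\lo\ol{M},\ol{M}\ro_t-\lo\ol{M},\ol{M}\ro_{\sigma_n}\geq\lambda_0\}\wedge T$. By continuity of $\lo\ol{M},\ol{M}\ro$, on $\{\sigma_n<T\}$ one has $\lo\ol{M},\ol{M}\ro_{\sigma_n}-\lo\ol{M},\ol{M}\ro_\tau=n\lambda_0$. Applying the tail estimate at stopping time $\sigma_{n-1}$ conditional on $\mc{F}_{\sigma_{n-1}}$ and iterating via the tower property yields the geometric decay $\mb{P}(\sigma_n<T\,|\,\mc{F}_\tau)\leq(Ce^{-k\lambda_0})^n$, valid whenever $\lambda_0>\log C/k$.

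For any candidate $\tilde{k}>k$, I would then decompose the target quantity over the partition $\{\sigma_n<T=\sigma_{n+1}\}_{n\geq 0}$: on this event $\lo\ol{M},\ol{M}\ro_T-\lo\ol{M},\ol{M}\ro_\tau\leq(n+1)\lambda_0$, and a combination of conditional Markov with the bound on $\mb{P}(\sigma_n<T\,|\,\mc{F}_\tau)$ yields
\[
\E\Big[\exp\!\Big(\tilde{k}\bigl(\lo\ol{M},\ol{M}\ro_T-\lo\ol{M},\ol{M}\ro_\tau\bigr)\Big)\,\Big|\,\mc{F}_\tau\Big]\leq Ce^{(\tilde{k}-k)\lambda_0}\sum_{n\geq 0}\bigl(Ce^{(\tilde{k}-k)\lambda_0}\bigr)^n.
\]

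The main obstacle, and the delicate point of the proof, is to close the tuning: the naive iteration only yields a convergent series under $Ce^{(\tilde{k}-k)\lambda_0}<1$, which for generic $C\geq1$ does not permit $\tilde{k}>k$. The self-improvement must therefore be extracted by exploiting that the hypothesis holds uniformly over all stopping times, not only at $\tau$. I would incorporate Lemma \ref{KaJohnNirenL} applied to the scaled BMO martingale $s\ol{M}$ for $s$ with $s\|\ol{M}\|_{\mr{BMO}_2}<1$: this yields an improved tail bound of the form $\mb{P}(\lo\ol{M},\ol{M}\ro_T-\lo\ol{M},\ol{M}\ro_\tau\geq\lambda\,|\,\mc{F}_\tau)\leq C''e^{-k''\lambda}$ with exponent $k''$ that can be compared to $k$, and bootstrap by running the iteration argument with this refined decay rate. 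The existence of $\tilde{k}>k$ for which the resulting geometric series converges is then secured by choosing $\lambda_0$ in terms of $C$, $k$ and the BMO norm, and the conclusion $b(\ol{M})>k$ follows immediately from the definition \eqref{defb}.
\end{pf}
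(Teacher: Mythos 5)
There is a genuine gap, and it lies exactly at the point you flag as ``the delicate point.'' Your iteration reduces the hypothesis to the conditional tail bound $\mb{P}\bigl(\lo\ol{M},\ol{M}\ro_T-\lo\ol{M},\ol{M}\ro_\tau\geq\lambda\,\big|\,\mc{F}_\tau\bigr)\leq Ce^{-k\lambda}$ via Markov, but this step irretrievably loses the information needed for self-improvement: a tail of rate exactly $k$ is compatible with $\E[e^{\tilde{k}\Delta}]=+\infty$ for every $\tilde{k}\geq k$ (consider $\Delta$ with density proportional to $e^{-k\lambda}$), so no amount of slicing and re-summing tail estimates can produce an exponent strictly larger than $k$ --- as your own computation $Ce^{(\tilde{k}-k)\lambda_0}<1$ confirms, since $C\geq 1$ forces $\tilde{k}<k$. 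The proposed rescue via Lemma \ref{KaJohnNirenL} applied to $s\ol{M}$ does not close this: John--Nirenberg yields dynamic exponential moments only up to order $1/\|\ol{M}\|_{\mathrm{BMO}_2}^2$, and your own Jensen estimate gives merely $\|\ol{M}\|_{\mathrm{BMO}_2}^2\leq(\log C)/k$, so the ``refined'' rate $k''$ is at best $k/\log C$, which is \emph{smaller} than $k$ whenever $C>e$. Feeding a rate $k''<k$ back into the iteration cannot produce a rate exceeding $k$, so the bootstrap has no mechanism to terminate above $k$.

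The missing idea is a reverse-H\"{o}lder self-improvement, i.e.\ Gehring's inequality, which is how the paper proceeds. Fix $\tau$, set $\Gamma_t:=\exp\bigl(k(\lo\ol{M},\ol{M}\ro_t-\lo\ol{M},\ol{M}\ro_\tau)\bigr)$ and $\tau_\mu:=\inf\{t\in[\![\tau,T]\!]\,|\,\Gamma_t>\mu\}$ for $\mu>1$. Continuity gives the \emph{exact} crossing identity $\Gamma_{\tau_\mu}=\mu$ on $\{\tau_\mu<+\infty\}=\{\Gamma_T>\mu\}$, and the hypothesis applied at the stopping time $\tau_\mu$ (not just at $\tau$) yields
\[
\E\bigl[\mathbf{1}_{\{\Gamma_T>\mu\}}\Gamma_T\bigr]\leq c_k\,\E\bigl[\mathbf{1}_{\{\Gamma_T>\mu\}}\Gamma_{\tau_\mu}\bigr]\leq c_k\,\mu^{\eps}\,\E\bigl[\mathbf{1}_{\{\Gamma_T>\mu\}}\Gamma_T^{1-\eps}\bigr],
\]
which is Gehring's condition; it retains the full conditional-expectation information rather than only tails, and its conclusion is precisely an $r>1$ with $\E[\Gamma_T^r]\leq C\,\E[\Gamma_T]^r$, conditionalized by restricting to $A\in\mc{F}_\tau$ and applying Jensen, whence $\tilde{k}=rk$. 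Without this (or an equivalent reverse-H\"{o}lder/Muckenhoupt self-improvement), your outline cannot be completed.
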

\begin{proof}
Inspired by \cite{Ka94} Corollary 3.2 we aim to apply Gehring's inequality. To this end, fix a stopping time $\tau$ and set $\Gamma_{t} := \exp\!\Big(k\bigl(\lo \ol{M},\ol{M}\ro_t-\lo \ol{M},\ol{M}\ro_\tau\bigr)\Big)$ for $t \in [0,T]$. For each $\mu > 1$, we then define the stopping time $\tau_{\mu} := \inf\{t\in[\![\tau,T]\!]\,|\,\Gamma_{t} > \mu \}$. It follows from $\mu > 1$ and the continuity of $\Gamma$ that
\begin{equation} \label{subsetb}
\Gamma_{\tau_{\mu}} = \mu\quad\textrm{on}\quad\{\tau_{\mu} < +\infty\}.
\end{equation}
Since $\Gamma$ is nondecreasing, we have that $\{ \Gamma_{T} > \mu\} = \{ \tau_{\mu} < +\infty \}$ and this event is $\mc{F}_{\tau_{\mu}}$-measurable. Therefore, we obtain
\begin{align*}
\E\bigl[ \mathbf{1}_{\{ \Gamma_{T} > \mu\}} \Gamma_{T}  \bigr]  & = \E\bigl[ \mathbf{1}_{\{ \Gamma_{T} > \mu\}}  \E[ \Gamma_{T}| \mc{F}_{\tau_{\mu}}]  \bigr]\\  &  =  \E\biggl[ \mathbf{1}_{\{ \Gamma_{T} > \mu\} } \Gamma_{\tau_{\mu}} \E\Big[ \exp\!\Big(k\bigl(\lo \ol{M},\ol{M}\ro_{T }-\lo \ol{M},\ol{M}\ro_{\tau_{\mu} }\bigr)\Big)\Big| \mc{F}_{\tau_{\mu}}\Big]  \biggr] \\ & \leq c_k\, \E\bigl[ \mathbf{1}_{\{ \Gamma_{T} > \mu\} } \Gamma_{\tau_{\mu}} \bigr],
\end{align*}
where we used \eqref{condlemb} and denoted its left-hand side by $c_k$. Fix now $ \eps \in (0,1)$. Using \eqref{subsetb}, we derive
\[\E\bigl[ \mathbf{1}_{\{ \Gamma_{T} > \mu\} } \Gamma_{\tau_{\mu}}\bigr] = \mu^{\eps}  \E\bigl[ \mathbf{1}_{ \{ \Gamma_{T} >   \mu\}} \Gamma_{\tau_{\mu}}^{1- \eps} \bigr]\leq \mu^{\eps}  \E\bigl[ \mathbf{1}_{ \{ \Gamma_{T} >  \mu\}}  \Gamma_{T}^{1- \eps}\bigr]\]
and conclude that \[\E\bigl[ \mathbf{1}_{\{ \Gamma_{T} > \mu\}} \Gamma_{T}  \bigr] \leq c_k\, \mu^{\eps}\,  \E\bigl[ \mathbf{1}_{ \{ \Gamma_{T} >  \mu\}} \Gamma_{T}^{1- \eps} \bigr].\]
It follows from the probabilistic version of Gehring's inequality given in \cite{Ka94} Theorem 3.5, however see Remark \ref{rmkGehring} below, that there exist $r > 1$ and $C > 0$ (depending on $\eps$ and $c_k$ only) such that
\[\E\bigl[  \Gamma_{T}^{r}  \bigr] \leq C \, \E  [  \Gamma_{T}]^{r}.\]
To obtain the conditional version, we take $A \in \mc{F}_{\tau}$ and derive from the same argument and Jensen's inequality that
\[\E\bigl[  \Gamma_{T}^{r}\mathbf{1}_{A}  \bigr] \leq C \, \E  [  \Gamma_{T}\mathbf{1}_{A}  ]^{r} \leq C \, \E  \bigl[  \E [\Gamma_{T}| \mc{F}_{\tau}]^{r}  \mathbf{1}_{A}  \bigr] \]
so that \[\E\bigl[  \Gamma_{T}^{r} | \mc{F}_{\tau}] \leq C \, \E  [  \Gamma_{T}| \mc{F}_{\tau}]^{r}\leq  c_k^{r}\,C \quad \textrm{a.s}.\]
Since this holds for any stopping time $\tau$, we conclude the proof by setting $\tilde{k} = r k$.
\end{proof}

We present another auxiliary result that will be applied in the next subsection.

\begin{lem} \label{lemholder}
Let $q < 0$ and $\ol{M}$ be a continuous BMO martingale such that the reverse H\"{o}lder inequality \eqref{reversHolder} holds for $ \mc{E}(\ol{M})$. Then there exists $\tilde{q} < q$ such that $ \mc{E}(\ol{M})$ satisfies the reverse H\"{o}lder inequality \eqref{reversHolder} with $\tilde{q}$.
\end{lem}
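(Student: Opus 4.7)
The plan is to reduce the statement to the classical openness of the reverse H\"{o}lder inequality at positive exponents through a measure-change duality. Since $\ol{M}$ is a continuous BMO martingale, \cite{Ka94} Theorem 2.3 ensures that $\mc{E}(\ol{M})$ is a uniformly integrable $\mb{P}$-martingale, so I would introduce the probability $\mb{Q}$ through $d\mb{Q}/d\mb{P} := \mc{E}(\ol{M})_T$. Girsanov's theorem together with \cite{Ka94} Theorem 3.6 then shows that $\ol{M}^{\mb{Q}} := \ol{M} - \lo \ol{M},\ol{M}\ro$ is a continuous BMO martingale under $\mb{Q}$, whose stochastic exponential $\mc{E}(-\ol{M}^{\mb{Q}})$ coincides with $1/\mc{E}(\ol{M})$ and serves as the density process of $\mb{P}$ with respect to $\mb{Q}$.

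The next step is the key duality. Using the Bayes formula together with the identity $\mc{E}(-\ol{M}^{\mb{Q}})_{\tau,T}=1/\mc{E}(\ol{M})_{\tau,T}$, a short computation would yield, for every stopping time $\tau$ valued in $[0,T]$,
\begin{equation*}
\E^{\mb{Q}}\bigl[\mc{E}(-\ol{M}^{\mb{Q}})_{\tau,T}^{1-q}\,\big|\,\mc{F}_\tau\bigr] \;=\; \E^{\mb{P}}\bigl[\mc{E}(\ol{M})_{\tau,T}^{q}\,\big|\,\mc{F}_\tau\bigr] \;\leq\; c_{\textit{\tiny{rH}},p}.
\end{equation*}
Hence the hypothesis that $\mc{E}(\ol{M})$ satisfies the reverse H\"{o}lder inequality at the negative exponent $q$ under $\mb{P}$ is exactly equivalent to $\mc{E}(-\ol{M}^{\mb{Q}})$ satisfying its counterpart at the positive exponent $p':=1-q>1$ under $\mb{Q}$.

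Finally, I would invoke the classical openness of the reverse H\"{o}lder inequality at exponents above one for stochastic exponentials of BMO martingales (see \cite{Ka94} Theorem 3.3 and its consequences), which furnishes some $p''>1-q$ such that $\mc{E}(-\ol{M}^{\mb{Q}})$ also satisfies the reverse H\"{o}lder inequality at exponent $p''$ under $\mb{Q}$. Setting $\tilde{q} := 1-p''<q$ and reversing the duality identity produces $\E^{\mb{P}}\bigl[\mc{E}(\ol{M})_{\tau,T}^{\tilde{q}}\,\big|\,\mc{F}_\tau\bigr] \leq C$ uniformly in $\tau$, which is \eqref{reversHolder} at the strictly smaller exponent $\tilde{q}$. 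The principal obstacle is this openness statement at positive exponents: while classical, its core is a Gehring-type inequality entirely analogous to the one used in Lemma \ref{lemb} above, now applied to the positive random variable $\mc{E}(-\ol{M}^{\mb{Q}})_{\tau,T}$ rather than to an exponential of quadratic variation.
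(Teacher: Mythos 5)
Your argument is correct, but it takes a different route from the paper's. The paper disposes of the lemma in two lines: it observes that the reverse H\"{o}lder inequality $R_q$ for $q<0$ is by definition the Muckenhoupt inequality $A_{\varrho}$ with $\varrho=1-1/q>1$ for $\mc{E}(\ol{M})$, and then cites the openness of the $A_{\varrho}$ condition for exponentials of BMO martingales (\cite{Ka94} Corollary 3.3). You instead pass to the measure $\mb{Q}$ with density $\mc{E}(\ol{M})_T$, use the Bayes identity $\E^{\mb{Q}}\bigl[\mc{E}(-\ol{M}^{\mb{Q}})_{\tau,T}^{1-q}\,\big|\,\mc{F}_\tau\bigr]=\E\bigl[\mc{E}(\ol{M})_{\tau,T}^{q}\,\big|\,\mc{F}_\tau\bigr]$ to convert $R_q$ at the negative exponent $q$ under $\mb{P}$ into the reverse H\"{o}lder inequality at the positive exponent $1-q>1$ for $\mc{E}(-\ol{M}^{\mb{Q}})$ under $\mb{Q}$, and then invoke openness above exponent one. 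All the individual steps check out: $\mc{E}(\ol{M})$ is a uniformly integrable martingale by \cite{Ka94} Theorem 2.3, $\ol{M}^{\mb{Q}}$ is BMO under $\mb{Q}$ by \cite{Ka94} Theorem 3.6, the identity $\mc{E}(-\ol{M}^{\mb{Q}})=1/\mc{E}(\ol{M})$ is elementary, and the exponent bookkeeping $\tilde{q}=1-p''<q$ is right. Since Kazamaki's openness of $A_{\varrho}$ is itself proved by essentially this duality plus a Gehring argument, the two proofs rest on the same self-improvement mechanism; what your version buys is transparency, as it makes the measure change and the Gehring step explicit and connects them to the argument already spelled out in Lemma \ref{lemb} and to the gap discussed in Remark \ref{rmkGehring} (which affects the cited openness results and must be patched in either route). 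The price is length; the paper's reduction to $A_{\varrho}$ is the shorter path to the same citation in \cite{Ka94}.
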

\begin{proof} We note that the reverse H\"{o}lder inequality $R_q$ for $q < 0$ is equivalent to the Muckenhoupt inequality $A_{\varrho}$ with $\varrho = 1 - 1/q > 1$, see \cite{Ka94} Definition 2.2. Therefore, the statement of Lemma \ref{lemholder} follows from \cite{Ka94} Corollary 3.3.
\end{proof}
\begin{rmk}\label{rmkGehring}
We mention that in the formulation of \cite{Ka94} Theorem 3.5 as well as the proof of \cite{Ka94} Corollary 3.2 there is a small gap which can be easily filled. Namely, for a nonnegative random variable $U$ and positive constants $K$, $\beta$ and $\eps\in(0,1)$ the author requires Gehring's condition
\begin{equation} \label{condgehring}
\E\bigl[ \mathbf{1}_{\{ U > \mu\}} U  \bigr] \leq K \mu^{\eps}\,  \E\bigl[ \mathbf{1}_{ \{ U  > \beta \mu\}} U^{1- \epsilon}  \bigr]
\end{equation}
to hold for \emph{all} $\mu > 0$, which cannot be satisfied for $U\in L^1$ unless $U=0$ a.s. This is because for an integrable $U \neq 0$ the right-hand side tends to zero as $\mu \downarrow 0$ whereas the left-hand side tends to $\E[U] > 0$. However, an inspection of the proof reveals that \eqref{condgehring} is needed only for $\mu > \E[U]$, i.e. \cite{Ka94} Theorem 3.5 should be stated for $\mu > \E[U]$ instead of $\mu > 0 $. If this is the case it then can be applied in the proof of \cite{Ka94} Corollary 3.2, where for $\mu > 0$ the following stopping time is considered, $\tau_{\mu} = \inf\{t\geq0\,|\, \mc{E}(\ol{M})_{t}^{p}> \mu\}$ for a continuous local martingale $\ol{M}$, see also the proof of Lemma \ref{lemb}. Then, the desired estimate $\mc{E}(\ol{M})_{\tau_{\mu}}^{p} \leq \mu$ is derived, but the latter holds for $\mu\geq1$ only, since for $\mu\in(0,1)$ we obtain that $\tau_\mu=0$ which in turn gives $\mc{E}(\ol{M})_{\tau_{\mu}}^{p}=\mc{E}(\ol{M})_{0}^{p}=1>\mu$.
\end{rmk}
 To illustrate how dynamic exponential moments lead to boundedness and help motivate our next theorem, consider the following BSDE in a continuous filtration
\begin{equation} \label{BSDEF3}
d\Psi_t= Z_t^\tr\,dM_t + dN_t -F(t,Z_t)\,dA_t-\frac{1}{2}\,d\lo N,N\ro_t ,\quad \Psi_T=\xi;
\end{equation}
where $\xi$ is a bounded random variable. We assume that the driver  $F$ is continuous and satisfies
\begin{align*}
|F(t,   z)| & \leq    \| B_{t}\eta_{t}\|^{2}+\frac{\gamma}{2} \| B_{t}z\|^{2},\\
 |F(t,   z_{1}) - F(t,z_{2})|  &    \leq  \beta_F \bigl(  \| B_{t}\eta_{t}\| +  \| B_{t}z_{1}\| +  \| B_{t}z_{2}\| \big)  \| B_{t}(z_{1}-z_{2})\|
\end{align*}
 for all $t \in [0,T]$ and $z,z_{1},z_{2} \in \mathbb{R}^{d}$, where $\beta_F, \gamma$ are constants and $\eta\cdot M$ is a BMO martingale such that $\E \big[\exp\big(  \tilde{\gamma}  \int_{0}^{T}    \| B_t \eta_{t} \|^2\, d A_{t}  \big) \big] <+ \infty$ for  $\tilde{\gamma} :=  \max\{1,\gamma\}$.

From \cite{MW10} Theorem 4.1 (noting that convexity of $F$ in $z$ is not needed by \cite{MW10} Remark~4.3), we obtain that there exists a solution $(\Psi,Z,N)$ to \eqref{BSDEF3} which satisfies
\[
|\Psi_{t}|    \leq    \| \xi \|_{L^{\infty}}  +  \frac{1}{\tilde{\gamma} } \log \E \bigg[\exp\bigg(  \tilde{\gamma}  \int_{t}^{T}    \| B_s \eta_{s} \|^2\, d A_{s}  \bigg) \bigg| \,\mathcal{F}_{t}\bigg].
\]
We see immediately that if $b(\eta \cdot M)   >  \tilde{\gamma}$, then $\Psi$ is bounded. Applying this to the specific BSDE \eqref{BSDE} related to power utility maximization for $q < 0$ and using Lemma \ref{PropF} (ii) in the appendix, we obtain that the solution to \eqref{BSDE} has a bounded first component if $b\Big( \sqrt{\frac{q(q-\epsilon_{0})}{2 \epsilon_{0} }}\,\lambda \cdot M \Big) > 1- q + \epsilon_{0}$ or, equivalently,
\[b(\lambda \cdot M)   >  \frac{1}{2}\bigg(\frac{q^{2}(1-q)}{\epsilon_{0}} - q + 2q^{2}-q \epsilon_{0} \bigg) \quad  \textrm{for some } \epsilon_{0} > 0.\] Choosing the minimizing $\epsilon_{0} = \sqrt{-q (1-q )}$, the right hand side equals 
\begin{equation}
k_q:=q^2-\frac{q}{2}-q\sqrt{q^2-q}=\frac{1}{2}\,\Bigl(q-\sqrt{q^2-q}\Bigr)^{2}>0\label{defkq}
\end{equation}
so that $b(\lambda \cdot M)  > k_{q}$ implies existence of a solution with bounded first component.  

Contrary to this result, the specific example of a BMO martingale that does not yield a bounded solution to the BSDE in Subsection \ref{CountBound2} exhibits\[b(\lambda\cdot M)=-\frac{q}{2}<q^2-\frac{q}{2}-q\sqrt{q^2-q}=k_q,\]recalling that $q<0$ (and where the first equality can be shown using \eqref{bddcos} below). The following questions arise.
\begin{itemize}
\item Which boundedness properties do hold for solutions to the BSDE \eqref{BSDE2} for those $\lambda$ with $b(\lambda\cdot M)\in\bigl(-\frac{q}{2},\,k_q\bigr)$?
\item Can we use the critical exponent $b$ to characterize boundedness of solutions to the BSDE \eqref{BSDE2}?
\end{itemize}
We answer these questions in the next subsection by showing that the bound $k_q$ is indeed the minimal one which guarantees boundedness, hence cannot be improved. In doing so we provide a full description of the boundedness of solutions to the quadratic BSDE \eqref{BSDE2} with $\lambda\cdot M$ a BMO martingale in terms of the critical exponent $b$   in a general filtration. 

\subsection{Boundedness under Dynamic Exponential Moments}
We have seen that neither the BMO property of $ \lambda \cdot M$ nor an exponential moments condition guarantees the boundedness of a BSDE solution. While a counterexample showed that a simple combination of the two conditions does not suffice, we next see that a dynamic combination provides the required characterization. In particular, while the existence of all exponential moments of the mean-variance tradeoff is sufficient for the existence of a unique solution $(\hat{\Psi},\hat{Z},\hat{N})$ to \eqref{BSDE} with $\hat{\Psi}\in\mathfrak{E}$, the existence of all \emph{dynamic} exponential moments is sufficient for the existence of a unique solution with $\hat{\Psi}$ bounded, and in general this requirement cannot be dropped. We recall that by Lemma \ref{lemb} any requirement on the dynamic exponential moments may be cast in terms of a condition on the critical exponent $b$.

\begin{thm} \label{dem} Fix $p\in(0,1)$, i.e. $q < 0$, and define $k_{q}$ as in \eqref{defkq}. Then,
\begin{enumerate}
\item If $ \lambda \cdot M$ is a martingale with $b(\lambda\cdot M)>k_q$ then the solution pair $(\hat{X},\hat{Y})$ to the primal and dual problem exists and if $\hat{\Psi}$, $\hat{Z}$ and $\hat{N}$ are as in Theorem \ref{thmuniq}, then the triple $(\hat{\Psi},\hat{Z},\hat{N})$ is the unique solution to the BSDE \eqref{BSDE2} with $\hat{\Psi}$ \emph{bounded}.
\item For a one-dimensional Brownian motion $M = W$ and every $k  < k_{q}$, there exists a BMO martingale $ \lambda \cdot M $ with $b(\lambda\cdot M)>k$ such that the solutions to the primal and dual problem exist and the corresponding triple $(\hat{\Psi},\hat{Z},\hat{N}\equiv0)$ is a solution to the BSDE \eqref{BSDE2} with $\hat{\Psi}$ \emph{unbounded}.
\item For a one-dimensional Brownian motion $M = W$, there exists a BMO martingale $ \lambda \cdot M $ with $b(\lambda\cdot M)=k_q$ such that the solutions to the primal and dual problem exist and the corresponding triple $(\hat{\Psi},\hat{Z},\hat{N}\equiv0)$ is the unique solution to the BSDE \eqref{BSDE2} with $\hat{\Psi}$ \emph{bounded}.
\end{enumerate}
\end{thm}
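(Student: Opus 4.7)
The theorem consists of a sufficiency claim (i) and two sharpness assertions (ii) and (iii); I would address them in this order.

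For (i), my plan is to apply the exponential a priori bound discussed just before the theorem statement to the BSDE \eqref{BSDE2}. Using Lemma \ref{PropF}(ii) in the appendix, the driver admits, for every $\eps_0>0$, the Young-type split
\[|F(t,z)|\leq\frac{1-q+\eps_0}{2}\|B_tz\|^2+\frac{q(q-\eps_0)}{2\eps_0}\|B_t\lambda_t\|^2,\]
so one takes $\tilde\gamma=1-q+\eps_0$ and $\eta_t=\sqrt{q(q-\eps_0)/(2\eps_0)}\,\lambda_t$. A direct computation shows that the function $\eps_0\mapsto\tilde\gamma\cdot q(q-\eps_0)/(2\eps_0)$ attains its minimum at $\eps_0^\star=\sqrt{q^2-q}$ and the minimum equals exactly $k_q$. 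Hence, whenever $b(\lambda\cdot M)>k_q$, Lemma \ref{lemb} and the defining property of the critical exponent produce a uniform upper bound on the conditional expectation in the a priori estimate, yielding a bounded $\Psi$. Existence follows from a Kobylanski--Morlais-type approximation extended to the general-filtration, jump setting as in \cite{Nu209}; the jump adjustment $\log(1+\Delta N)-\Delta N$ of \eqref{BSDE2} is treated via the exponential transform $\exp(\Psi)$ used there. Uniqueness within the bounded class follows as in Corollary \ref{cornew}: boundedness of $\Psi$ forces $Z\cdot M+N$ to be BMO, and the identification with the primal--dual optimizer via Theorem \ref{thmuniq} closes the argument.

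For (ii), fix $k<k_q$. A single-bump ansatz in the spirit of Subsection \ref{CountBound2},
\[\lambda_t=\frac{\pi\alpha}{2\sqrt{c(T-t)}}\,\mathbf{1}_{]\!]T/2,\tau]\!]}(t,\cdot),\]
gives $b(\lambda\cdot W)=c/(2(\esssup\alpha)^{2})$ (via the cosine identity $\E[\exp((\gamma^{2}/2)\!\int_{T/2}^{\tau}\!\frac{ds}{T-s})]=1/\cos(\gamma)$) while the lower bound for $\hat\Psi_{T/2}$ derived as in the proof of Proposition \ref{ThmCountBound2} blows up on the event $\{\alpha^{2}>c/(-q)\}$. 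The two constraints force $(\esssup\alpha)^{2}>c/(-q)$, giving only $b(\lambda\cdot W)<-q/2$, which is insufficient when $k_q>-q/2$. To reach any $k\in[-q/2,k_q)$, my plan is to iterate: partition $[T/2,T]$ into $n$ disjoint subintervals $[T_{j-1},T_j]$, place an independent bump of the above form on each, and exploit the additivity of dynamic exponential moments of $\lo\lambda\cdot M,\lambda\cdot M\ro$ across disjoint predictable intervals. A careful choice of the local amplitudes and of the distribution of the random modulators $\alpha_j$ pushes the critical exponent arbitrarily close to (but strictly below) $k_q$, while the probability that at least one $\alpha_j$ crosses its blow-up threshold stays bounded away from zero, preserving the unboundedness of $\hat\Psi$. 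Optimality of the pair obtained from Theorem \ref{thmuniq} is verified via Proposition \ref{ExplSolW} once condition \eqref{ExpSolWCond} is checked.

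For (iii), the boundary case $b(\lambda\cdot M)=k_q$, my plan is to adapt the construction of Subsection \ref{CountBound3} by tuning the density of the auxiliary random variable $\sigma$ so that $\int_{T/2}^{\tau\wedge\sigma}\frac{ds}{T-s}$ has a dynamic exponential moment of order exactly $k_q$ that is just finite, with higher orders failing. Boundedness of $\hat\Psi$ at this critical parameter then follows from the explicit formula \eqref{hatPsiExpl} and a direct estimation of $\E\bigl[\mc{E}(-\lambda\cdot W)_{t,T}^q\,\big|\,\mc{F}_t\bigr]$ using the finite critical-order moment. The openness results from Lemmas \ref{lemb} and \ref{lemholder} guarantee that neither the exponential moment nor the reverse H\"{o}lder scale can be implicitly improved by the construction, so the equality $b(\lambda\cdot M)=k_q$ is sharp.

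The principal obstacle is (ii): the quantitative sharpness up to the threshold $k_q$ rules out the one-bump examples already in Section \ref{SecCountBound}, and the stacking construction requires a delicate simultaneous control of the critical exponent, of the conditional tail of $\hat\Psi_{T/2}$, and of the validity of condition \eqref{ExpSolWCond}. Once (ii) is in place, the boundary example in (iii) follows from a limiting refinement of the same idea.
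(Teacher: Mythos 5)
Your plan for part (i) reproduces the motivating a priori estimate that precedes the theorem in the paper, but that computation is carried out for the BSDE \eqref{BSDEF3} in a \emph{continuous} filtration, whereas Theorem \ref{dem} is stated for a general filtration and the BSDE \eqref{BSDE2}; moreover you invoke Theorem \ref{thmuniq} to identify the solution with the optimizers, which requires Assumption \ref{ass_expmom} (all exponential moments of the mean-variance tradeoff) and is not available under the mere hypothesis $b(\lambda\cdot M)>k_q$. The paper instead proves the reverse H\"older inequality for $Y^\lambda$ directly, via a sharp H\"older split with exponent $\beta=1-\frac1q\sqrt{q^2-q}$ (this is precisely where $k_q$ is selected), which yields Assumption \ref{ass2} and the finiteness of $u$, and then obtains boundedness and uniqueness through \cite{Nu109} Proposition 4.5 and \cite{Nu209} Corollary 5.6 combined with Lemma \ref{lemholder}. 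That route is what makes the general-filtration statement work.

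The genuine gap is in parts (ii) and (iii). Your stacked-bump construction keeps $\lambda\cdot W$ bounded: each bump is stopped when $\int\frac{dW_s}{\sqrt{T-s}}$ exits $[-1,1]$, and finitely many bounded pieces remain bounded. But if $\lambda\cdot W$ is bounded by $C$ and $b(\lambda\cdot W)>-q/2$, then
\[
\E\bigl[\mc{E}(-\lambda\cdot W)_{t,T}^q\,\big|\,\mc{F}_t\bigr]\leq e^{2|q|C}\,\E\Bigl[\exp\Bigl(-\tfrac{q}{2}\textstyle\int_t^T\lambda_s^2\,ds\Bigr)\Big|\,\mc{F}_t\Bigr]
\]
is uniformly bounded, so the reverse H\"older inequality holds and $\hat\Psi$ is bounded by \cite{Nu109} Proposition 4.5. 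Since $k_q>-q/2$, no construction of this type can deliver $b(\lambda\cdot W)>k$ for $k\in[-q/2,k_q)$ together with an unbounded $\hat\Psi$; the factorization of dynamic exponential moments across disjoint independent intervals only shows that the critical exponent of the concatenation equals the minimum over the pieces, it cannot raise it. The idea you are missing is the one in Lemma \ref{lemcondex}: the stopping time there is defined through the \emph{drifted} integral $\int\frac{1}{\sqrt{T-s}}\bigl(dW_s+\frac{b\pi\alpha}{\sqrt{8(T-s)}}\,ds\bigr)$, so that $\int\ti\lambda\,(dW+b\ti\lambda\,dt)$ is bounded while $\ti\lambda\cdot W$ itself is not; the measure change $d\ti{\mathbb{P}}/d\mathbb{P}=\mc{E}(-b\ti\lambda\cdot W)_T$ then decouples the order of the dynamic exponential moment under $\mathbb{P}$ (governed by $\frac{k}{a^2}-\frac{b^2}{2}$) from the coefficient $\frac{qb}{a}-\frac{q}{2a^2}-\frac{b^2}{2}$ that controls $\E[\mc{E}(-\lambda\cdot W)_{t,T}^q\mid\mc{F}_t]$, and the two free parameters $a,b$ allow one to place the former strictly below the threshold and the latter exactly at it (for (ii)), or vice versa (for (iii)). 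Finally, your plan for (iii) --- a dynamic exponential moment of order exactly $k_q$ that is ``just finite, with higher orders failing'' --- is impossible: Lemma \ref{lemb} shows the supremum in \eqref{defb} is never attained, and in the paper's example for (iii) the dynamic exponential moment of order $k_q$ does \emph{not} exist even though $b(\lambda\cdot W)=k_q$ and $\hat\Psi$ is bounded.
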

We can summarize this result as follows: Item (i) gives a sufficient condition for boundedness of BSDE solutions in terms of dynamic exponential moments, which is less restrictive than a bound on the {BMO}$_2$ norm. Item (ii) shows that this condition is sharp in the sense that it cannot be improved. In particular, the critical exponent $b$ from \eqref{defb} characterizes the boundedness property of solutions to the BSDE \eqref{BSDE2} that stem from the utility maximization problem. Item (iii) gives information about the critical point $k_q$ in the interval $(k_q,+\infty).$ It yields that the converse of item (i) does not hold.

The following Figure \ref{fig_kq} provides a visualization of this discussion, it depicts the value $k_q$ as a function of $p$. Let us now discuss it briefly, fix $p\in(0,1)$ and assume that we are on the critical black line, i.e. we have a specific $\lambda\cdot M$ with $b(\lambda\cdot M)>k_q$. Note that the black line is included in the area that ensures boundedness because a finite dynamic exponential moment of order $k_{q}$ is equivalent to $b(\lambda\cdot M)>k_q$ by Lemma \ref{lemb}. Now choosing $\tilde{q}<0$ such that $b(\lambda\cdot M)>k_{\tilde{q}}>k_q$ we can derive the statement of Theorem \ref{dem} (i) for the corresponding $\tilde{p}>p$. However, $\tilde{p}$ depends on the specific choice of $\lambda$ and therefore, it is not possible to shift the whole black line uniformly for all processes $\lambda$.

\begin{figure}[ht!]
  \centering
  \includegraphics[height=5.5cm]{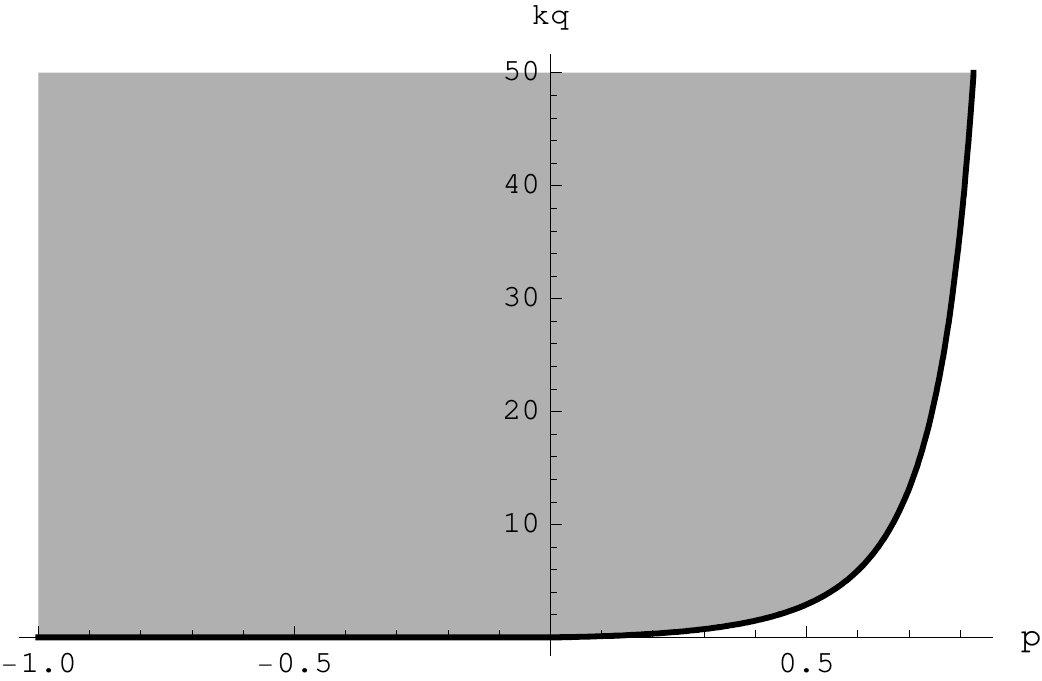}
  \caption{Dynamic exponential moments of $\lo\lambda\cdot M,\lambda\cdot M\ro$ sharply sufficient for the boundedness of $\hat{\Psi}$.}
  \label{fig_kq}
\end{figure}

Items (ii) and (iii) of the above theorem rely on the construction of a specific example which we provide in the following auxiliary lemma. 
\begin{lem}\label{lemcondex}
Let $W$ be a one-dimensional Brownian motion. Then, for every $b\in\mathbb{R}$, there exists a predictable process $\ti{\lambda}$ such that $\ti{\lambda}\cdot W$ is a BMO martingale and 
\begin{equation}\label{condex}
\sup_{\substack{ \tau \mathrm{\,stopping \,time}\\\mathrm{valued \,in\,[0,T]}}} \bigg\|  \E_{\ti{\mathbb{P}}}\biggl[ \exp \biggl( c^{2} \int_{\tau}^{T} \ti{\lambda}_{t}^{2} \,d t \biggr)\bigg| \,\mc{F}_{\tau} \biggr] \bigg\|_{L^{\infty}}
\begin{cases}
<+ \infty & \textrm{if }|c| < 1,  \\ =+\infty &  \textrm{if } |c| \geq 1,
\end{cases}
\end{equation}
where $\ti{\mathbb{P}}$ is the probability measure given by $\frac{d \ti{\mathbb{P}}}{d \mathbb{P}} := \mc{E} \bigl(- b \ti{\lambda}\cdot W\bigr)_{T}$.
\end{lem}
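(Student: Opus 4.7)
My strategy is to choose $\ti\lambda$ supported on a stochastic interval $]\!]T/2, \ti\tau]\!]$ of the form $\ti\lambda_t = \mu(T-t)^{-1/2}$ there, where $L, \mu > 0$ are calibrated by $L\mu\sqrt{2} = \pi/2$ and $\ti\tau$ is defined so as to absorb the Girsanov drift. Concretely, I set
\[
\ti\tau := \inf\Bigl\{t > T/2 \;\Big|\; \Bigl|\int_{T/2}^t (T-s)^{-1/2}\, dW_s - b\mu \log\tfrac{T-t}{T/2}\Bigr| \geq L\Bigr\}
\]
and $\ti\lambda_t := \mu(T-t)^{-1/2}\mathbf{1}_{]\!]T/2, \ti\tau]\!]}(t, \cdot)$. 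The role of the $-b\mu\log$ correction inside $\ti\tau$ is that, by Girsanov's identity $d\ti W_s = dW_s + b\ti\lambda_s\, ds$, one has
\[
\int_{T/2}^t (T-s)^{-1/2}\, d\ti W_s \;=\; \int_{T/2}^t (T-s)^{-1/2}\, dW_s - b\mu \log\tfrac{T-t}{T/2} \quad \text{on } ]\!]T/2, \ti\tau]\!],
\]
so $\ti\tau$ coincides with the first exit from $(-L, L)$ of the continuous $\ti{\mb{P}}$-martingale $\ti M_t := \int_{T/2}^t (T-s)^{-1/2}\, d\ti W_s$. The Dambis-Dubins-Schwarz time change $u(t) := -\log((T-t)/(T/2))$ then yields $\ti M_t = \ti B_{u(t)}$ for a $\ti{\mb{P}}$-Brownian motion $\ti B$, so that $u_{\ti\tau}$ is the undrifted first exit of $\ti B$ from $(-L, L)$ started at $0$, independently of $b$.

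Before passing to $\ti{\mb{P}}$ I would first verify that it is a genuine probability measure. Applying Dambis-Dubins-Schwarz to the $\mb{P}$-martingale $M_t := \int_{T/2}^t (T-s)^{-1/2}\, dW_s$ yields $M_t = B_{u(t)}$ for a $\mb{P}$-Brownian motion $B$, whence $u_{\ti\tau} = \inf\{u>0 \,:\, |B_u + b\mu u| \geq L\}$ is the first exit from $(-L, L)$ of a drifted $\mb{P}$-Brownian motion. It is almost surely finite, so $\ti\tau < T$ almost surely. Standard theory of drifted Brownian motion gives that the expected remaining exit time is bounded uniformly in the starting point on $[-L, L]$, and since $\int_\tau^T \ti\lambda_s^2\, ds \leq \mu^2 u_{\ti\tau}$ for every stopping time $\tau$, this shows that $\ti\lambda \cdot W$ is a BMO martingale under $\mb{P}$. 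Kazamaki's theorem then makes $\mc{E}(-b\ti\lambda\cdot W)$ a true martingale, so that $\ti{\mb{P}}$ is well-defined.

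With these preparations the main estimate follows. Since $\int_0^T \ti\lambda_t^2\, dt = \mu^2 u_{\ti\tau}$ and $\ti B$ is a standard $\ti{\mb{P}}$-Brownian motion, the classical exit-time Laplace formula $\E_x[\exp(\lambda\, \tau_{(-L, L)})] = \cos(x\sqrt{2\lambda})/\cos(L\sqrt{2\lambda})$, valid for $L\sqrt{2\lambda} < \pi/2$ and equal to $+\infty$ otherwise, combined with the strong Markov property of $\ti B$ yields
\[
\sup_\tau \Bigl\| \E_{\ti{\mb{P}}}\Bigl[\exp\Bigl(c^2\!\int_\tau^T \ti\lambda_t^2\, dt\Bigr)\,\Big|\,\mc{F}_\tau\Bigr]\Bigr\|_{L^\infty} \;=\; \sup_{x \in [-L, L]} \frac{\cos(xc\mu\sqrt 2)}{\cos(Lc\mu\sqrt 2)} \;=\; \frac{1}{\cos(c\pi/2)}
\]
for $|c| < 1$, and the same supremum equals $+\infty$ for $|c| \geq 1$, precisely by our calibration $L\mu\sqrt 2 = \pi/2$. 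The main subtle step I foresee is the consistency of the implicit and explicit descriptions of $\ti\tau$ together with the observation that the drift correction renders the exit problem under $\ti{\mb{P}}$ driftless for \emph{every} $b \in \mb{R}$; in particular the same calibration handles $b^2 \geq 2$, where a naive $W$-based exit stopping time would leave a residual linear drift in the exit problem under $\ti{\mb{P}}$ and would fail to deliver the critical exponent exactly equal to $1$.
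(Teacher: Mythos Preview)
Your argument is correct and in fact somewhat cleaner than the paper's for the lemma as stated. Both constructions share the same skeleton: take $\tilde\lambda$ proportional to $(T-t)^{-1/2}$ on $]\!]T/2,\tilde\tau]\!]$, and build the drift correction into the definition of $\tilde\tau$ so that under $\tilde{\mathbb{P}}$ the stopping time becomes the undrifted first exit of a time-changed Brownian motion from a symmetric interval; the exit-time Laplace transform $\cos(x\sqrt{2\lambda})/\cos(L\sqrt{2\lambda})$ then does all the work. Your BMO argument under $\mathbb{P}$ via the uniformly bounded expected remaining exit time of a drifted Brownian motion is different from (and arguably more direct than) the paper's, which instead exploits that $\tilde\lambda\cdot\tilde W$ is bounded and deduces the BMO property from a one-sided bound on $\tilde\lambda\cdot W$ together with optional sampling.

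The one substantive difference is that the paper inserts an additional $\mathcal{F}_{T/2}$-measurable random factor $\alpha\in[0,1)$ into $\tilde\lambda$, whereas you use a deterministic coefficient $\mu$. For the lemma itself this is irrelevant: with your calibration, at $|c|=1$ already the unconditional $\tilde{\mathbb{P}}$-expectation is $1/\cos(\pi/2)=+\infty$, so the supremum in \eqref{condex} is trivially infinite. The paper's random $\alpha$ buys something finer: at $c=1$ the conditional expectation at time $T/2$ equals $1/\cos(\pi\alpha/2)$, which is a.s.\ \emph{finite} yet unbounded, and the unconditional expectation is finite (equal to $2$). This extra property is not needed for Lemma~\ref{lemcondex} but is exactly what the subsequent proof of Theorem~\ref{dem}(ii) uses, where one must have $\E_{\tilde{\mathbb{P}}}\bigl[\exp\bigl(\int_0^T\tilde\lambda_t^2\,dt\bigr)\bigr]<+\infty$ to guarantee existence of a BSDE solution while still forcing $\hat\Psi$ to be unbounded. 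So your construction proves the lemma but would not feed into that application without modification.
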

\begin{proof}
We proceed similarly to the example from Subsection \ref{CountBound2} and define for $t\in[0,T]$,
\begin{equation}\label{tillam}
\ti{\lambda}_t :=  \frac{\pi\alpha}{ \sqrt{8(T-t)}}\, \mathbf{1}_{\big]\!\!\big]T/2,\ti{\tau}\big]\!\!\big]}(t,\cdot),
\end{equation}
where $\alpha$ is as in the proof of Proposition \ref{ThmCountBound2} and where $\ti{\tau}$ is now the stopping time \[\ti{\tau} := \inf\Bigg\{ t >\frac{T}{2}\,\Bigg|\, \Bigg| \int_{T/2}^{t} \frac{1}{\sqrt{T-s}}\left(dW_{s}+ \frac{b \pi \alpha  }{\sqrt{8(T-s)}}\,ds \right) \!\Bigg|\geq1 \Bigg\},\] for which again $\mathbb{P}(T/2<\ti{\tau}<T)=1$. Then $\int_0^{{}^\cdot}  \ti{\lambda}_t \bigl( dW_t + b\ti{\lambda}_t \, dt \bigr)$ is bounded by $\frac{\pi}{\sqrt{8}}$. If $b<0$ we derive from \[\ti{\lambda}\cdot W=\int_0^\cdot  \ti{\lambda}_t \bigl( dW_t + b\ti{\lambda}_t \, dt \bigr)-b\int_0^\cdot \ti{\lambda}_t^2\,dt\geq-\frac{\pi}{\sqrt{8}},\] 
that the continuous local martingale $\ti{\lambda}\cdot W$ is bounded from below, hence a supermartingale. It then follows from the Optional Sampling Theorem, see Karatzas and Shreve \cite{KS91} Theorem 1.3.22, that for any stopping time $\tau$ valued in $[0,T]$,
\begin{align*}
\E\!\left[\int_\tau^T\ti{\lambda}_t^2\,dt\,\Bigg|\,\mc{F}_\tau\right]&=\frac{1}{b}\,\E\!\left[\int_\tau^T\ti{\lambda}_t(dW_t+b\ti{\lambda}_t\,dt)\,\Bigg|\,\mc{F}_\tau\right]-\frac{1}{b}\,\E\!\left[\int_\tau^T\ti{\lambda}_t\,dW_t\,\Bigg|\,\mc{F}_\tau\right]\leq\frac{\pi}{\sqrt{2}\,|b|}.
\end{align*}
In particular, $\ti{\lambda}\cdot W$ is a BMO martingale. A similar reasoning applies if $b>0$ and the claim is immediate for $b=0$. We hence may consider the measure $\ti{\mathbb{P}}$ given by $\frac{d \ti{\mathbb{P}}}{d \mathbb{P}} := \mc{E} \bigl(- b \ti{\lambda}\cdot W\bigr)_{T}$ under which $\ti{W}:= W +b\int_0^{{}^\cdot}\ti{\lambda}_t \, dt$ is a Brownian motion. 

Now, for a stopping time $\tau$ valued in $[T/2,T]$ and for $u\in \mathbb{R}$ and $v \in [0,T)$, we set
\begin{align*}
\ti{\tau}_{u,v}(\tau)  :&=  v+\inf\Biggl\{ t \geq 0 \,\Bigg|\, \Bigg| u+\int_{0}^{t} \frac{1}{\sqrt{T-s-v}}\,d\ti{W}_{\tau+s}  \Bigg|\geq1 \Biggr\}\\
&= \inf\Biggl\{ t \geq v \,\Bigg|\, \Bigg| u+\int_{v}^{t} \frac{1}{\sqrt{T-s}}\,d\ti{W}_{\tau+s-v}  \Bigg|\geq1 \Biggr\},
\end{align*}
where we extend the $\ti{\mathbb{P}}$-Brownian motion $\ti{W}$ to $[0,2T]$.

Let $|c|<1$. Since $\ti{\lambda}$ vanishes on $[0,T/2]$ and $\exp \bigl( c^{2} \int_{\tau}^{T} \ti{\lambda}_{t}^{2} \,d t \bigr)= 1$ on $\{\tau = T \}$, for the first assertion of \eqref{condex}, it is enough to consider stopping times $\tau$ valued in $(T/2,T)$. Using the $\mc{F}_{\tau}$-measurable random variable $U := \int^{\tau}_{T/2} \frac{1}{\sqrt{T-s}}\,d\ti{W}_{s}$ we have that $\ti{\tau}\leq{\ti{\tau}}_{{}_{U,\tau}}(\tau)$ a.s. Moreover, $\ti{\tau}_{u,v}(\tau)$ is $\ti{\mathbb{P}}$-independent of $\mc{F}_{\tau}$ since it is $\sigma\bigl(\ti{W}_{\tau+s}-\ti{W}_{\tau}, s\geq 0\bigr)$-measurable. We thus obtain
\begin{align}
\label{bddcos} \E_{\ti{\mathbb{P}}}\biggl[ \exp \biggl( c^{2} \int_{\tau}^{T} \ti{\lambda}_{t}^{2} \,d t \biggr)\bigg| \,\mc{F}_{\tau} \biggr]& \leq \E_{\ti{\mathbb{P}}}\biggl[ \exp \biggl( \frac{c^{2}\pi^{2}}{8} \int_{\tau}^{ {\ti{\tau}}_{{}_{U,\tau}}(\tau)} \frac{1}{T-t} \,d t \biggr) \bigg| \,\mc{F}_{\tau} \biggr]\\ \nonumber
& = \E_{\ti{\mathbb{P}}}\biggl[ \exp \biggl( \frac{c^{2}\pi^{2}}{8} \int_{v}^{\ti{\tau}_{u,v}(\tau)} \frac{1}{T-t} \,d t \biggr) \biggr]\Bigg|_{u = U,v=\tau}\\ \nonumber
& = \mathbf{1}_{\{ |U| \geq 1 \}} + \frac{\cos(c \pi U/ 2  )}{\cos (c \pi / 2 )}\,\mathbf{1}_{\{ |U|  < 1 \}} \leq \frac{1}{\cos (c \pi / 2 )}<+\infty,
\end{align}
where we applied \cite{Ka94} Lemma 1.3 in a similar way as in the proof of \cite{FdR10} Lemma A.1 and used that $\ti{\tau}_{u,v}(\tau)$ and $\ti{\tau}_{u,v}(0)$ have the same distribution under $\ti{\mathbb{P}}$. This gives an upper bound for \eqref{condex} in the case $|c| < 1$.

If $|c|\geq1$, we note that from $\ti{\tau}=\ti{\tau}_{0,T/2}(T/2)$ a.s. and the definition of $\alpha$, \begin{equation}\label{condex2}\E_{\ti{\mathbb{P}}}\biggl[ \exp \biggl( c^{2} \int_{T/2}^{T} \ti{\lambda}_{t}^{2} \,d t \biggr)\bigg| \,\mc{F}_{T/2} \biggr]\geq\E_{\ti{\mathbb{P}}}\biggl[ \exp \biggl( \frac{\pi^2\alpha^2}{8} \int_{T/2}^{\ti{\tau}} \frac{1}{T-t} \,d t \biggr)\bigg| \,\mc{F}_{T/2} \biggr]=\frac{1}{\cos(\pi\alpha/2)},\end{equation} which is \emph{unbounded} and this concludes the proof of Lemma \ref{lemcondex}.
\end{proof}
We are now ready to provide the proof of Theorem \ref{dem}.

\begin{proof}[Proof of Theorem \ref{dem}] For item (i) we proceed similarly to the proof of Lem\-ma \ref{Ass2341} by choosing the sharpest possible
version of H\"{o}lder's inequality in the sense that the condition
on the $\mathrm{BMO}_2$ norm of $\lambda\cdot M$ is the least
restrictive; this is how $k_q$ is selected. We set $\beta:=1-\frac{1}{q}\sqrt{q^2-q}>1$,
then with $\varrho:=\beta/(\beta-1)>1$, the dual number to $\beta$, we have that for any stopping time $\tau$ valued in $[0,T]$,
\begin{align}
\E\!\left[\Big(Y_T^{\lambda}\!\big/Y_\tau^{\lambda}\Big)^q\bigg|\,\mc{F}_\tau\right]&\leq \E\Bigg[\mc{E}(-\beta q\lambda\cdot
M)_{\tau,T}^{1/\beta}\exp\!\bigg(\frac{\varrho q}{2}(\beta q-1)\int_\tau^T\!\lambda_s^\tr\,d\lo M,M\ro_s\lambda_s\bigg)^{1/\varrho}\Bigg|\,\mc{F}_\tau\Bigg]\notag
\\
&\leq \E\Bigg[\exp\!\bigg(k_q\int_\tau^T\lambda_s^\tr\,d\lo
M,M\ro_s\lambda_s\bigg)\Bigg|\,\mc{F}_\tau\Bigg]^{1/\varrho} \leq C
\end{align}
for some constant $C$, where we used H\"{o}lder's inequality, the supermartingale
property of $\mc{E}(-\beta q\lambda\cdot M)$, the definition of
the constants and $b(\lambda\cdot M)>k_q$. Assumption \ref{ass2} holds because $Y^\lambda:=\mc{E}(-\lambda\cdot M)$ is a martingale by \cite{Ka94} Theorem 2.3. Moreover, using $x>0$ and $\tau\equiv0$ in the previous calculation, we obtain
\begin{align*}
0\leq
u(x)=\sup_{\substack{\nu\in\,\mc{A}}}\E\Bigl[U\big(X_T^{x,\nu}\big)\Bigr]&\leq
\E\!\left[\ti{U}\big(Y_T^{\lambda}\big)\right]+
\sup_{\substack{\nu\in\,\mc{A}}}\E\!\left[X_T^{x,\nu}
Y_T^{\lambda}\right]\leq
-\frac{1}{q}\,\E\!\left[\big(Y_T^{\lambda}\big)^q\right]+
x\\&\leq-\frac{1}{q}\,c_{\textit{\tiny{rH}},p}+ x<+\infty.
\end{align*}
For the uniqueness statement we assume that $\hat{X}$, $\hat{Y}$, $\hat{\Psi}$, $\hat{Z}$ and $\hat{N}$ are as in Theorem \ref{thmuniq}. Then $(\hat{\Psi},\hat{Z},\hat{N})$ is a solution to the BSDE \eqref{BSDE2} where the process $\hat{\Psi}$ is bounded. This is due to \cite{Nu109} Proposition 4.5. Conversely, if the triple $(\Psi,Z,N)$ is a solution to the BSDE \eqref{BSDE2} with $ \Psi $ bounded, we can identify it with $(\hat{\Psi},\hat{Z},\hat{N})$ by \cite{Nu209} Corollary 5.6 provided that the utility maximization is finite for some $\tilde{p} \in (p,1)$, which is a consequence of Lemma \ref{lemholder}.

For item (ii) observe that since 
\begin{equation*}
k<k_q:=q^2-\frac{q}{2}-q\sqrt{q^2-q},
\end{equation*}
there exists an $a>0$ such that 
\begin{equation}
\label{defnOfA}
k<q^2-\frac{q}{2}-q\sqrt{q^2-q-2a^2}.
\end{equation}
Choose such an $a$ and then set $b:=\frac{1}{a}\,(q-\sqrt{q^2-q-2a^2})<\frac{q}{a}<0$. We mention that the need for two parameters $a$ and $b$ stems from the fact that we have two conditions which must both be satisfied, the first concerns the finiteness of exponential moments and the second relates to the (un)boundedness of $\hat{\Psi}$. We then define $\ti{\lambda}$ and $\ti{\mathbb{P}}$ as in Lemma \ref{lemcondex} and observe that contrary to the previous examples the measure change is now part of the construction. Finally, we set $\lambda:= \frac1a  \ti{\lambda}$ and deduce for $t \in [0,T]$ that,
\begin{align*} 
\E\bigl[\mc{E}(-\lambda \cdot W)_{t,T}^{q}\,\big| \,\mc{F}_{t} \bigr] & =
\E_{\ti{\mb{P}}}\biggl[\exp\biggl(\! \biggl(b-\frac{q}{a}\biggr)\!\int_{t}^{T}\ti{\lambda}_{s} \,d \ti{W}_{s}+ \biggl(\frac{qb}{a}-\frac{q }{2a^{2}} - \frac{b^{2}}{2}\biggr)\!\int_{t}^{T}\ti{\lambda}_{s}^{2} \,d s  \biggr)\, \bigg|\, \mc{F}_{t}\biggr] \\ 
& \begin{cases}
 \leq  e^{  \frac{(q /a-b)\pi}{\sqrt{2}}} \E_{\ti{\mathbb{P}}}\Bigl[ \exp \Bigl(   \int_{t}^{T} \ti{\lambda}_{s}^{2} \,d s\Bigr)\Big| \,\mc{F}_t \Bigr],    \\ \geq e^{ \frac{(b-q /a)\pi}{\sqrt{2}}} \E_{\ti{\mathbb{P}}}\Bigl[ \exp \Bigl(  \int_{t}^{T} \ti{\lambda}_{s}^{2} \,d s \Bigr)\Big| \,\mc{F}_t \Bigr],  
 \end{cases}
\end{align*}
where we used the boundedness of $\ti{\lambda}\cdot\ti{W}$ and $ \frac{qb}{a}-\frac{q }{2a^{2}}- \frac{b^{2}}{2}= 1 $, together with $b<q/a$. By \eqref{condex2}, this shows that  $\E\bigl[\mc{E}(-\lambda \cdot W)_{T/2,T}^{q}\,\big|\, \mc{F}_{T/2} \bigr] $ is unbounded, whereas we have $ \E\bigl[\mc{E}(-\lambda \cdot W)_{T}^{q}\bigr] < + \infty$ since $\E_{\ti{\mathbb{P}}}\bigl[ \exp \bigl(\int_{0}^{T} \ti{\lambda}_{t}^{2} \,d t \bigr) \bigr]=2$, see the proof of Proposition \ref{ThmCountBound2}. Proposition \ref{ExplSolW} now yields the  existence of a solution $(\hat{\Psi},\hat{Z},\hat{N}\equiv0)$ and the identification with the primal and dual problems. The conclusion is that $\hat{\Psi}$ is unbounded. Moreover, using the boundedness of $\ti{\lambda}\cdot\ti{W}$ again, we have
\[\sup_{\tau} \bigg\|  \E\biggl[ \exp \biggl(  k \int_{\tau}^{T} \lambda_{t}^{2} \,d t \biggr)\bigg| \,\mc{F}_{\tau} \biggr] \bigg\|_{L^{\infty}} 
 \leq   e^{    \frac{|b| \pi}{\sqrt{2}}  }\,\sup_{\tau} \bigg\|  \E_{\ti{\mb{P}}}\biggl[ \exp \biggl(  \int_{\tau}^{T}\biggl(\frac{k}{ a^{2}} - \frac{b^{2}}{2} \biggr) \ti{\lambda}_{t}^{2} \,d t \biggr) \bigg| \,\mc{F}_{\tau}\biggr] \bigg\|_{L^{\infty}}\!\!.\]
This is finite by \eqref{condex} since the relation $\frac{k}{ a^{2}} - \frac{b^{2}}{2} < 1$  is equivalent to \[k  < a^{2}+\frac{a^{2}b^{2}}{2} = qab-\frac{q}{2}= q^{2}- \frac{q}{2} -q \sqrt{q^{2}-q-2a^{2}},\] which is inequality \eqref{defnOfA}.

The proof of item (iii) is similar to that of item (ii). We use the same definitions subject to the modification that now we must choose $a > 0$ and $b\in\mb{R}$ such that
\[\frac{qb}{a}-\frac{q }{2a^{2}}-\frac{b^{2}}{2} <  1\quad \textrm{ and } \quad \frac{k_{q}}{ a^{2}}-\frac{b^{2}}{2}= 1.\]
This choice ensures the existence of the optimizers and guarantees the boundedness of $\hat{\Psi}$, again thanks to Proposition \ref{ExplSolW} and \eqref{condex}. Note that now a dynamic exponential moment of order $k_q$ will not exist.

The above equation is satisfied for $b:= \sqrt{\frac{2k_{q}}{ a^{2}} -2}>0$ if $a^{2}  < k_{q} $, and then the inequality reads as $\frac{q}{a}\sqrt{\frac{2k_{q}}{ a^{2}} -2}-\frac{q }{2a^{2}} - \frac{k_{q}}{ a^{2}}    <  0 $. This last relation holds for any choice of $a \in \bigl(0,\sqrt{ k_{q}}\,\bigr)$ since we have  $k_{q} >-\frac{q}{2}> 0$.
\end{proof}

A consequence of Theorem \ref{dem} is the following result. 
\begin{cor} \label{cordem} \mbox{}
\begin{enumerate}
\item If $\lambda \cdot M $ is a martingale that satisfies $b( \lambda \cdot M) = +\infty$, then for all $p\in(0,1)$ the solution pair $(\hat{X},\hat{Y})$ to the primal and dual problem exists. If $\hat{\Psi}$, $\hat{Z}$ and $\hat{N}$ are as in Theorem \ref{thmuniq}, then the triple $(\hat{\Psi},\hat{Z},\hat{N})$ is the unique solution to the BSDE \eqref{BSDE2} with $\hat{\Psi}$ \emph{bounded}.
\item The converse statement, however, is not true. More precisely, if $\lambda \cdot M$ is a BMO martingale such that for all $p\in(0,1)$ the solutions to the primal and dual problem exist with $\hat{\Psi}$ bounded, the critical exponent \emph{need not} satisfy $b( \lambda \cdot M) = +\infty$.
\end{enumerate}
\end{cor}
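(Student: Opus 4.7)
Since the set of $k \geq 0$ for which the inner supremum in \eqref{defb} is finite is downward closed, the hypothesis $b(\lambda \cdot M) = +\infty$ implies that this supremum is finite for every $k \geq 0$. As the constant $k_q$ defined in \eqref{defkq} is finite for every $q < 0$, we have $b(\lambda \cdot M) > k_q$ for all $p \in (0,1)$. Applying Theorem \ref{dem}(i) to each $p$ separately then yields both the existence of the optimizers $(\hat{X}, \hat{Y})$ and the identification of $(\hat{\Psi},\hat{Z},\hat{N})$ as the unique solution to \eqref{BSDE2} with bounded first component.

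\textbf{Part (ii).} The plan is to construct, in the Brownian setting $M = W$, a single BMO martingale $\lambda \cdot W$ satisfying $b(\lambda \cdot W) < +\infty$ for which $\hat{\Psi}$ is nevertheless bounded for every $p \in (0,1)$. By the explicit formula \eqref{hatPsiExpl}, boundedness of $\hat{\Psi}$ for a given $q<0$ amounts to uniform boundedness in $(t,\omega)$ of $\E[\mc{E}(-\lambda \cdot W)_{t,T}^q \mid \mc{F}_t]$. Writing $\mc{E}(-\lambda \cdot W)^q = \mc{E}(-q\lambda \cdot W)\exp\!\bigl(\tfrac{q(q-1)}{2}\!\int \lambda^2\,ds\bigr)$ as in the proof of Proposition \ref{ExplSolW}, this translates into the requirement that
\[
\E_{\ti{\mathbb{P}}^q}\!\bigl[\exp\bigl(\tfrac{q(q-1)}{2}\textstyle\int_t^T \lambda_s^2\,ds\bigr)\,\big|\, \mc{F}_t\bigr]\quad \text{be uniformly bounded in } (t,\omega),
\]
where $d\ti{\mathbb{P}}^q/d\mathbb{P} := \mc{E}(-q\lambda \cdot W)_T$. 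The critical exponent $b(\lambda \cdot W)$, by contrast, concerns the \emph{same} dynamic moments but under the \emph{original} measure $\mathbb{P}$; the two conditions differ, leaving the necessary room for a gap between boundedness of $\hat{\Psi}$ and finiteness of $b(\lambda \cdot W)$.

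\textbf{Proposed construction.} Exploiting this gap, take $\lambda := c_0\, \ti{\lambda}$ with $\ti{\lambda}$ as in Lemma \ref{lemcondex} for a suitable parameter $b_0$ and a scaling constant $c_0 > 0$ to be chosen. Condition \eqref{condex} in that lemma provides bounded $\ti{\mathbb{P}}$-dynamic moments of $\int \ti{\lambda}^2\,dt$ of any order $|c|^2 < 1$; a computation in the spirit of the proof of Theorem \ref{dem}(ii)--(iii) should show that this certifies the desired boundedness of $\hat{\Psi}$ for every $q < 0$ provided the effective order $c_q$, a function of $(q, b_0, c_0)$, remains strictly below $1$. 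Meanwhile, \eqref{condex2} furnishes divergence of the $\mathbb{P}$-dynamic moments of $\int \lambda^2\,dt$ above a finite threshold, giving $b(\lambda \cdot W) < +\infty$ by Lemma \ref{lemb}. The BMO property of $\lambda \cdot W$, the existence of the optimizers, and the structure of $(\hat{\Psi}, \hat{Z})$ then follow directly from Proposition \ref{ExplSolW} together with the arguments already used in the proof of Lemma \ref{lemcondex}.

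\textbf{Main obstacle.} The crucial and delicate point is uniformity in $q$: one must select a single pair $(b_0, c_0)$ making $c_q < 1$ for \emph{every} $q \in (-\infty, 0)$, despite the divergence $q(q-1)/2 \to +\infty$ as $q \to -\infty$. The viability of this uniform choice rests on the fact that the Girsanov drift introduced by the measure change $\ti{\mathbb{P}}^q$ grows with $|q|$ and, for the specific construction in Lemma \ref{lemcondex}, partially cancels the growth of the required moment order. A careful parameter analysis in the spirit of the proofs of Theorem \ref{dem}(ii)--(iii) is then expected to close the argument.
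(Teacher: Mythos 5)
Part (i) of your proposal is correct and is exactly the paper's argument: $b(\lambda\cdot M)=+\infty$ gives $b(\lambda\cdot M)>k_q$ for every $q<0$, and Theorem \ref{dem}(i) applies for each $p$ separately.

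Part (ii) contains a genuine gap. You correctly identify the family of candidate examples (rescalings of $\ti{\lambda}$ from Lemma \ref{lemcondex}) and the correct mechanism (cancellation between the drift built into the construction and the compensator of the stochastic exponential), but you never fix the parameters $(b_0,c_0)$ and you explicitly leave the decisive verification open (``is expected to close the argument''). Worse, the route you propose makes the problem look harder than it is: you want to control, for each $q$, a $\ti{\mathbb{P}}^q$-dynamic exponential moment of $\int\lambda^2\,dt$ of order $\tfrac{q(q-1)}{2}$, where the measure $\ti{\mathbb{P}}^q$ itself changes with $q$. The Girsanov drift induced by $\mc{E}(-q\lambda\cdot W)$ is $-q\lambda$, so no single pair $(b_0,c_0)$ can align it with the drift $b_0\ti{\lambda}$ hard-wired into the stopping time of Lemma \ref{lemcondex} for all $q$ simultaneously; your ``partial cancellation'' claim is precisely the step that is missing and is not obviously true in the form you state it (a uniform ``effective order $c_q<1$'' is in fact not what is needed, since the bound on $\hat{\Psi}$ is allowed to depend on $p$).

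The paper's resolution avoids any $q$-dependent measure change. Take $M=W$, $b=1/2$ and $\lambda=\ti{\lambda}$ in \eqref{tillam}. By the definition of the stopping time, $\int_0^{\cdot}\lambda_t\bigl(dW_t+\tfrac12\lambda_t\,dt\bigr)$ is bounded by $\pi/\sqrt{8}$, and the identity
\[
-q(\lambda\cdot W)_{\tau,T}-\frac q2\int_\tau^T\lambda_t^2\,dt=-q\int_\tau^T\lambda_t\Bigl(dW_t+\frac12\lambda_t\,dt\Bigr)
\]
immediately yields $\E\bigl[\mc{E}(-\lambda\cdot W)_{\tau,T}^q\,\big|\,\mc{F}_\tau\bigr]\le e^{-q\pi/\sqrt2}$ for every stopping time $\tau$ and every $q<0$. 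This is the reverse H\"older inequality $R_q$ with a $q$-dependent but finite constant, which is all that boundedness of $\hat{\Psi}$ for each fixed $p\in(0,1)$ requires. For the other half, passing to the single measure $\ti{\mathbb{P}}$ of Lemma \ref{lemcondex} costs a bounded factor and shifts the moment order by $1/8$, so \eqref{condex} forces the $\mathbb{P}$-dynamic exponential moment of order $k$ to be infinite for $k\ge 9/8$, whence $b(\lambda\cdot W)\le 9/8<+\infty$ directly from the definition \eqref{defb} (Lemma \ref{lemb} is not needed for this direction). Your write-up would need this explicit parameter choice and computation to constitute a proof.
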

\begin{proof} The first part is an immediate consequence of Theorem \ref{dem} (i). For the second part, we proceed similarly to the proof of its item (ii). Taking a one-dimensional Brownian motion $M = W$, we define $\lambda$ via \eqref{tillam} with $b = 1/2$ and $\lambda = \ti{\lambda}$. By construction,  $\int_0^{{}^\cdot} \lambda_t \bigl( dW_t + \frac{\lambda_t}{2}\, dt \bigr)$ is bounded by $\frac{\pi}{\sqrt{8}} $ so that for $q<0$
\[\sup_{\substack{ \tau \mathrm{\,stopping \,time}\\\mathrm{valued \,in\,[0,T]}}}\bigg\|  \E\biggl[ \exp \biggl( - q\int_{\tau}^{T}  \lambda_{t}\, dW_{t} - \frac{q}{2} \int_{\tau}^{T} \lambda_{t}^{2}  \, d t \biggr) \bigg| \,\mc{F}_{\tau}\biggr] \bigg\|_{L^{\infty}} \leq e^{\frac{-q\pi}{\sqrt{2}}}<+ \infty.  
\]
Hence, for all $p\in(0,1)$, the solutions to the primal and dual problem exist and the corresponding triple $(\hat{\Psi},\hat{Z},\hat{N}\equiv0)$ is the unique solution to the BSDE \eqref{BSDE2} with $\hat{\Psi}$ bounded. For the estimate on the process $\lo \lambda\cdot W,\lambda\cdot W\ro$ we have
\begin{multline*}
\sup_{\tau} \bigg\|  \E\biggl[ \exp \biggl(  k \int_{\tau}^{T} \lambda_{t}^{2} \,d t \biggr)\bigg| \,\mc{F}_{\tau} \biggr] \bigg\|_{L^{\infty}}
\geq  e^{  \frac{-\pi}{ \sqrt{8}} }\,\sup_{\tau} \bigg\|  \E_{\ti{\mathbb{P}}}\biggl[ \exp \biggl( \biggl(k- \frac{1}{8} \biggr)\!\int_{\tau}^{T}   \lambda_{t}^{2}\,d t \biggr) \bigg| \,\mc{F}_{\tau}\biggr] \bigg\|_{L^{\infty}}.
\end{multline*}
The right hand side is $+\infty$ when $k- \frac{1}{8}\geq 1$ by \eqref{condex}, this implies $b( \lambda \cdot W)\leq \frac98  <+\infty$ $\big($actually, $b( \lambda \cdot W)=\frac98\big)$ despite the fact that $\hat{\Psi}$ is bounded for arbitrary $p\in(0,1)$.
\end{proof}
\begin{rmk}
Corollary \ref{cordem} is based on the fact that $b( \lambda \cdot M) = +\infty$ is stronger than requiring that $\mc{E}(- \lambda \cdot M)$ satisfies the reverse H\"{o}lder inequality $R_{q}$ for all $q < 0$. However, there exists an equivalence between $b( \lambda \cdot M) = +\infty$ and a \emph{strengthened} reverse H\"{o}lder condition. It follows from \cite{DT10} Theorem 4.2 that $b( \lambda \cdot M) = +\infty$ holds if and only if 
for some (or equivalently, all) $\varrho \in [1,+\infty)$ and all $a \in \mb{C}$ there exists $c_{\varrho,a} >0$ such that
\[\E\!\left[ \bigg|\frac{\mc{E}(a\lambda \cdot M)_{\sigma}}{\mc{E}(a\lambda \cdot M)_{\tau}} \bigg|^\varrho\Bigg|\,\mc{F}_\tau\right]\leq c_{\varrho,a}\] for all stopping times $\tau \leq \sigma$ valued in $[0,T]$.
\end{rmk}

\appendix

\section{Quadratic Continuous Semimartingale BSDEs under Exponential Moments}\label{appendBSDE}
In this appendix we provide a short introduction to quadratic semimartingale BSDEs as described in \cite{Mo09,MW10}. In particular we show that all the assumptions of \cite{MW10} are satisfied and summarize the main results therein which are pertinent to the present study. Let us consider the BSDE \eqref{BSDE} on $[0,T]$,
\begin{multline}
d\Psi_t= Z_t^\tr\,dM_t + dN_t -\frac{1}{2}\,d\lo N,N\ro_t \\+\frac{q}{2}(Z_t+\lambda_t)^\tr\,d\lo M,M\ro_t(Z_t+\lambda_t)-\frac{1}{2}Z_t^\tr\,d\lo M,M\ro_tZ_t ,\quad \Psi_T=0.\label{BSDEF-}
\end{multline}
To prove existence and uniqueness one must first factor the process $\lo M,M\ro$. We set $A:=\arctan\!\left(\sum_{i=1}^d\lo M^i,M^i\ro\right)$ so that $A$ is bounded by $\pi/2$ and derive the absolute continuity of each of the $\lo M^i,M^j\ro$, $i,j\in\{1,\ldots,d\}$, with respect to $A$ from the Kunita-Watanabe inequality in order to get the existence of a predictable process $B$ valued in the space of $d\times d$ matrices such that $\lo M,M\ro=B^{\tr}B\cdot A$. The BSDE \eqref{BSDE} then becomes
\begin{equation}
d\Psi_t= Z_t^\tr\,dM_t + dN_t -F(t,\Psi_{t},Z_t)\,dA_t-\frac{1}{2}\,d\lo N,N\ro_t ,\quad \Psi_T=0,\label{BSDEF}
\end{equation}
where $F:[0,T]\times\Omega \times \mb{R}\times\mb{R}^d\to\mb{R}$ is a random predictable function, called the \emph{driver}, which in \eqref{BSDEF-} is given by
\[F(t,z)=-\frac{q}{2}(z+\lambda_t)^\tr B^{\tr}_tB_t(z+\lambda_t)+\frac{1}{2}z^\tr B^{\tr}_tB_tz=-\frac{q}{2}\|B_t(z+\lambda_t)\|^2+\frac{1}{2}\|B_tz\|^2.\]
Since the results in \cite{MW10} only depend on the \emph{boundedness} of $A$, in a $d$-dimensional Brownian setting we may set $A_t:=t$ for $t\in[0,T]$ and $B$ the identity matrix. 
\begin{defn}\label{DefnSol}
A \emph{solution} to the BSDE \eqref{BSDEF} is a triple $(\Psi,Z,N)$ of processes valued in $\R\times\R^d\times\R$
satisfying the equation \eqref{BSDEF} a.s. such that:
\begin{enumerate}
\item The function $t\mapsto \Psi_t$ is \emph{continuous} a.s.
\item The process $Z$ is predictable and satisfies $\int_0^TZ_t\,d\lo M,M\ro_tZ_t<+\infty$, a.s. hence is $M$-integrable.
\item The local martingale $N$ is continuous and orthogonal to each component of $M$, i.e. $\lo M^i,N \ro \equiv0$ for all $i=1,\ldots,d$.
\item We have that a.s.\[\int_0^T|F(t,\Psi_{t},Z_t)|\,dA_t+\lo N,N\ro_T<+\infty.\]
\end{enumerate}
The process $Z\cdot M+N$ is called the \emph{martingale part} of a solution.
\end{defn}
We collect some properties of the driver $F$ of \eqref{BSDEF-} in the following lemma whose proof is left to the reader.
\begin{lem}\label{PropF}We have that a.s.
\begin{enumerate}
\item The function $z\mapsto F(t,z)$ is continuously differentiable for all $t\in[0,T]$.
\item The function $F$ has quadratic growth in $z$, i.e. for arbitrary $\eps_0>0$ and all $(t,z)\in[0,T]\times\mb{R}^d$ we have \[|F(t,z)|\leq \frac{1}{2}\max\!\left(\frac{q(q-\eps_0)}{\eps_0},\frac{q}{1-q}\right)\|B_t\lambda_t\|^2+\frac{\gamma}{2}\,\|B_tz\|^2=:\alpha_t+\frac{\gamma}{2}\,\|B_tz\|^2,\] where $\gamma:=1-q+\eps_0>0.$
\item We have a local Lipschitz condition in $z$, i.e. for all $t \in [0,T]$ and $z_1,z_2\in\mb{R}^d$,
\begin{align*}|F(t,z_1)-F(t,z_2)| 
\leq \max\!\left(\frac{1-q}{2},|q|\right)\!\Bigl(\|B_t\lambda_t\| +\|B_tz_1\|+\|B_tz_2\|\Bigr)\|B_t(z_1-z_2)\|.
\end{align*}
\item The driver $F$ is convex in $z$ for all $t \in [0,T]$. More precisely, its Hessian with respect to $z$ is given by $D^2_zF(t,z)=(1-q)\,B^{\tr}_tB_t$, a positive semidefinite matrix.
\end{enumerate}
\end{lem}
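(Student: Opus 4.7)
The strategy is to expand the driver into a manageable quadratic polynomial and then read off each of the four claims from elementary inequalities. Concretely, I would first rewrite
\[
F(t,z)=-\frac{q}{2}\bigl\|B_t(z+\lambda_t)\bigr\|^2+\frac{1}{2}\|B_tz\|^2=\frac{1-q}{2}\|B_tz\|^2-q\,\langle B_tz,B_t\lambda_t\rangle-\frac{q}{2}\|B_t\lambda_t\|^2,
\]
which is a quadratic polynomial in $z$ with coefficients predictable in $(t,\omega)$. From this expression item (i) is immediate, and item (iv) reduces to reading off the Hessian $D_z^2F(t,z)=(1-q)B_t^{\tr}B_t$, which is positive semidefinite since $1-q>0$ (equivalently $p<1$) and $B_t^{\tr}B_t$ is p.s.d.

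For item (ii) I would treat the cross term $-q\,\langle B_tz,B_t\lambda_t\rangle$ via the weighted Young inequality with parameter $\eps_0>0$, namely $|q|\,|\langle B_tz,B_t\lambda_t\rangle|\le \tfrac{\eps_0}{2}\|B_tz\|^2+\tfrac{q^2}{2\eps_0}\|B_t\lambda_t\|^2$, and collect terms. This already produces $\gamma=1-q+\eps_0$ as the coefficient of $\tfrac12\|B_tz\|^2$ in the regime $q<0$ and yields the coefficient $\tfrac{q(q-\eps_0)}{\eps_0}$ in front of $\tfrac12\|B_t\lambda_t\|^2$. For $q\in(0,1)$ the same estimate works but is not sharp, and a slightly different split—bounding $F$ directly by $\tfrac12\|B_tz\|^2-\tfrac{q}{2}\|B_t(z+\lambda_t)\|^2$ and applying $\|B_t(z+\lambda_t)\|^2\le(1+\delta)\|B_tz\|^2+(1+1/\delta)\|B_t\lambda_t\|^2$ with $\delta$ chosen to give the same $\gamma$—produces the alternative coefficient $\tfrac{q}{1-q}$. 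Taking the maximum of the two gives a bound valid on both sides of $0$, which is the form stated in the lemma.

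For item (iii), I would use the polarization identity $\|u\|^2-\|v\|^2=\langle u-v,u+v\rangle$ twice. A short calculation gives the exact identity
\[
F(t,z_1)-F(t,z_2)=\Big\langle B_t(z_1-z_2),\ \tfrac{1-q}{2}B_t(z_1+z_2)-qB_t\lambda_t\Big\rangle,
\]
and then Cauchy--Schwarz together with $\|B_t(z_1+z_2)\|\le \|B_tz_1\|+\|B_tz_2\|$ yields the desired local Lipschitz bound, with coefficient $\max\bigl(\tfrac{1-q}{2},|q|\bigr)$ obtained by factoring out the worst of the two constants in the two summands. The whole lemma thus amounts to routine bookkeeping; the only mildly delicate step is arranging the Young parameter in (ii) so that both sign regimes of $q$ fall under the single statement involving $\max$.
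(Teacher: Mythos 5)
Your proof is correct; note that the paper explicitly leaves the proof of this lemma to the reader, so there is no official argument to compare against, and the expansion $F(t,z)=\frac{1-q}{2}\|B_tz\|^2-q\langle B_tz,B_t\lambda_t\rangle-\frac{q}{2}\|B_t\lambda_t\|^2$ together with Young's inequality, polarization and Cauchy--Schwarz is exactly the routine verification intended. The one point worth tightening: for $q\in(0,1)$ the weighted Young estimate does \emph{not} by itself yield the stated constant on the lower bound (it produces $\frac{q(q+\eps_0)}{\eps_0}$, which can exceed the stated maximum), so the second split you describe — equivalently, minimizing the convex quadratic in $z$ to get $F\geq-\frac{q}{2(1-q)}\|B_t\lambda_t\|^2$ — is genuinely needed there and correctly supplies the alternative coefficient $\frac{q}{1-q}$; combined with the fact that $F\geq0$ when $q\leq0$, your case analysis closes the argument.
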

Then, recalling Assumption \ref{ass_expmom} on the exponential moments of $\int_0^T\lambda_t^\tr\,d\lo M,M\ro_t\lambda_t$, we find \cite{MW10} Assumption 2.2 verified. The following theorem collects together \cite{MW10} Theorems 2.4, 2.5 and Corollary 4.2 (ii).
\begin{thm}\label{thmMW10} Suppose Assumption \ref{ass_expmom} holds.
\begin{enumerate}
\item There exists a solution $(\Psi,Z,N)$ to the BSDE \eqref{BSDEF} with $\Psi\in\mathfrak{E}$ and $Z\cdot M$ and $N$ two square-integrable martingales. 
\item If $(\Psi,Z,N)$ solves the BSDE \eqref{BSDEF} with $\Psi\in\mathfrak{E}$ then $Z\cdot M$ and $N$ are two square-integrable martingales.
\item If $(\Psi,Z,N)$ and $(\Psi',Z',N')$ both solve the BSDE \eqref{BSDEF} with $\Psi,\Psi'\in\mathfrak{E}$ then $\Psi$ and $\Psi'$, $Z\cdot M$ and $Z'\cdot M$ as well as $N$ and $N'$ are indistinguishable.
\end{enumerate}
\end{thm}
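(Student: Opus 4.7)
The plan is to verify the hypotheses of the existence/uniqueness framework of \cite{MW10} for our specific driver and then execute the three-step program there, tailored to the quadratic structure of \eqref{BSDEF-}. By Lemma \ref{PropF}, the driver $F(t,z)=-\tfrac{q}{2}\|B_t(z+\lambda_t)\|^2+\tfrac{1}{2}\|B_tz\|^2$ is continuous in $z$, satisfies the quadratic growth $|F(t,z)|\le \alpha_t+\tfrac{\gamma}{2}\|B_tz\|^2$ with $\gamma=1-q+\eps_0>0$ and with $\alpha\cdot A$ having all exponential moments under Assumption \ref{ass_expmom}, a uniform local-Lipschitz bound, and is convex in $z$. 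Together with the boundedness of $A$ (by $\pi/2$) these match the hypotheses of \cite{MW10} Assumption~2.2.

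For item (i) I would use a truncation--approximation scheme. Replace $F$ by a sequence $F^n$ obtained by truncating the quadratic term in $z$ beyond radius $n$; these drivers are Lipschitz in $z$ and preserve the convexity and quadratic upper/lower bounds. Classical Lipschitz BSDE theory in the continuous semimartingale setting (extending Kobylanski/El Karoui--Huang) provides solutions $(\Psi^n,Z^n,N^n)$. The crucial a priori estimate comes from applying It\^o's formula to $\exp(\gamma\Psi^n)$: the quadratic $Z^n$-term is absorbed by the upper bound on $F^n$, and the $-\tfrac12 d\lo N,N\ro$-term is eliminated by the same exponential transform. After localization one obtains
\[
|\Psi^n_t|\le \tfrac{1}{\gamma}\log\E\!\left[\exp\!\Big(\gamma\!\int_t^T \alpha_s\,dA_s\Big)\,\Big|\,\mc{F}_t\right]+\text{const},
\]
which by Assumption \ref{ass_expmom} gives a uniform bound of $\Psi^n$ in $\mathfrak{E}$. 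Passing to the limit relies on the monotone stability for convex quadratic BSDEs in the spirit of Briand--Hu and Delbaen--Hu--Richou, adapted to the semimartingale setting as in \cite{MW10}; the limit $(\Psi,Z,N)$ inherits $\Psi\in\mathfrak{E}$, and the square-integrability of $Z\cdot M$ and $N$ then follows from item~(ii).

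For item (ii), given a solution $(\Psi,Z,N)$ with $\Psi\in\mathfrak{E}$, I would apply It\^o's formula to $\Psi^2$ between $0$ and a localizing stopping time $\tau_n\uparrow T$. Using $\Psi_T=0$, rearrangement yields
\[
\int_0^{\tau_n}\!Z_t^{\tr}\,d\lo M,M\ro_t Z_t + \lo N,N\ro_{\tau_n}
= \Psi_0^2 - \Psi_{\tau_n}^2 + 2\!\int_0^{\tau_n}\!\Psi_t\,F(t,\Psi_t,Z_t)\,dA_t + \text{martingale terms} + \int_0^{\tau_n}\!\Psi_t\,d\lo N,N\ro_t.
\]
Taking expectations, bounding $|F|$ by $\alpha+\tfrac{\gamma}{2}\|Bz\|^2$, absorbing the resulting $\gamma\!\int\!\Psi\,\|BZ\|^2\,dA$ term into the left-hand side on $\{\Psi^*\le 1/\gamma\}$ via a suitable exponential weight, and using Cauchy--Schwarz together with $\Psi^*\in\mathfrak{E}$ and the exponential moments of $\int\alpha\,dA$, gives a uniform bound independent of $n$. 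Monotone convergence as $n\to\infty$ then yields $\E\!\left[\int_0^T Z_t^{\tr}\,d\lo M,M\ro_t Z_t+\lo N,N\ro_T\right]<+\infty$.

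For item (iii), I would exploit the convexity of $F$ in $z$ via the $\theta$-technique of Briand--Hu. For $\theta\in(0,1)$ set $U^\theta:=\Psi-\theta\Psi'$ and $V^\theta:=Z-\theta Z'$, derive the BSDE satisfied by $U^\theta$, and use the convexity estimate $F(t,Z_t)-\theta F(t,Z_t')\le (1-\theta)\,G_t+\beta_t\|B_tV^\theta_t\|^2$ for a suitable $G$ and process $\beta$ controlled in the $\mathfrak{E}$-norms of both solutions. An exponential linearization (working with $\exp(\mu U^\theta)$) removes the quadratic $V^\theta$-term and the $-\tfrac12 d\lo N,N\ro$-term, reducing matters to a linear BSDE comparison. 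Passing $\theta\uparrow 1$ gives $\Psi\le\Psi'$; symmetry yields indistinguishability of the $\Psi$-components, and the identity in quadratic variation of the martingale parts then forces $Z\cdot M\equiv Z'\cdot M$ and $N\equiv N'$. The main obstacle is twofold: controlling the quadratic $Z$-term in the a priori estimate of (i) (and in the $\theta$-comparison of (iii)) requires the exponential moments of $\int_0^T\!\lambda^\tr\,d\lo M,M\ro\lambda$ rather than $L^p$-integrability, and the orthogonal component $N$, which enters through the nonlinear term $-\tfrac12 d\lo N,N\ro$, must be handled by the exponential change of variables so that all arguments reduce to a (transformed) quadratic BSDE in $(Z,N)$ with a driver amenable to the convex stability machinery of \cite{MW10}.
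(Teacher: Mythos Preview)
The paper gives no proof of this theorem: the sentence preceding it states that the result ``collects together \cite{MW10} Theorems 2.4, 2.5 and Corollary 4.2 (ii)'', and no argument follows. Your proposal is therefore to be measured against the cited reference rather than against anything in the present paper, and in outline it does faithfully reconstruct the strategy used there --- truncation plus exponential a~priori bounds and monotone stability for (i), an energy inequality for (ii), and the Briand--Hu $\theta$-technique under convexity for (iii).

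The one place where your sketch does not close as written is item (ii). Applying It\^o to $\Psi^2$ leaves the terms $\gamma\int_0^{\tau_n}|\Psi_t|\,\|B_tZ_t\|^2\,dA_t$ and $\int_0^{\tau_n}|\Psi_t|\,d\lo N,N\ro_t$ on the right-hand side, and since $\Psi\in\mathfrak{E}$ is in general unbounded these cannot be absorbed into the left; restricting to the event $\{\Psi^*\le 1/\gamma\}$ does not help, as that event carries no uniform lower probability bound. The standard remedy (and the one implemented in \cite{MW10}) is to apply It\^o not to $\Psi^2$ but to a convex test function $\phi(\Psi)$ satisfying $\phi''\ge c+\gamma_0|\phi'|$ for some $c>0$ and $\gamma_0\ge\max(\gamma,1)$, for instance $\phi(x)=\bigl(e^{2\gamma_0|x|}-2\gamma_0|x|-1\bigr)/(4\gamma_0^2)$. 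With this choice both the $\|B_tZ_t\|^2\,dA_t$ and the $d\lo N,N\ro_t$ contributions appear with a strictly positive coefficient on the left, and the remaining right-hand side is controlled by exponential moments of $\Psi^*$ and of $\int_0^T\alpha_t\,dA_t$, which is exactly what $\mathfrak{E}$ and Assumption~\ref{ass_expmom} supply. This is presumably what you gesture at with ``a suitable exponential weight'', but the raw $\Psi^2$-computation you wrote down does not go through.
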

\bibliography{BSDEsUtMaxBMO}
\bibliographystyle{abbrv}
\end{document}